\newtheorem{assumption}{Assumption}
\newcommand{\cf}{\emph{cf.}\xspace}
\newcommand{\bdmath}{\begin{dmath}}
\newcommand{\edmath}{\end{dmath}}
\newcommand{\beq}{\begin{equation}}
\newcommand{\eeq}{\end{equation}}
\newcommand{\bdm}{\begin{displaymath}}
\newcommand{\edm}{\end{displaymath}}
\newcommand{\bea}{\begin{eqnarray}}
\newcommand{\eea}{\end{eqnarray}}
\newcommand{\beal}{\beq \begin{array}{ll}}
\newcommand{\eeal}{\end{array} \eeq}
\newcommand{\beas}{\begin{eqnarray*}}
\newcommand{\eeas}{\end{eqnarray*}}
\newcommand{\ba}{\begin{array}}
\newcommand{\ea}{\end{array}}
\newcommand{\bit}{\begin{itemize}}
\newcommand{\eit}{\end{itemize}}
\newcommand{\ben}{\begin{enumerate}}
\newcommand{\een}{\end{enumerate}}
\newcommand{\calC}{{\cal C}}
\newcommand{\calL}{{\cal L}}
\newcommand{\calS}{{\cal S}}
\newcommand{\calT}{{\cal T}}
\newcommand{\eg}{\emph{e.g.,}\xspace}
\newcommand{\ie}{\emph{i.e.,}\xspace}
\newcommand{\hide}[1]{}
\newcommand{\wrt}{w.r.t.\xspace}
\newcommand{\hiddenText}{{\color{gray} hidden text.}}
\newcommand{\hideWithText}[1]{\hiddenText}
\newcommand{\subject}{\text{ subject to }}
\DeclareMathOperator*{\argmax}{arg\,max}
\DeclareMathOperator*{\argmin}{arg\,min}
\newcommand{\tran}{^{\mathsf{T}}}
\newcommand{\Real}[1]{ { {\mathbb R}^{#1} } }
\newcommand{\bmat}{\left[ \begin{array}}
\newcommand{\emat}{\end{array}\right]}
\newcommand{\blue}[1]{{\color{blue}#1}}
\newcommand{\linkToPdf}[1]{\href{#1}{\blue{(pdf)}}}
\newcommand{\linkToPpt}[1]{\href{#1}{\blue{(ppt)}}}
\newcommand{\linkToCode}[1]{\href{#1}{\blue{(code)}}}
\newcommand{\linkToWeb}[1]{\href{#1}{\blue{(web)}}}
\newcommand{\linkToVideo}[1]{\href{#1}{\blue{(video)}}}
\newcommand{\linkToMedia}[1]{\href{#1}{\blue{(media)}}}
\newcommand{\award}[1]{\xspace} 
\newcommand{\xdot}{\dot{x}}
\newcommand{\thetadot}{\dot{\theta}}
\newcommand{\parentheses}[1]{\left(#1 \right)}
\newcommand{\bbN}{\mathbb{N}}
\newcommand{\bbZ}{\mathbb{Z}}
\newcommand{\controlset}{\mathbb{U}}
\newcommand{\xbr}{x_{\mathrm{B}}}
\newcommand{\xur}{x_{\mathrm{U}}}
\newcommand{\smoothregion}{\mathbf{O}}
\newcommand{\nonsmoothregion}{\mathbf{\Gamma}}
\newcommand{\clip}{\mathrm{clip}}
\newcommand{\Jnn}{J_{\mathrm{NN}}}
\newcommand{\lqr}{\mathrm{LQR}}
\newcommand{\supervise}{\mathrm{S}}
\newcommand{\hjb}{\mathrm{HJB}}
\newcommand{\Jvalue}{\mathrm{V}}
\newcommand{\smooth}{\mathrm{smooth}}
\newcommand{\pmp}{\mathrm{PMP}}
\newcommand{\local}{\mathcal{B}}
\title[]{On the Nonsmooth Geometry and Neural Approximation of \\ the Optimal Value Function of Infinite-Horizon Pendulum Swing-up}
\author{%
 \Name{Haoyu Han} \Email{hyhan@seas.harvard.edu}\\
 \addr School of Engineering and Applied Sciences, Harvard University%
 \AND
 \Name{Heng Yang} \Email{hankyang@seas.harvard.edu}\\
 \addr School of Engineering and Applied Sciences, Harvard University%
}
\begin{document}

\maketitle


\vspace{-6mm}

\begin{abstract}%
    We revisit the inverted pendulum problem with the goal of understanding and computing the true optimal value function. We start with an observation that the true optimal value function must be nonsmooth (\ie not globally $C^1$) due to the symmetry of the problem. We then give a result that can certify the optimality of a candidate \emph{piece-wise} $C^1$ value function. Further, for a candidate value function obtained via numerical approximation, we provide a bound of suboptimality based on its Hamilton-Jacobi-Bellman (HJB) equation residuals. Inspired by~\cite{holzhuter04automatica-optimal}, we then design an algorithm that solves backward the Pontryagin's minimum principle (PMP) ODE from terminal conditions provided by the locally optimal LQR value function. This numerical procedure leads to a piece-wise $C^1$ value function whose nonsmooth region contains periodic \emph{spiral lines} and smooth regions attain HJB residuals about $10^{-4}$, hence certified to be the optimal value function up to minor numerical inaccuracies. This optimal value function checks the power of optimality: (i) it sits above a polynomial lower bound; (ii) its induced controller globally swings up and stabilizes the pendulum, and (iii) attains lower trajectory cost than baseline methods such as energy shaping, model predictive control (MPC), and proximal policy optimization (with MPC attaining almost the same cost). We conclude by distilling the optimal value function into a simple neural network. Our code is avilable in https://github.com/ComputationalRobotics/InvertedPendulumOptimalValue.
\end{abstract}
\begin{keywords}%
Optimal Control, Inverted Pendulum, Pontryagin's Minimum Principle 
\end{keywords}


\section{Introduction}
\label{sec:introduction}

Inverted pendulum is arguably one of the most fundamental problems in nonlinear (optimal) control.\begin{wrapfigure}{r}{0.25\textwidth}
	\includegraphics[width = 0.25\textwidth]{./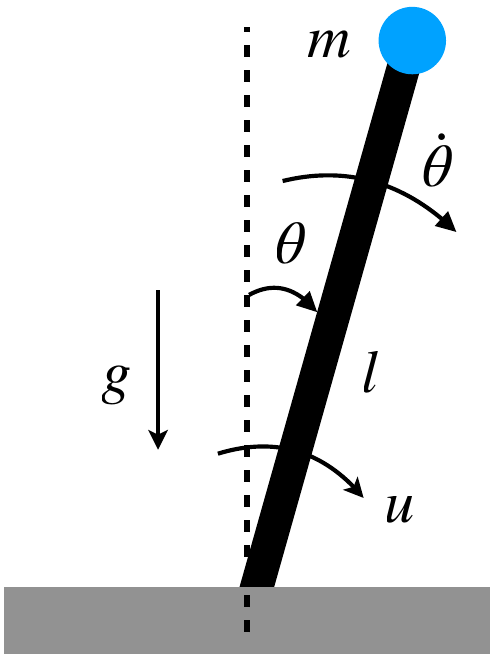}
	\vspace{-9mm}
	\caption{Pendulum.\label{fig:pendulum}}
	\vspace{-4mm}
\end{wrapfigure} 
It has been frequently used in textbooks~\citep{sontag13book-mathematical,slotine91book-applied,tedrake09book-underactuated, Khalil02book-nonlinear} to illustrate foundational concepts such as feedback linearization, Lyapunov stability, proportional-integral-derivative (PID) control, energy shaping, to name a few. More recently, inverted pendulum is also one of the most basic benchmark problems for reinforcement learning, \eg in the Deepmind control suite~\citep{tassa18-deepmind}. Not only is the inverted pendulum a theoretically interesting problem to study, it also relates to practical applications in model-based humanoid control~\citep{feng14-optimization,sugihara02-real}.

One can often consider the inverted pendulum as a solved nonlinear control problem because in the model-based paradigm there exists elegant solutions such as energy pumping plus local linear-quadratic-regulator (LQR) stabilization~\citep{aastrom00automatic-swinging,muskinja06tie-swinging}; and in the model-free paradigm algorithms such as proximal policy optimization (PPO) and actor critic work very well~\citep{raffin21jmlr-stablebaseline,ren23arxiv-stochastic}. However, from the perspective of \emph{optimal control}, we know very little about the true optimal value function (or cost-to-go) and its associated optimal controller.
This leads to the side effect that we cannot evaluate the suboptimality of other (approximately optimal) controllers. Let us state the continuous-time infinite-horizon (undiscounted) pendulum swing-up problem to understand why it is challenging to compute the optimal controller and value function.

{\bf Problem Setup}. 
We are given the continuous-time pendulum dynamics as shown in Fig.~\ref{fig:pendulum}
\bea \label{eq:pendulum-dynamics}
x := \begin{bmatrix}
	\theta \\ \thetadot
\end{bmatrix}, 
\quad 
\xdot(t)  = f(x(t),u(t)) := 
\begin{bmatrix}
\thetadot  \\
-\frac{1}{ml^{2}} \left( b \thetadot - m g l \sin \theta - u \right)
\end{bmatrix},
\eea
where $\theta$ is the angular position, $\thetadot$ is the angular velocity, $m$ is the point mass, $l$ is the length of the pole, $b$ is the damping coefficient, $g$ is the gravity constant, and $u$ is the torque. Our goal is to swing up and stabilize the pendulum from any initial state $x_0$ to the upright position $\xur = [2k \pi,0]\tran, \forall k \in \bbZ$, an unstable equilibrium point. We formulate the undiscounted optimal control problem
\begin{equation}\label{pro:optimal-control-problem} 
J^*(x_0) = \min _{u(t)} \displaystyle \int ^{+\infty }_{0}c(x(t),u(t)) dt, \quad \subject \ \ x(0) = x_0, \ \ u(t) \in \controlset, \ \ \text{and }~\eqref{eq:pendulum-dynamics}
\end{equation}
where the cost function $c(x,u)$ is defined as
\bea\label{eq:cost-function}
c( x, u ) = q_1 \sin^2 \theta + q_1 (\cos \theta -1 )^2 + q_2 \thetadot^2+  r u^2,
\eea
with $q_1,q_2,r > 0$. We let $\controlset$ in~\eqref{pro:optimal-control-problem} be either $\Real{}$ (without control saturation) or $[-u_\max, u_\max]$ with $u_\max < mgl$ (with control saturation). Note that we use ``$\sin \theta$'' and ``$\cos \theta$'', instead of $\theta$, in the cost function~\eqref{eq:cost-function} to avoid the modulo $2\pi$ issue. It is not difficult to observe that $J^*(\xur) = 0$ and $J^*(x_0)$ is positive definite because swinging up from any $x_0$ that is not $\xur$ would incur a strictly positive total cost. Problem~\eqref{pro:optimal-control-problem} is a nonlinear quadratic regulator problem~\citep{wernli75automatica-suboptimal}.

We make an assumption about the set of admissible control trajectories.

\begin{assumption}[Admissible Control]\label{assum:admissible-control}
In problem~\eqref{pro:optimal-control-problem}, the control sequence $u(t)$ is admissible if (i) $u(t)$ is piece-wise continuous, and (ii) $x(t) \rightarrow \xur$ under $u(t)$ when $t \rightarrow +\infty$.
\end{assumption}

Intuitively, condition (ii) in Assumption~\ref{assum:admissible-control} allows us to only consider the set of controllers that asymptotically stabilize the pendulum at $\xur$. When $b$ is not too large, energy shaping followed by local LQR is such an admissible controller (hence the admissible control set is nonempty).

\vspace{-3mm}
\subsection{Related Work}
\vspace{-1mm}

{\bf Dynamic Programming}. A straightforward approach for solving~\eqref{pro:optimal-control-problem} is to discretize the dynamics~\eqref{eq:pendulum-dynamics} and perform value iteration with barycentric interpolation~\citep{Munos1998neuIPS-Barycentric}. Not only will this approach suffer from the curse of dimensionality, it is also unclear whether it will converge in the undiscounted case, as shown in~\cite[Example 2.3]{yang23book-optimal}. 

{\bf Hamilton-Jacobi-Bellman (HJB) Equation}. The HJB theorem~\citep[Theorem 7.1]{tedrake09book-underactuated}~\citep{kamalapurkar2018book-sufficienttheorem} states that if one can find a $C^1$ function $J(x)$ such that $J(\xur)=0$, $J(x)$ is positive definite and satisfies the HJB equation
\bea\label{eq:HJB}
\min_{u \in \controlset} {c(x,u) +  \frac{\partial J}{\partial x}\tran f(x,u) = 0}, \quad \forall x
\eea
then $J(x)$ is the optimal value function. Obtaining an analytic solution to~\eqref{eq:HJB} is often impossible, hence numerical approximations are needed. The levelset algorithm~\citep{mitchell2005-levelset,osher1988jcp-levelset,osher2001jcp-levelset} is a popular method to solve Hamilton-Jacobi (HJ)-type equations, in particular those appearing in reachability problems~\citep{bansal17cdc-hamilton}. Nevertheless, to the best of our knowledge, it is not yet applicable to the pendulum problem because~\eqref{eq:HJB} cannot be transformed into an HJB equation that has a time derivative and terminal condition. A fundamental problem of the HJB equation~\eqref{eq:HJB} is that it implicitly assumes the optimal value function is $C^1$, which is not true for the pendulum problem, as we will show in Theorem~\ref{the:non-smooth}. One can consider the notion of a \emph{viscosity solution}~\citep{bardi97book-optimal} to avoid this issue, but it does not make the computation any easier. A family of finite-element methods~\citep{jensen13sina-convergence,smears14sina-discontinuous,kawecki22fcm-convergence} considers the stochastic optimal control problem where \eqref{eq:HJB} becomes an elliptic PDE. However, they do not consider the infinite-horizon case where a boundary condition is unavailable.

{\bf Pontryagin's Minimum Principle (PMP)}. Another classical result in optimal control is PMP (to be reviewed in Lemma~\ref{lemma:PMP})~\citep{bertsekas12book-dp}, which states the optimal state-control trajectory must satisfy an ODE (but trajectories satisfying the ODE may not be optimal).
\citep{holzhuter04automatica-optimal,hauser2001acc-geometry} uses the local LQR value function of the pendulum to provide boundary conditions for PMP and computes a value function that swings up the pendulum. However, they only considered the case of no control saturation and did not prove optimality of the value function.

{\bf Weak Solution}. Due to the difficulty of computing and certifying the optimal value function, \cite{Lasserre2007SIAM-lowerbound,lasserre05cdc-nonlinear} developed a general framework of using convex relaxations to compute smooth \emph{weak solutions} of the HJB~\eqref{eq:HJB}~\citep{vinter93sicopt-convex}. \cite{Yang23ral-lowerbound} recently applied this method to compute polynomial lower bounds of the optimal value function. However, because the true optimal value function is nonsmooth, polynomial approximation is not expected to capture the detailed geometry of the optimal value function, as we will show in Fig.~\ref{fig:compare}. 


{\bf Neural Approximation}. In addition to the aforementioned classical methods, using neural networks to approximate the optimal value function becomes increasingly popular~\citep{lutter2020crl-neural-approximation,shilova2023-neural-approximation}. 
\citep{doya2000neuralcomp-RL-HJBresidual,munos1999-HJBresidual} first introduced HJB residual, \ie violation of~\eqref{eq:HJB}, as a loss to train neural networks~\citep{raissi19jcp-physics}, followed by~\cite{tassa2007tonn-neural-approximation} showing how to avoid local minima, and~\cite{liu2014toc-neural-approximation} showing how to make it robust to dynamic disturbance. However, the problem remains that only using HJB loss may lead to multiple solutions. Another line of work uses PMP to generate data for training~\citep{nakamura2021siam-PMP-plus-NN}, but it requires solving a boundary value problem which may also have multiple solutions. In general, neural approximation also faces the same difficulty that the optimal value function may be nonsmooth, and it remains difficult to evaluate its suboptimality.

\vspace{-3mm}
\subsection{Contributions}
\vspace{-1mm}


We start with an observation (Theorem~\ref{the:non-smooth}) that the optimal value function $J^*(x)$ of~\eqref{pro:optimal-control-problem} must be nonsmooth at the bottomright position due to symmetry of the problem, and hence the HJB equation~\eqref{eq:HJB} cannot be satisfied everywhere in the state space. In such cases, little is known about $J^*(x)$ except that it is the so-called \emph{viscosity solution} of the HJB~\citep{bardi97book-optimal}, which is difficult to interpret for practitioners. We contribute a result that is easy to interpret (Theorem~\ref{the:optimality}), using elementary proof, that can certify the optimality of a given candidate \emph{piece-wise $C^1$} function. For numerically computed approximately optimal value functions, we give a result (Theorem~\ref{the:suboptimality}) that certifies the \emph{suboptimality} of the numerical solution \wrt the true optimal value function.

We then develop a numerical approach that, for the first time, computes the true optimal value function of pendulum swing-up, up to minor numerical inaccuracies. Our algorithm is inspired by the algorithm of~\cite{holzhuter04automatica-optimal} and is based on PMP with boundary conditions provided by local LQR, but it makes several improvements. For example, we handle the case with control saturation, we uncover a nonsmooth curve in the optimal value function, and we can bound the suboptimality of our solution using Theorem~\ref{the:suboptimality}. We then showcase the power of optimality. (a) The controller induced from the optimal value function swings up and stabilizes the pendulum from any initial state. (b) The induced controller achieves \emph{lower} cost than existing controllers such as energy pumping, reinforcement learning, and model predictive control (MPC), with the MPC controller being the best baseline as it achieves almost the same cost as our controller. (c) The optimal value function indeed sits above the polynomial lower bound obtained from convex relaxations.

Our numerical algorithm is expensive as it requires solving a large amount of PMP trajectories, computing intersections, and storing dense samples of the optimal value function. We therefore ask if we can use a neural network to \emph{distill} and \emph{compress} the optimal value function. In the supervised case, we show that we just need $50$ optimal value samples to train a simple neural network whose induced controller can globally swing up the pendulum. In the weakly supervised case, we design a novel loss function to train a neural network directly from \emph{raw PMP trajectories}, and the resulting controller still globally swings up the pendulum. This simple training scheme generalizes to the more challenging cart-pole problem, where we also obtain a global stabilizing controller.

{\bf Limitations}. Unfortunately, there are still puzzles related to the true optimal value function (in our opinion, due to the limitations of fundamental theoretical tools in optimal control). In the case with control saturation, we observe and conjecture that the optimal value function is \emph{discontinuous}. Although we cannot formally prove our conjecture, we provide numerical evidence based on the limiting discounted viscosity solution idea in~\cite{bardi97book-optimal}. 




\section{Certificate of (Sub-)Optimality for the Nonsmooth Value Function}

We start with an observation that the optimal value function $J^*(x)$ of~\eqref{pro:optimal-control-problem} must be nonsmooth.

\begin{theorem}[Nonsmooth Optimal Value Function]\label{the:non-smooth}
    The optimal value function $J(x)$ to problem \eqref{pro:optimal-control-problem} is not $C^1$ at the bottomright position $\xbr := [\pi + 2k\pi,0]\tran,\forall k \in \bbZ$.
\end{theorem}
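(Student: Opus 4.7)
The plan is to argue by contradiction. Assume $J^*$ is differentiable at $\xbr = [\pi,0]\tran$; the cases $\xbr = [(2k+1)\pi, 0]\tran$ for general $k \in \bbZ$ follow from $2\pi$-periodicity of the problem. The core idea is that problem~\eqref{pro:optimal-control-problem} possesses a reflection symmetry that forces $\nabla J^*(\xbr) = 0$, which then conflicts with the HJB equation because $c(\xbr,\cdot) > 0$. Concretely, I would first show that the involution $(\theta,\thetadot,u) \mapsto (2\pi - \theta, -\thetadot, -u)$ is a symmetry of the optimal control problem: a direct substitution (using $\sin(2\pi - \theta) = -\sin\theta$ and $\cos(2\pi - \theta) = \cos\theta$) verifies that the dynamics~\eqref{eq:pendulum-dynamics} and the running cost~\eqref{eq:cost-function} are both invariant, and the set of upright equilibria $\{[2k\pi, 0]\tran : k \in \bbZ\}$ is mapped to itself. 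Hence the reflection of any admissible trajectory (in the sense of Assumption~\ref{assum:admissible-control}) is admissible from the reflected initial condition with identical total cost, yielding the identity $J^*(\theta, \thetadot) = J^*(2\pi - \theta, -\thetadot)$.

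Differentiating this identity at the fixed point $(\pi, 0)$ of the involution gives $\partial_\theta J^*(\pi, 0) = -\partial_\theta J^*(\pi, 0)$ (and analogously for $\thetadot$), so $\nabla J^*(\xbr) = 0$. To close the argument I would invoke the HJB equation~\eqref{eq:HJB} at $\xbr$: substituting $\nabla J^*(\xbr) = 0$ collapses it to $\min_{u \in \controlset} c(\xbr, u) = 0$. But evaluating the cost directly gives $c(\xbr, u) = q_1 \cdot 0 + q_1 (-1-1)^2 + q_2 \cdot 0 + r u^2 = 4q_1 + r u^2 \geq 4 q_1 > 0$ since $q_1 > 0$, a contradiction.

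The main obstacle I anticipate is justifying that the pointwise HJB actually holds at $\xbr$ from mere differentiability of $J^*$ at that single point. If one strengthens the hypothesis to ``$J^*$ is $C^1$ in a neighborhood of $\xbr$'' (the usual reading of ``$C^1$ at a point''), the HJB equality at $\xbr$ is immediate from the dynamic programming derivation. If one only has differentiability at the isolated point, the standard fix is to invoke the viscosity solution theory, namely that every point of differentiability of a continuous viscosity solution is a point of classical satisfaction of~\eqref{eq:HJB}. Either way, the contradiction rules out $C^1$-smoothness at $\xbr$. A secondary technical point is checking admissibility of the reflected trajectory under Assumption~\ref{assum:admissible-control}, which reduces to noting that the reflection maps an asymptotic approach to $[2k\pi, 0]\tran$ into an asymptotic approach to $[2(1-k)\pi, 0]\tran$, still in the set of upright equilibria.
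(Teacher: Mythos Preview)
Your proposal is correct and shares the paper's core strategy: exploit the reflection symmetry of the problem to contradict the HJB equation at $\xbr$, with the same numerical obstruction $4q_1$ appearing at the end. The packaging differs, however. The paper argues at the level of optimal \emph{controls}: assuming $J^*$ is $C^1$ near $\xbr$, the HJB minimizer is unique by strong convexity in $u$; the symmetry produces two optimal controls $u(0)$ and $-u(0)$ at $\xbr$, which forces $u(0)=0$; but then $F(0)=c(\xbr,0)+\nabla J^*(\xbr)\tran f(\xbr,0)=4q_1$ (using that $\xbr$ is an equilibrium, $f(\xbr,0)=0$) violates $\min_u F(u)=0$. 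You instead push the symmetry directly onto the \emph{value function}, obtaining $\nabla J^*(\xbr)=0$ by differentiating the even identity at its fixed point, after which HJB collapses to $\min_u c(\xbr,u)=0$, contradicted by $c(\xbr,u)\geq 4q_1$. Your route is slightly more economical because it never needs to invoke the existence of an optimal trajectory or the identification of the HJB argmin with the actual optimal control; the paper's route, on the other hand, makes the physical picture (two symmetric swing-up directions) more explicit. Both are valid and ultimately equivalent.
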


The proof of Theorem~\ref{the:non-smooth} is given in Appendix~\ref{sec:app:proof-nonsmoothnes}. Here we provide a brief explanation.
If $J(x)$ were smooth at $\xbr$, then it must satisfy the HJB equation~\eqref{eq:HJB}, implying the optimal controller at $\xbr$ must be unique due to strong convexity of the cost~\eqref{eq:cost-function}. However, our physics insight tells us swinging up the pendulum from the left side is equivalent to swinging up from the right side (achieving the same cost), leading to two symmetric optimal controllers, thus a contradiction.

\vspace{-3mm}
\subsection{Certificate of Optimality}
\vspace{-1mm}

We then state a result that verifies the optimality of a candidate piece-wise $C^1$ value function for~\eqref{pro:optimal-control-problem}.

\begin{theorem}[Optimality Certificate of A Piece-wise $C^1$ Value Function] \label{the:optimality}
    Let $\smoothregion_{-N},\dots,\smoothregion_N$ be open subsets of $\Real{2}$ that satisfy
    \begin{enumerate}[label=(\roman*)]
        \item $\cup^N_{i=-N}\smoothregion_i = \Real{2}$,
        \item $\forall i$, $\smoothregion_i \cap \smoothregion_{i+j} \neq  \emptyset$ if $j = \pm 1$, and $\smoothregion_i \cap \smoothregion_{i+j} =  \emptyset$ if $|j| > 1$,
    \end{enumerate}
    and $J_{-N},\dots,J_N(x)$ be $C^1$ functions defined on them, respectively ($N$ possibly infinite).
    Define 
    $$
    J(x) = \min_i\{J_i(x)|x\in \smoothregion_i\}.
    $$ 
    If $J(x)$, $\smoothregion_i$'s, and $J_i(x)$'s are such that
    \begin{enumerate}[label=(\roman*)]
        \setcounter{enumi}{2}
        \item $J(x)$ is continuous and piece-wise $C^1$ on $\Real{2}$,
        \item $J(\xur) = 0$ where $\xur = [2k\pi,0]\tran,\forall k \in \bbZ$ is the upright position,
        \item $\forall i$, $J_i(x)$ satisfies the HJB equation~\eqref{eq:HJB} everywhere on $\smoothregion_i$,
        \item\label{cond:monotonic} the nonsmooth curve $\nonsmoothregion:= \{x\in \Real{2} | \exists(i,j) \text{ s.t. } J_i(x) = J_j(x)\}$ can be locally defined by $\{x|G(x) = 0\}$ with $G$ a $C^1$ function, and every admissible trajectory $x(t)$ satisfies $G(x(t))$ is monotonic in $t$ near an intersection point $x(t_0)$ where $G(x(t_0)) = 0$,
        \item $\forall x_0 \in \Real{2}$, there exists a trajectory $(x(t),u(t))$ starting from $x_0$ that attains cost $J(x_0)$,
    \end{enumerate}
    then $J(x)$ is the optimal value function of~\eqref{pro:optimal-control-problem}.\footnote{If $J(x)$ is discontinuous, we require admissible trajectories to not cross the discontinuous region to attain lower costs. See details in Appendix~\ref{app:sec:bang-bang-discontinuous}.}
    \end{theorem}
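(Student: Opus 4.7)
The plan is to establish the two-sided inequality $J(x_0) = J^*(x_0)$. One direction, $J^*(x_0) \leq J(x_0)$, follows immediately from hypothesis (vii): the admissible trajectory attaining cost $J(x_0)$ witnesses that the infimum in~\eqref{pro:optimal-control-problem} is at most $J(x_0)$. The substance of the argument is the reverse bound $J(x_0) \leq \int_0^\infty c(x(t),u(t))\,dt$ for every admissible $(x(t),u(t))$ starting at $x_0$, which, combined with (vii), forces equality.

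To obtain the reverse bound, I would fix an admissible trajectory and partition the time axis by its crossings of $\nonsmoothregion$. Condition~\ref{cond:monotonic} asserts that $G(x(t))$ is monotonic near every zero, so each crossing is transverse and isolated in $t$; by continuity of $x(t)$ and compactness, on every $[0,T]$ there are only finitely many crossing times $0 = t_0 < t_1 < \cdots < t_K \leq T$. On each open segment $(t_k,t_{k+1})$ the trajectory avoids $\nonsmoothregion$, so I can pick an index $i_k$ with $J(x(t)) = J_{i_k}(x(t))$ and $x(t)\in \smoothregion_{i_k}$ throughout the segment, which is legitimate because $J$ is defined as the pointwise minimum and the active piece is locally constant off $\nonsmoothregion$. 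On that segment the HJB equation~(v) for $J_{i_k}$ applied to the actual control $u(t)$ yields
\begin{equation*}
c(x(t),u(t)) + \frac{\partial J_{i_k}}{\partial x}(x(t))\tran f(x(t),u(t)) \geq 0,
\end{equation*}
which by the chain rule is $c(x(t),u(t)) \geq -\tfrac{d}{dt} J_{i_k}(x(t)) = -\tfrac{d}{dt} J(x(t))$. Integrating and using continuity at the endpoints gives $\int_{t_k}^{t_{k+1}} c\,dt \geq J(x(t_k)) - J(x(t_{k+1}))$.

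Next I would telescope across the crossings. By the continuity of $J$ in~(iii), the boundary value at each $t_k$ is the common one-sided limit from both adjacent segments, so summing the segment inequalities collapses to
\begin{equation*}
\int_0^T c(x(t),u(t))\,dt \;\geq\; J(x_0) - J(x(T)).
\end{equation*}
Sending $T \to \infty$, admissibility forces $x(T)\to \xur$ and continuity with~(iv) gives $J(x(T))\to 0$, so $\int_0^\infty c\,dt \geq J(x_0)$. Combined with~(vii), this proves $J^*(x_0) = J(x_0)$.

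The main obstacle I anticipate is the rigorous bookkeeping at $\nonsmoothregion$: one must exclude pathological trajectories that slide along the nonsmooth curve for a nontrivial interval, or that accumulate infinitely many crossings in a bounded time window. Condition~\ref{cond:monotonic} is precisely the hypothesis engineered to prevent both behaviors, so the crux is translating ``monotonicity of $G(x(t))$ near each zero'' into a clean statement that $\{t\in[0,T] : x(t)\in\nonsmoothregion\}$ is finite. A secondary technicality lies in the overlaps $\smoothregion_i \cap \smoothregion_{i+1}$: because the HJB holds for $J_i$ on all of $\smoothregion_i$, selecting $i_k$ to be any index realizing the pointwise minimum on the segment interior is enough to keep $J_{i_k}(x(t)) = J(x(t))$ and lets the telescoping go through without further adjustment.
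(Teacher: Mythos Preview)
Your overall architecture is the paper's: prove $J$ is a lower bound on every admissible cost by partitioning the time axis, applying the HJB inequality piecewise, telescoping via the continuity of $J$, and then invoking (vii) for the matching upper bound. The one place where your plan needs correction is the treatment of trajectories that \emph{slide} along $\nonsmoothregion$ for a nontrivial interval. You assert that condition~\ref{cond:monotonic} ``is precisely the hypothesis engineered to prevent both behaviors'' (accumulation and sliding) and that monotonicity of $G(x(t))$ makes each crossing ``transverse and isolated.'' That inference is not valid: a monotonic function may be identically zero on an interval, so~\ref{cond:monotonic} does not force $\{t\in[0,T]:x(t)\in\nonsmoothregion\}$ to be finite, and your segments $(t_k,t_{k+1})$ need not avoid $\nonsmoothregion$.

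The paper does not try to exclude sliding; it handles it. The object shown to be finite is not the set of times on $\nonsmoothregion$ but the set of \emph{switching times}---instants $t_s$ in every neighborhood of which there are both times in and out of $I=\{t:x(t)\in\nonsmoothregion\}$. Condition~\ref{cond:monotonic} rules out an accumulation of such switching times (this is the paper's key lemma), so $[0,T]$ decomposes into finitely many subintervals each of which lies either entirely on or entirely off $\nonsmoothregion$. On an ``off'' interval your argument applies verbatim. On an ``on'' interval, condition~(ii) guarantees that the pair $(i,j)$ with $J_i(x(t))=J_j(x(t))=J(x(t))$ is unique and adjacent, so $x(t)$ stays inside $\smoothregion_i$ and the HJB inequality for $J_i$ still yields $\tfrac{d}{dt}J(x(t))=\tfrac{d}{dt}J_i(x(t))\ge -c(x(t),u(t))$. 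With the partition taken at switching times rather than at individual crossings, the telescoping and the limit $T\to\infty$ go through exactly as you outlined.
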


The proof of Theorem~\ref{the:optimality} is provided in Appendix~\ref{sec:app:proof-optimality}. Theorem~\ref{the:optimality} provides a list of conditions to certify optimality of a piece-wise $C^1$ function $J(x)$. The only technical condition that is difficult to verify is \ref{cond:monotonic}, which is necessary to avoid state trajectories that cross the nonsmooth region $\nonsmoothregion$ in a pathological way, \eg imagine $\sin(\frac{1}{t})$ crossing the $x$-axis when $t$ tends to $0$. 

In the pendulum problem, each $\smoothregion_i$ is an open set containing $\xur$ and differs from $\smoothregion_{i\pm 1}$ by a shift of $2\pi$ along the $\theta$-axis, with $J_i(x)$ defined on it($J_i(x)$ is equal to $J_{i\pm 1}(x)$ by shifting $2\pi$). $\nonsmoothregion$ composes of an infinite number of nonsmooth \emph{spiral} lines, again shifted by $2\pi$ along the $\theta$-axis, intersected by $J_i(x)$ and $J_{i \pm 1}(x)$. The numerical algorithm we develop in Section~\ref{sec:approach}, based on PMP, ensures each $J_i(x)$ satisfies HJB~\eqref{eq:HJB} on $\smoothregion_i$\footnote{
    The satisfaction of HJB and $C^1$ is not entirely precise as it relies on numerical solutions, hence the development of Theorem~\ref{the:suboptimality} for error estimation.}, $J_i(x)$ is $C^1$, and $J(x)$ is attainable. For more details please refer to Figure \ref{fig:pendulum-value}.

\vspace{-2mm}
\subsection{Certificate of Suboptimality}
\vspace{-1mm}

Finding analytical solutions that exactly satisfy Theorem~\ref{the:optimality} is intractable. For numerically computed candidate value functions, we wish to compute a suboptimality certificate \wrt $J^*(x)$ of~\eqref{pro:optimal-control-problem}. Toward this, we need to first review the local LQR controller of the inverted pendulum.

{\bf Local LQR}. The pendulum dynamics~\eqref{eq:pendulum-dynamics} satisfies $f(\xur,0) = 0$ and we can linearize $f(x,u)$ around $(\xur,0)$ to obtain a linear system 
\bea\label{eq:linearization}
\xdot = A (x-\xur) + Bu, \quad A = \frac{\partial f}{\partial x}(\xur,0), B = \frac{\partial f}{\partial u}(\xur,0).
\eea
Similarly, we can perform a quadratic approximation of the cost function $c(x,u)$ around $(\xur,0)$
\bea\label{eq:cost-quadratic}
c(x,u) \approx q_1 (\theta - 2k\pi)^2 + q_2 \thetadot^2 + ru^2 = (x - \xur)\tran Q (x - \xur) + r u^2.
\eea
The optimal value function for minimizing~\eqref{eq:cost-quadratic} subject to~\eqref{eq:linearization} is a quadratic function
\bea \label{eq:LQR-value}
J_\infty(x) = (x- \xur)\tran P (x-\xur),
\eea
where $P \succ 0$ is the unique positive definite solution to the algebraic Riccati equation
$$
A\tran P + P A - \frac{1}{r} P B B\tran P + Q = 0.
$$

We now introduce a suboptimality certificate for any candidate $C^1$ value function.

\begin{theorem}[Sub-Optimality Certificate of A $C^1$ Value Function]
    Let $\calL := \{ x \in \Real{2} \mid J_\infty(x) \leq \varepsilon \}$ be defined that $\calL$ is a region of attraction using local LQR controller. Let $T_x > 0$ be the time taken by the optimal controller to enter region $\calL$ from initial state $x \in \Real{2}$. If $J(x)$ is a $C^1$ function satisfy certain conditions and there exists a continuous function $l(x)$ s.t.
        \bea\label{eq:subop}
        \min_{u \in \controlset} c(x,u) + \frac{\partial J}{\partial x}\tran f(x,u) = l(x), \quad \Vert l(x) \Vert  < \epsilon, \quad \forall x \in \Real{2},
        \eea
then $J(x)$ has bounded error from $J^*(x)$ as
\bea
J^*(x) \leq J(x) \leq J^*(x)+\epsilon T_x  + \delta +\varepsilon.
\eea
\end{theorem}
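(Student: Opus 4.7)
My plan is to prove the two inequalities by integrating the chain-rule identity $\frac{d}{dt}J(x(t)) = \frac{\partial J}{\partial x}\tran f(x(t),u(t))$ along two carefully chosen trajectories and combining with the HJB-residual inequality~\eqref{eq:subop}. Along any trajectory, $l(x) = \min_v\{c(x,v) + \frac{\partial J}{\partial x}\tran f(x,v)\}$ gives $c(x,u) + \frac{d}{dt}J(x) \geq l(x) \geq -\epsilon$, with equality when $u$ is the controller $u^J(x) := \argmin_v\{c(x,v) + \frac{\partial J}{\partial x}\tran f(x,v)\}$ induced by $J$. Splitting the horizon at the boundary $\partial \calL$ is what will buy the $\epsilon T_x + \delta + \varepsilon$ decomposition and prevent the tail of $l$ from blowing up the bound over an infinite horizon.

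For the \emph{upper bound}, I would feed the true optimal pair $(x^*(t),u^*(t))$ from $x_0$ into the inequality above and integrate on $[0, T_x]$. Using the Bellman decomposition $\int_0^{T_x} c(x^*,u^*)\,dt = J^*(x_0) - J^*(x^*(T_x))$ and rearranging yields
\[
J(x_0) - J^*(x_0) \;\leq\; J(x^*(T_x)) - J^*(x^*(T_x)) + \epsilon\, T_x.
\]
At $t=T_x$ we have $x^*(T_x) \in \calL$, so $J^*(x^*(T_x)) \geq 0$ and the LQR value bound gives $J_\infty(x^*(T_x)) \leq \varepsilon$. The ``certain conditions'' on $J$, which I read as a pointwise matching to the LQR value on $\calL$ with error at most $\delta$, then yield $J(x^*(T_x)) \leq \varepsilon + \delta$, which closes the upper bound.

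For the \emph{lower bound} $J^*(x_0) \leq J(x_0)$, I would integrate along the $J$-induced trajectory $x^J(t)$, where the residual inequality becomes the equality $c(x^J,u^J) + \frac{d}{dt}J(x^J) = l(x^J)$. Assuming (as part of the ``certain conditions'') that $u^J$ is admissible so that $x^J(t)\to\xur$ and $J(x^J(t))\to 0$, integration on $[0,\infty)$ gives
\[
\int_0^\infty c(x^J,u^J)\,dt \;=\; J(x_0) + \int_0^\infty l(x^J)\,dt.
\]
The left-hand side upper bounds $J^*(x_0)$ because $u^J$ is admissible, so under the implicit sign/integrability condition $\int_0^\infty l(x^J)\,dt \leq 0$---i.e.\ $J$ is (approximately) a super-solution of the HJB, which I also read into the ``certain conditions''---we obtain $J^*(x_0) \leq J(x_0)$ directly.

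The main technical obstacles I expect are: (i) showing the optimal trajectory actually enters $\calL$ in finite time $T_x$, which follows from admissibility of some stabilizing controller (e.g.\ energy shaping plus LQR) combined with quadratic growth of $c$ away from $\xur$, but must be made quantitative to justify treating $T_x$ as finite; (ii) verifying admissibility of $u^J$, which is nontrivial since a small HJB residual does not a priori imply closed-loop stability, and is where the ``certain conditions'' have to do real work; and (iii) pinning down precisely what $\delta$ quantifies---my interpretation is the $L^\infty$ gap between $J$ and $J_\infty$ on $\calL$, which is what bounds $J(x^*(T_x))$ in the upper-bound step. With these ingredients fixed, the remaining manipulations are routine Bellman-type identities.
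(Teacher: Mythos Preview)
Your upper-bound argument is essentially identical to the paper's: integrate the HJB-residual inequality along the true optimal trajectory on $[0,T_x]$, use the Bellman identity $\int_0^{T_x} c(x^*,u^*)\,dt = J^*(x_0)-J^*(x^*(T_x))$, and then bound $J(x^*(T_x))-J^*(x^*(T_x))$ by splitting through $J_\infty$ (your reading of $\delta$ as the $L^\infty$ gap $|J-J_\infty|$ on $\calL$ is exactly condition~(i) of the full statement).

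The lower bound is where you and the paper diverge. You try to \emph{derive} $J^*(x_0)\le J(x_0)$ by integrating along the $J$-induced closed loop and then invoking a sign condition $\int_0^\infty l(x^J)\,dt\le 0$ that you read into the unstated hypotheses. The paper instead \emph{assumes} it away: the missing ``certain condition'' is precisely condition~(iii), that for every $x_0$ there exists an admissible trajectory attaining cost $J(x_0)$ (this holds by construction for the PMP-based $J$ of Section~\ref{sec:approach}). With that assumption the lower bound is immediate from the definition of $J^*$ and no integration, admissibility of $u^J$, or sign hypothesis on $l$ is needed. Your route is not wrong, but it buys the inequality at the price of an extra hypothesis that is both harder to verify ($|l|<\epsilon$ alone gives no sign) and stronger than what the paper actually needs; the paper's attainability assumption is cleaner and is exactly what its numerical construction delivers.
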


\begin{theorem}[Sub-Optimality Certificate of A $C^1$ Value Function]\label{the:suboptimality}
    Let $\calL := \{ x \in \Real{2} \mid J_\infty(x) \leq \varepsilon \}$ be defined with a sufficiently small $\varepsilon > 0$ such that $\calL$ is a region of attraction for $\xur$ using the local LQR controller within the control bounds $\controlset$. Let $T_x > 0$ be the time taken by the optimal controller to enter region $\calL$ from initial state $x \in \Real{2}$. If $J(x)$ is a $C^1$ function on $\Real{2}$ that satisfies
    \begin{enumerate}[label=(\roman*)]
        \item\label{cond:subopt:one} $| J(x) - J_\infty(x) | \leq \delta$ for any $x \in \calL$, and
        \item\label{cond:subopt:two} there exists a continuous function $l(x)$ such that
        \bea\label{eq:subop}
        \min_{u \in \controlset} c(x,u) + \frac{\partial J}{\partial x}\tran f(x,u) = l(x), \quad \Vert l(x) \Vert  < \epsilon, \quad \forall x \in \Real{2},
        \eea
        \item $\forall x_0 \in \Real{2}$, there exists a trajectory $(x(t),u(t))$ starting from $x_0$ that attains cost $J(x_0)$,
    \end{enumerate} 
then $J(x)$ has bounded error from $J^*(x)$ as
\bea
J^*(x) \leq J(x) \leq J^*(x)+\epsilon T_x  + \delta +\varepsilon.
\eea
\end{theorem}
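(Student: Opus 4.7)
The bound to prove splits naturally into two one-sided inequalities, and I would prove them by quite different arguments.

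\textbf{Lower bound} $J^*(x)\le J(x)$. This is essentially a definitional matter. Condition \ref{cond:subopt:two} supplies, for any $x_0$, an admissible state-control trajectory $(x(t),u(t))$ whose infinite-horizon cost equals $J(x_0)$. Since $J^*(x_0)$ is by definition the infimum of such costs over all admissible trajectories (Assumption \ref{assum:admissible-control}), we get $J^*(x_0)\le J(x_0)$ immediately. The only subtlety is to check admissibility of the candidate trajectory, which is built into condition \ref{cond:subopt:two}.

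\textbf{Upper bound} $J(x)\le J^*(x)+\epsilon T_x+\delta+\varepsilon$. This is the substantive direction and follows the standard verification-theorem template adapted to the near-HJB residual. Fix $x_0$ and let $(x^\star(t),u^\star(t))$ denote the optimal trajectory from $x_0$; by definition of $T_x$ we have $x^\star(T_x)\in\calL$. From condition \ref{cond:subopt:two}, $\min_u\{c(x,u)+\tfrac{\partial J}{\partial x}\tran f(x,u)\}=l(x)\ge -\epsilon$, so in particular along the optimal trajectory
\begin{equation*}
\frac{d}{dt}J(x^\star(t))=\frac{\partial J}{\partial x}(x^\star(t))\tran f(x^\star(t),u^\star(t))\;\ge\;-\epsilon\;-\;c(x^\star(t),u^\star(t)).
\end{equation*}
Integrating from $0$ to $T_x$ and rearranging gives
\begin{equation*}
J(x_0)\;\le\;J(x^\star(T_x))+\epsilon\,T_x+\int_0^{T_x}c(x^\star(t),u^\star(t))\,dt.
\end{equation*}
The running-cost integral is bounded by $J^*(x_0)$ because $c\ge 0$ and $\int_0^\infty c\,dt=J^*(x_0)$. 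The terminal term is controlled by condition \ref{cond:subopt:one}: since $x^\star(T_x)\in\calL$, we have $J(x^\star(T_x))\le J_\infty(x^\star(T_x))+\delta\le\varepsilon+\delta$. Combining, $J(x_0)\le J^*(x_0)+\epsilon T_x+\delta+\varepsilon$.

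\textbf{Where the difficulty lies.} The calculus is routine; the delicate points are existence/admissibility issues. First, I have to know that an optimal trajectory actually exists and that $T_x$ is finite—finiteness comes from Assumption \ref{assum:admissible-control} (the optimal trajectory asymptotically reaches $\xur\in\calL$), while existence may require either a compactness argument or else replacing $x^\star$ with an $\eta$-optimal trajectory and letting $\eta\downarrow 0$. Second, the chain-rule step uses $J\in C^1$ globally, which is why Theorem~\ref{the:suboptimality} is stated for smooth candidate functions; extending it to the piece-wise $C^1$ setting of Theorem~\ref{the:optimality} would require the monotonic-crossing hypothesis \ref{cond:monotonic} to rule out the trajectory lingering on the nonsmooth set $\nonsmoothregion$. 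Finally, one should verify that the infimum in \ref{cond:subopt:two} is attained (or at least lower-bounded by $-\epsilon$ uniformly), which is automatic here because $c$ is strongly convex in $u$ and $f$ is affine in $u$, so the minimizing $u$ is either the unconstrained minimizer or lies on $\partial\controlset$ and is a continuous function of $x$.
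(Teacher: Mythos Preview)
Your proof is correct and follows essentially the same verification-theorem approach as the paper: integrate the near-HJB inequality along the optimal trajectory on $[0,T_x]$, bound the accumulated residual by $\epsilon T_x$, and control the terminal term $J(x^\star(T_x))$ via condition~(i) and $J_\infty\le\varepsilon$ on $\calL$. One minor slip: the lower bound $J^*(x)\le J(x)$ is supplied by condition~(iii) (attainability), not condition~\ref{cond:subopt:two} as you wrote twice; and where you bound $\int_0^{T_x}c\,dt\le J^*(x_0)$ directly, the paper instead uses the exact identity $\int_0^{T_x}c\,dt=J^*(x_0)-J^*(x^\star(T_x))$ and then bounds the difference $J(x^\star(T_x))-J^*(x^\star(T_x))\le\delta+\varepsilon$, but both routes yield the same final inequality.
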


The proof of Theorem~\ref{the:suboptimality} is given in Appendix~\ref{sec:app:proof-suboptimality}.
Theorem~\ref{the:suboptimality} is computationally useful as $J(x)$ is usually a $C^1$ function interpolated from samples. Condition~\ref{cond:subopt:one} is easy to realize, in fact, one can choose $J(x) \equiv J_\infty(x)$ for $x \in \calL$ so that $\delta = 0$ (as what we will do in Section~\ref{sec:approach}, we will solve backward ODEs from $J_\infty(x)$ to get $J(x)$, so in $\calL$ they are the same). Condition~\ref{cond:subopt:two} is also checkable as one can compute $l(x)$ from $J(x)$ (the minimization in~\eqref{eq:subop} is closed-form solvable) and evaluate $\epsilon$. $T_x$ needs to be estimated. In practice, we approximate $T_x < 10$ as we can swing up the pendulum to region $\calL$ within ten seconds.\footnote{Or we can approximate $T_x$ by $J_{\epsilon_1}-J_{\epsilon_2} \approx (\epsilon_1-\epsilon_2)T_x$.}


\vspace{-4mm}
\section{Numerical Approximation by Pontryagin's Minimum Principle}
\label{sec:approach}

We design an algorithm based on PMP to compute a value function that verifies Theorem~\ref{the:optimality}-\ref{the:suboptimality}.

\vspace{-4mm}
\subsection{Numerical Procedure}
\vspace{-1mm}

We begin by recalling Pontryagin's minimum principle, which can be derived using the method of characteristics for the HJB~\eqref{eq:HJB}.

\begin{lemma}[Pontryagin's Minimum Principle]\label{lemma:PMP}
    Let $(u^*(t),x^*(t)),t\in[0,T] $ be a pair of optimal control and state trajectories satisfying dynamics~\eqref{eq:pendulum-dynamics} and $x^*(0) = x_0$ as given. Let $p(t)$ be the solution of the adjoint equation \emph{almost everywhere}
    \bea\label{eq:adjoint}
    \dot{p}(t) = -\nabla_x H(x^*(t),u^*(t),p(t)),\quad p(T) = \nabla_x J(x^*(T))
    \eea
    where $J$ is the optimal value function and $H$ is the Hamiltonian defined by
\bea
    H(x,u,p) = c(x,u) + p^T f(x,u)
\eea
Then, for almost every $t \in [0, T]$ we have
\bea \label{eq:pmp:solve-u}
u^*(t) = \argmin_{u\in \controlset } H(x^*(t),u,p(t))
\eea
\end{lemma}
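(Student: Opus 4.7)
The plan is to derive PMP from the HJB equation via the method of characteristics, treating the optimal trajectory itself as the characteristic curve. I would begin by defining the adjoint candidate $p(t) := \nabla_x J(x^*(t))$ along the optimal state trajectory, which automatically satisfies the prescribed terminal condition $p(T) = \nabla_x J(x^*(T))$. The whole proof then reduces to showing (a) the Hamiltonian minimization condition~\eqref{eq:pmp:solve-u} holds at each $t$, and (b) this definition of $p(t)$ satisfies the adjoint ODE~\eqref{eq:adjoint}. The ``almost everywhere'' qualifier will turn out to correspond exactly to those time instants at which $x^*(t)$ lies on the nonsmooth set $\nonsmoothregion$ of Theorem~\ref{the:optimality}, where $\nabla_x J$ is not well-defined.

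For step (a), I would invoke the HJB equation~\eqref{eq:HJB} evaluated at $x = x^*(t)$: since $J$ is the optimal value function, $\min_{u \in \controlset} H(x^*(t), u, \nabla_x J(x^*(t))) = 0$, i.e., $\min_u H(x^*(t), u, p(t)) = 0$. It remains to argue that this minimum is attained precisely by $u^*(t)$. This I would do via a Bellman / dynamic programming argument: along any optimal trajectory, the principle of optimality gives $J(x^*(t)) = \int_t^{t+h} c(x^*(\tau), u^*(\tau))\, d\tau + J(x^*(t+h))$. Dividing by $h$ and taking $h \downarrow 0$ at a Lebesgue point of $u^*$ yields $c(x^*(t), u^*(t)) + \nabla_x J(x^*(t))\tran f(x^*(t), u^*(t)) = 0$, so $u^*(t)$ achieves the HJB minimum, giving~\eqref{eq:pmp:solve-u}.

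For step (b), I would differentiate $p(t) = \nabla_x J(x^*(t))$ in $t$ using the chain rule, obtaining $\dot p(t) = \nabla_x^2 J(x^*(t))\, f(x^*(t), u^*(t))$ wherever $J$ is twice differentiable at $x^*(t)$. Independently, I would apply the envelope theorem to the HJB: differentiating $H(x, u^*(x), \nabla_x J(x)) \equiv 0$ with respect to $x$, the explicit dependence of $u^*$ on $x$ drops out because $u^*(x)$ is a minimizer of $H(x,\cdot,\nabla_x J(x))$, leaving
\begin{equation*}
\nabla_x c(x, u^*) + \left(\tfrac{\partial f}{\partial x}(x,u^*)\right)\!\tran \nabla_x J(x) + \nabla_x^2 J(x)\, f(x, u^*) = 0.
\end{equation*}
Evaluating at $x = x^*(t)$ and substituting $p(t) = \nabla_x J(x^*(t))$ yields $\nabla_x^2 J(x^*(t))\, f(x^*(t), u^*(t)) = -\nabla_x H(x^*(t), u^*(t), p(t))$, which matches~\eqref{eq:adjoint}.

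The main obstacle is regularity: Theorem~\ref{the:non-smooth} tells us $J$ is not globally $C^1$, so $\nabla_x J$ (and \emph{a fortiori} $\nabla_x^2 J$) fail to exist on the spiral nonsmooth set $\nonsmoothregion$. I would handle this by noting that on each smooth piece $\smoothregion_i$ used in Theorem~\ref{the:optimality}, $J_i$ is $C^1$ (and generically $C^2$ away from $\nonsmoothregion$), so the envelope-theorem calculation is valid. The optimal trajectory $x^*(t)$ can intersect $\nonsmoothregion$ only on a set of times of measure zero (by the monotonicity condition~\ref{cond:monotonic} of Theorem~\ref{the:optimality}, such crossings are isolated), so the adjoint ODE and the Hamiltonian-minimization condition hold on the complement of a null set, which is exactly the ``almost everywhere'' statement. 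A secondary subtlety is justifying the Bellman-based pointwise identity at Lebesgue points of the piecewise-continuous admissible control $u^*$; this is a standard measure-theoretic step that I would cite rather than redo in detail.
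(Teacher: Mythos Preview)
The paper does not actually prove this lemma: it is stated as a classical result with a reference to \cite{bertsekas12book-dp}, accompanied only by the remark that PMP ``can be derived using the method of characteristics for the HJB~\eqref{eq:HJB}.'' Your proposal is precisely that method-of-characteristics derivation, so in spirit you are doing exactly what the paper gestures at but omits.

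As a standalone argument your outline is the standard one and is essentially sound. The one place where you go beyond what the paper supplies is step~(b): differentiating $p(t)=\nabla_x J(x^*(t))$ and applying the envelope theorem both require $J$ to be $C^2$ on the smooth pieces $\smoothregion_i$, whereas Theorem~\ref{the:optimality} only posits $J_i\in C^1$. You acknowledge this with ``generically $C^2$ away from $\nonsmoothregion$,'' but that is an additional regularity hypothesis, not something the paper proves. In a formal write-up you would either assume it, or bypass it by appealing directly to a textbook PMP (as the paper does) rather than deriving it from HJB.
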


To use Lemma~\ref{lemma:PMP}, we will (i) solve the problem \eqref{eq:pmp:solve-u}, and (ii) provide a terminal condition $p(T)$.

{\bf Solve $u^*$}. Observe that the pendulum dyamics~\eqref{eq:pendulum-dynamics} is control-affine 
$$
f(x,u) = f_1(x) + f_2(x)u, \quad f_1(x) = \begin{bmatrix}
    \thetadot \\ - \frac{1}{ml^2} (b \thetadot - mgl \sin\theta)
\end{bmatrix}, f_2(x) = \begin{bmatrix}
    0 \\ \frac{1}{ml^2}
\end{bmatrix},
$$
and the cost function $c(x,u)$~\eqref{eq:cost-function} is quadratic in $u$. Therefore, the solution to~\eqref{eq:pmp:solve-u} is
\bea \label{eq:solution-u}
u^* = \begin{cases}
    - \frac{1}{2r} p\tran f_2(x) & \text{if } \controlset = \Real{} \\
    \clip \parentheses{ - \frac{1}{2r} p\tran f_2(x), - u_\max, u_\max} & \text{if } \controlset = [-u_\max, u_\max]
\end{cases}
\eea 
where the ``$\clip$'' function saturates the control between $- u_\max$ and $u_\max$. Inserting~\eqref{eq:solution-u} back to the adjoint equation~\eqref{eq:adjoint} and the original dynamics~\eqref{eq:pendulum-dynamics}, we obtain an ODE in the optimal state $x^*(t)$ and the co-state $p(t)$, which can be solved when boundary conditions are provided.

{\bf Terminal Condition}. Inspired by~\cite{holzhuter04automatica-optimal}, we provide terminal conditions of the PDE, \ie a pair of $x^*(T_x)$ and $p(T_x)$ (because the associated $p(0)$ with $x^*(0)$ is unavailable). Because the LQR value function~\eqref{eq:LQR-value} is locally optimal around $\xur$, for any $x^*(T_x)$ that is on the boundary of the small ellipse (such that $\calL$ is defined as in Theorem~\ref{the:suboptimality})
\bea\label{eq:boundary-ellipse}
\partial \calL = \{ x \in \Real{2} \mid J_\infty(x) = \varepsilon \},
\eea
we can approximate 
$$
p(T_x) = 2P(x^*(T_x) - \xur).
$$
Once $x^*(T_x)$ and $p(T_x)$ are available, we can solve the ODE using \emph{backward integration} to obtain a locally optimal trajectory that satisfies PMP.

{\bf Sample $x^*(T_x)$}. We then wish to densely sample $x^*(T_x)$ on $\partial \calL$~\eqref{eq:boundary-ellipse} to obtain a large amount of PMP trajectories to densely cover the state space $\Real{2}$. A naive uniform sampling strategy will lead to trajectories clustered in certain regions and do not fully cover $\Real{2}$. Inspired by~\cite{holzhuter04automatica-optimal}, we sample $x^*(T_x)$ based on a distance metric between two PMP trajectories. Let $x_1^*(t)$ and $x_2^*(t)$ be two PMP trajectories already computed, the distance between these two trajectories is defined as 
\bea\label{eq:distance-metric}
d(x_1^*(t),x_2^*(t)) = \Vert x_1^c - x_2^c \Vert, \quad x_i^c = \{ x_i(t_c) \mid J(x_i(t_c)) = V_c, t_c \in [0,+\infty] \},i=1,2,
\eea
with $V_c$ a positive number larger than $\varepsilon$ (\eg $V_c = 10000\varepsilon$). The idea of this metric is to ensure the trajectories stay close after backward integration. The sampling algorithm is designed to make adjacent PMP trajectories have equal distances based on~\eqref{eq:distance-metric}. Details are provided in Appendix~\ref{sec:app:sampling}.

\setlength{\textfloatsep}{0pt}%
\begin{algorithm}[h]
    \SetAlgoLined
    \caption{Compute the Nonsmooth Curve \label{alg:nonsmooth-line}}
    \textbf{Input:} PMP trajectories $\calT$; small value increment $\Delta>0$; number of values $M$\\
    \textbf{Output:} Set of intersection points $\calS$\\
    \For{$k \gets 1$ to $M$}{
        $\calC \leftarrow \emptyset$ \label{line:emptyset}\\
        \For{$\tau$ in $\calT$}{
            $j_\max = \max \{ j \mid \tau(j).\mathrm{value} \leq k \Delta \}$\\
            $\calC  \leftarrow \calC \cup  \tau(j_\max).\mathrm{state}$
        } \label{line:computestates}
        $S = \mathrm{shift\_intersect}(\calC)$ \label{line:intersect}\\
        $\calS \leftarrow \calS \cup S$
    }
\end{algorithm}

{\bf Intersection of PMP Trajectories \& the Nonsmooth Curve}. After we obtain a large set of PMP trajectories (\cf Appendix~\ref{app:sec:pmp-trajectories}), they will intersect with each other and themselves on a 2D plane. We calculate the state where a trajectory intersect with others at the first time, in order to stop it there. All these terminal states form a spiral line, which you can find in the middle column of Figure \ref{fig:pendulum-value}. 

Given two PMP trajectories $x^*_1(t)$ and $x^*_2(t)$, if there exist $t_1$ and $t_2$ such that $x^*_{12} = x^*_1(t_1) = x^*_2(t_2)$ and $J_1(x^*_1(t_1)) = J_2(x^*_2(t_2))$ (here $J_1$ and $J_2$ are the same as in Theorem~\ref{the:non-smooth}), then $x^*_{12}$ is an intersection point, from which there exist (at least) two optimal\footnote{Here "optimal" solely means it satisfied PMP and is a candidate for optimal trajectory} trajectories achieving the same cost. Therefore, by the same reasoning as in Theorem~\ref{the:non-smooth}, $x^*_{12}$ can be a point at which the optimal value function is nonsmooth. Algorithm~\ref{alg:nonsmooth-line} presents a method to compute all these intersection points. Given a set of PMP trajectories $\calT$ where each trajectory $\tau \in \calT$ contains a sequence of states and values (\ie $x^*(t)$ and $J(x^*(t))$ at discrete timesteps, $\tau(j).\mathrm{value}$ and $\tau(j).\mathrm{state}$ represent the value and state, respectively), line~\ref{line:emptyset}-\ref{line:computestates} computes all the states of the trajectories that have value $k \Delta$. Among these states $\calC$, line~\ref{line:intersect} finds the common states by first forming a polygon using the points in $\calC$ and then intersect $\calC$ with a copy of $\calC$ shifted along $\theta$-axis by $2\pi$\footnote{One can treat $\calC$ as contour line}. The output $\calS$ thus contains all such intersection points forming a spiral line.

{\bf Controller Synthesis}. 
After getting the nonsmooth curves, we restrict all raw PMP trajectories to lie inside the nonsmooth curves. Then, we interpolate the value samples to obtain the value function. To synthesize controls, we use the solution in~\eqref{eq:solution-u} with interpolated co-state $p$ from samples.

\vspace{-3mm}
\subsection{Results}
\label{sec:approach:results}

{\bf Setup}. 
We use
$m = 1$, $b = 0.1$, $l = 1$, $g = 9.8$, $q_1 = 1$, $q_2 = 1$, $r=1$ in the dynamics~\eqref{eq:pendulum-dynamics} and cost function~\eqref{eq:cost-function}. We set $u_\max = 2$ in the case of control saturation. We are interested in the optimal value function on the region $x \in [-8,8]\times[-8,8]$, as it contains $[0,0],[2\pi,0]$ and $[-2\pi,0]$ (once we obtain $J$ on this region we can shift it by $2k\pi$ to get other regions). We set $\varepsilon=0.0002$ in~\eqref{eq:boundary-ellipse}.

{\bf Optimal Value Function}. Fig.~\ref{fig:pendulum-value} shows the optimal value functions both (a) without control saturation and (b) with control saturation. The middle column of Fig.~\ref{fig:pendulum-value} draws the nonsmooth curves obtained using Algorithm~\ref{alg:nonsmooth-line}, with the colored regions indicating the regions of attraction to the upright position $\xur$ (\eg for any initial state in the blue region, the optimal trajectory will stay in the blue region and converges to $\xur$). In each of the colored regions, the HJB residuals, \ie $l(x)$ in Theorem~\ref{the:suboptimality}, are about $10^{-4}$. Therefore, according to Theorems~\ref{the:optimality} and~\ref{the:suboptimality}, we can conclude the numerically computed value functions in Fig.~\ref{fig:pendulum-value} are the optimal value functions, up to minor numerical inaccuracies and suboptimality. To further verify the correctness of the optimal value functions, Fig.~\ref{fig:compare} compares the numerical value function with a smooth degree-7 polynomial lower bound computed using SOS relaxations in the case of control saturation~\citep{Yang23ral-lowerbound}. As we can see, the optimal value function sits above the polynomial lower bound, and the smooth polynomial hardly captures the nonsmooth geometry, especially around $[\pm2\pi,0]$. 

\begin{remark}[Discontinuity]
    The optimal value function in Fig.~\ref{fig:pendulum-value}(b) appears to be \emph{discontinuous}. This is a puzzle that we cannot formally (dis-)prove. Even after adding a discount factor in the cost~\eqref{eq:cost-function}, the discontinuous phenomenon remains, see Appendix~\ref{app:sec:discount-factor}. As a result, we cannot conclude the (dis-)continuity of the true optimal value function by using \cite[Theorem 1.5]{bardi97book-optimal}.
\end{remark}

\begin{figure}[t]
	\vspace{-14mm}
	\begin{center}
		\begin{tabular}{ccc}
            \hspace{-5mm}	
            \begin{minipage}{0.42\textwidth}
				\centering
				\includegraphics[width=\textwidth]{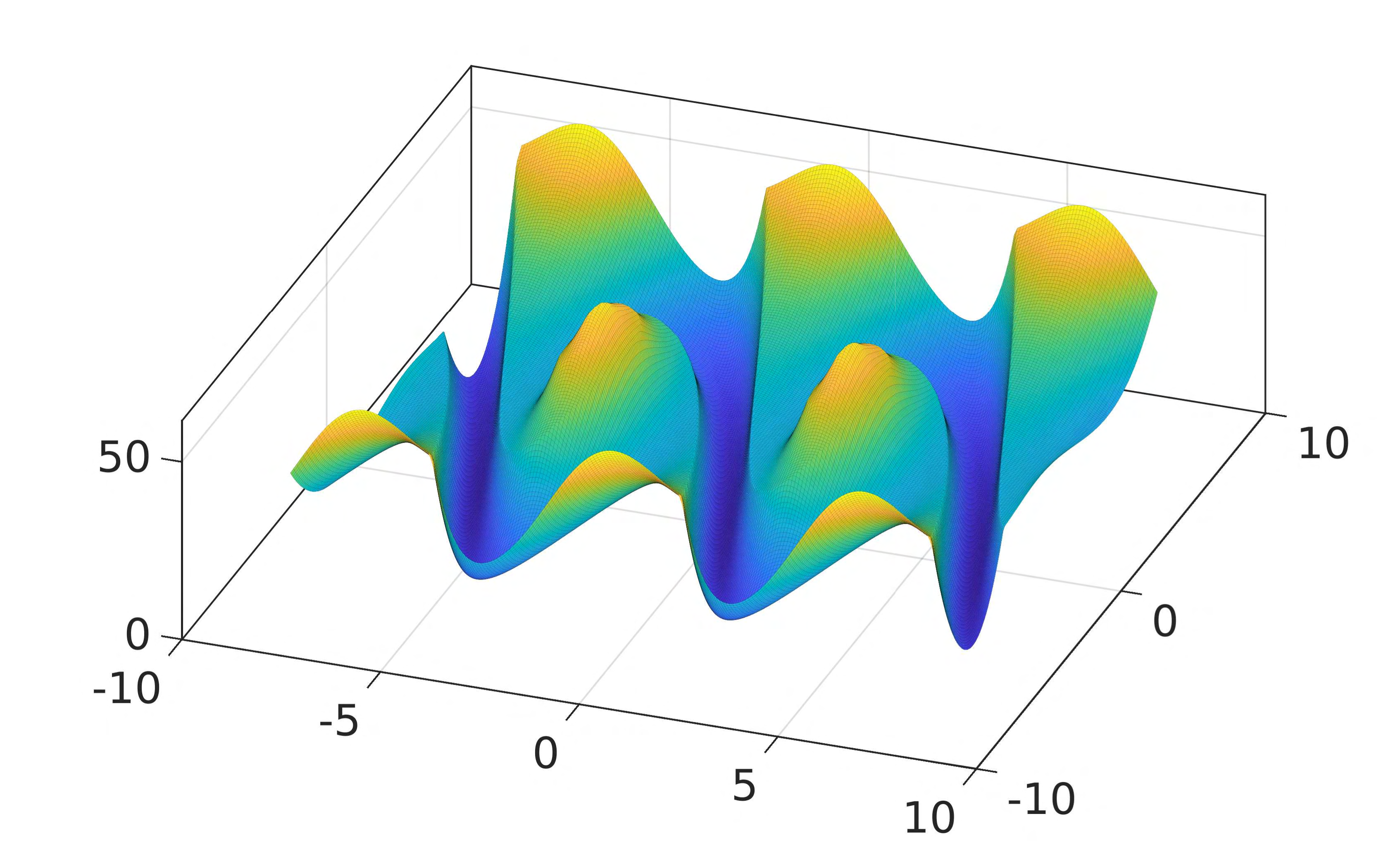}
			\end{minipage}
			&\hspace{-6mm}
			\begin{minipage}{0.28\textwidth}
				\centering
				\includegraphics[width=\textwidth]{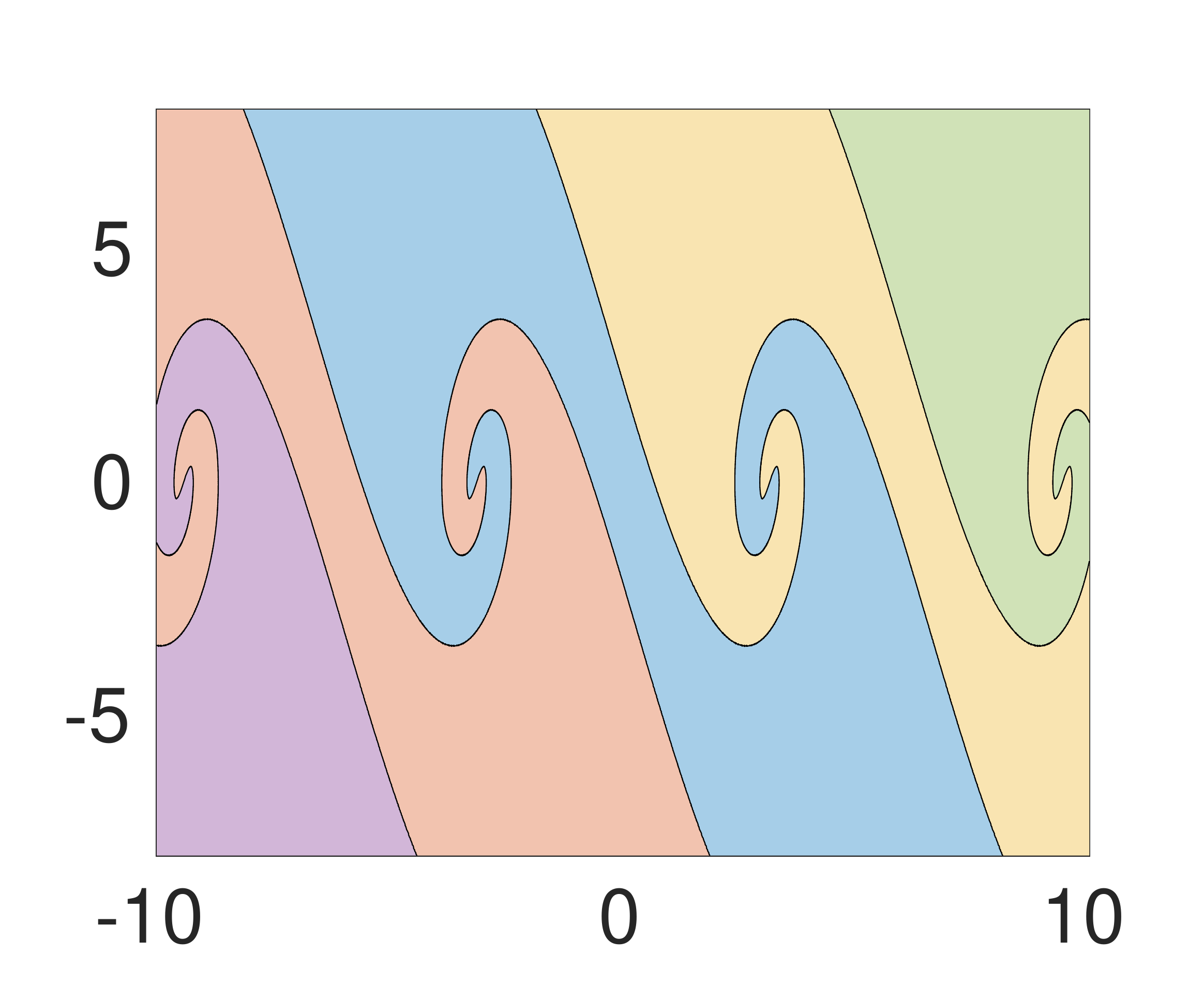}
			\end{minipage}
            &
            \hspace{-8mm}
			\begin{minipage}{0.37\textwidth}
				\centering
				\includegraphics[width=\textwidth]{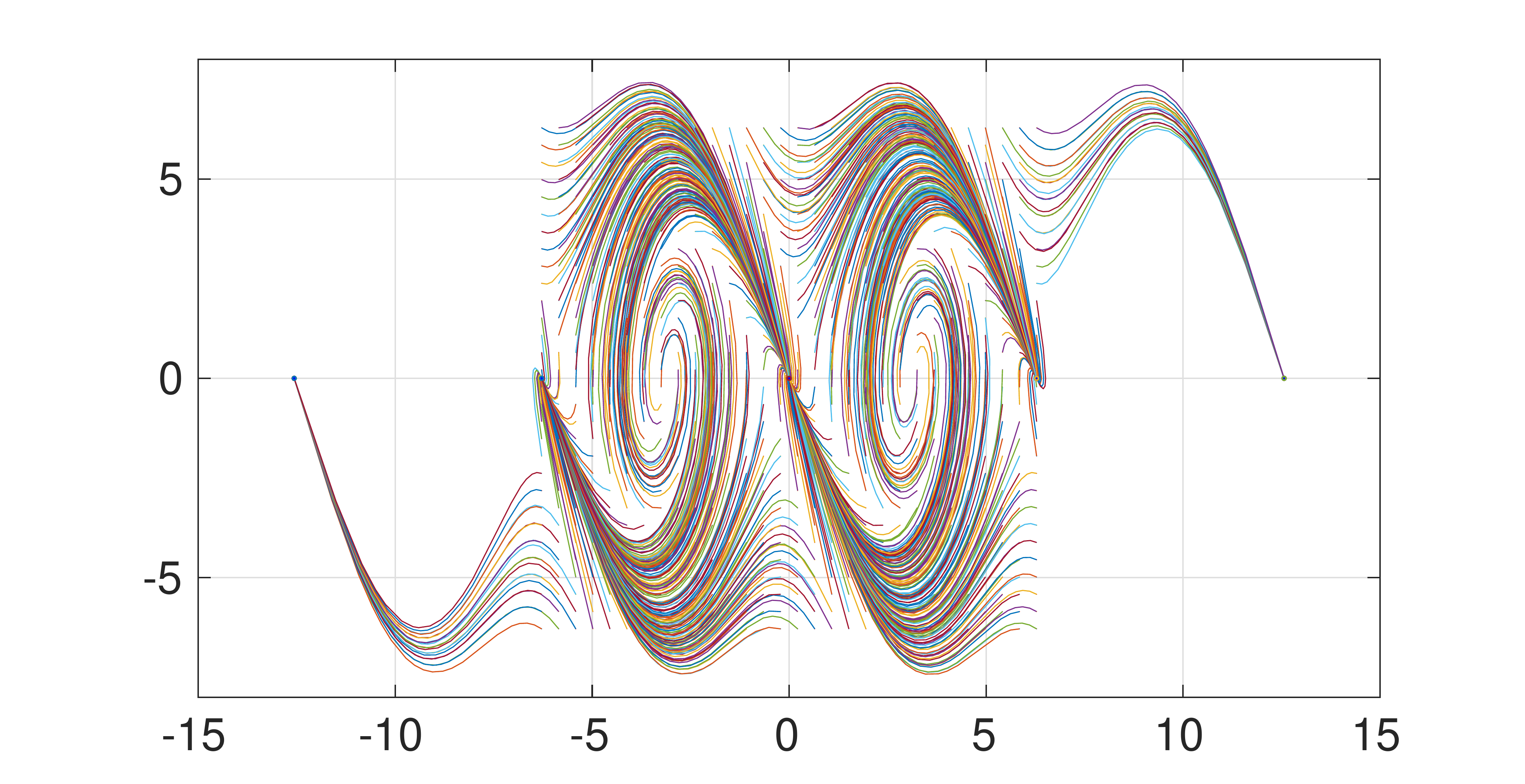}
			\end{minipage} \\
			\multicolumn{3}{c}{\smaller (a) Without control saturation $\controlset = \Real{}$}
		\end{tabular}
  \begin{tabular}{ccc}
            \hspace{-5mm}	
            \begin{minipage}{0.42\textwidth}
				\centering
				\includegraphics[width=\textwidth]{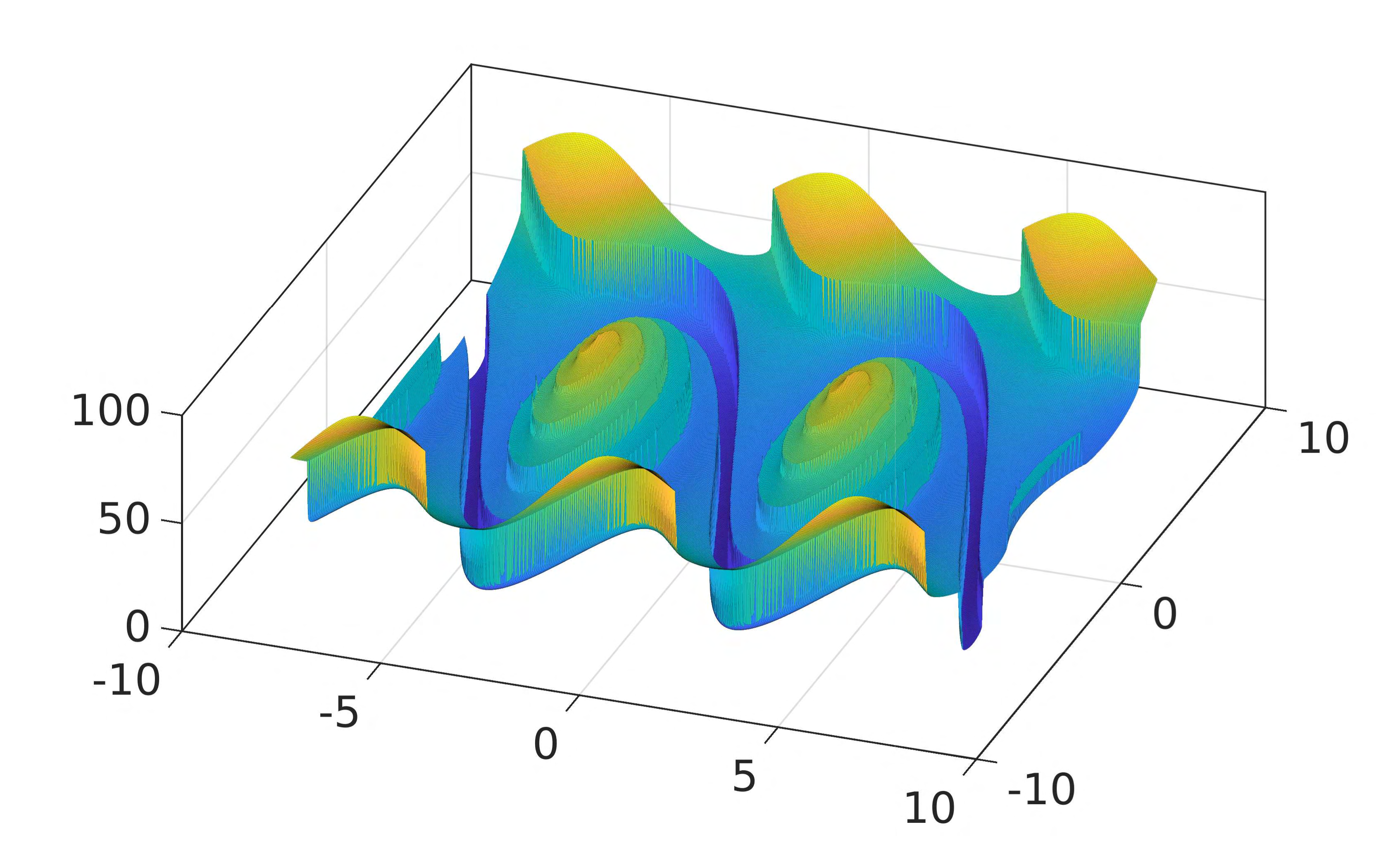}
			\end{minipage}
			&\hspace{-6mm}
			\begin{minipage}{0.28\textwidth}
				\centering
				\includegraphics[width=\textwidth]{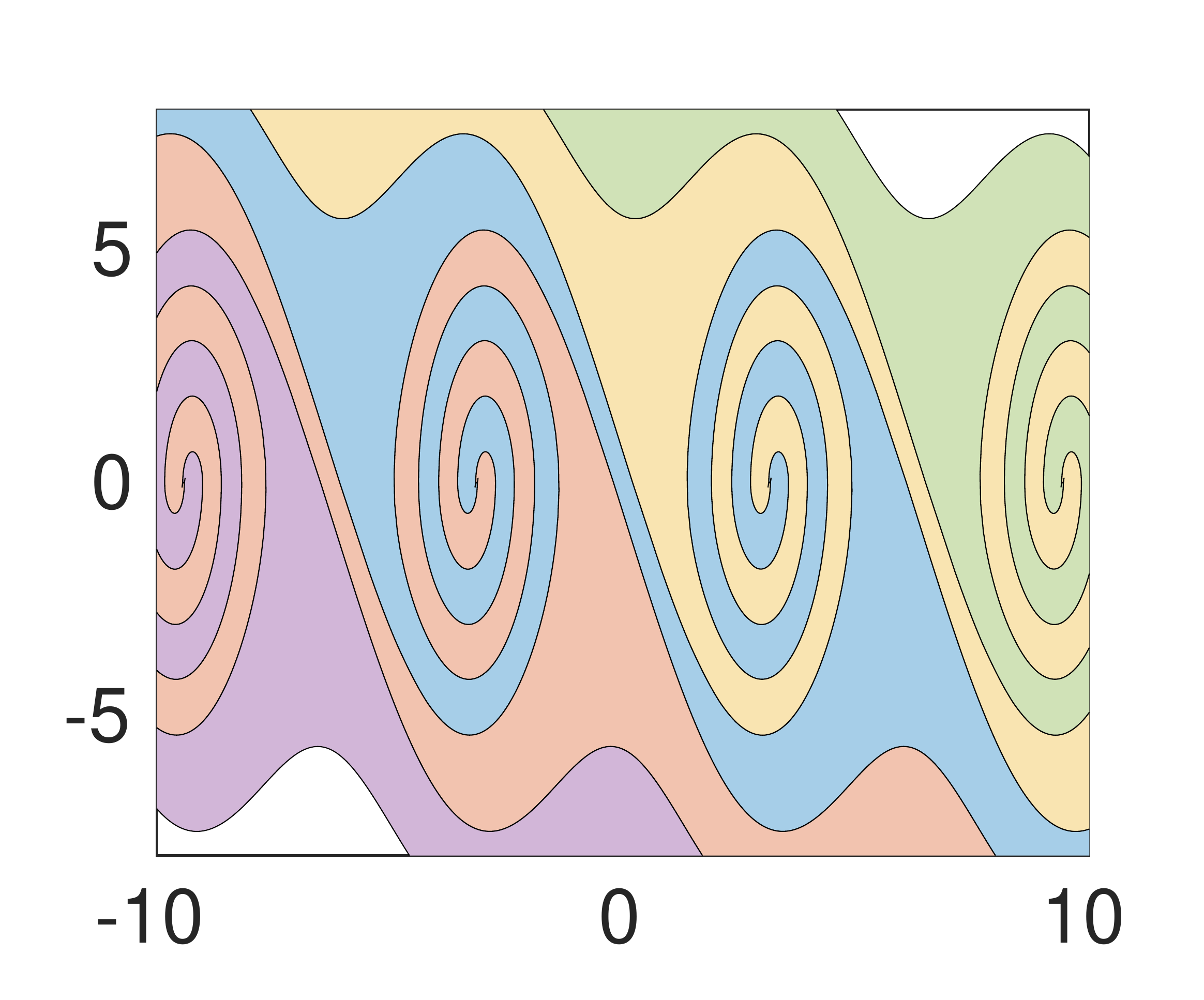}
			\end{minipage}
            &
            \hspace{-8mm}
			\begin{minipage}{0.37\textwidth}
				\centering
				\includegraphics[width=\textwidth]{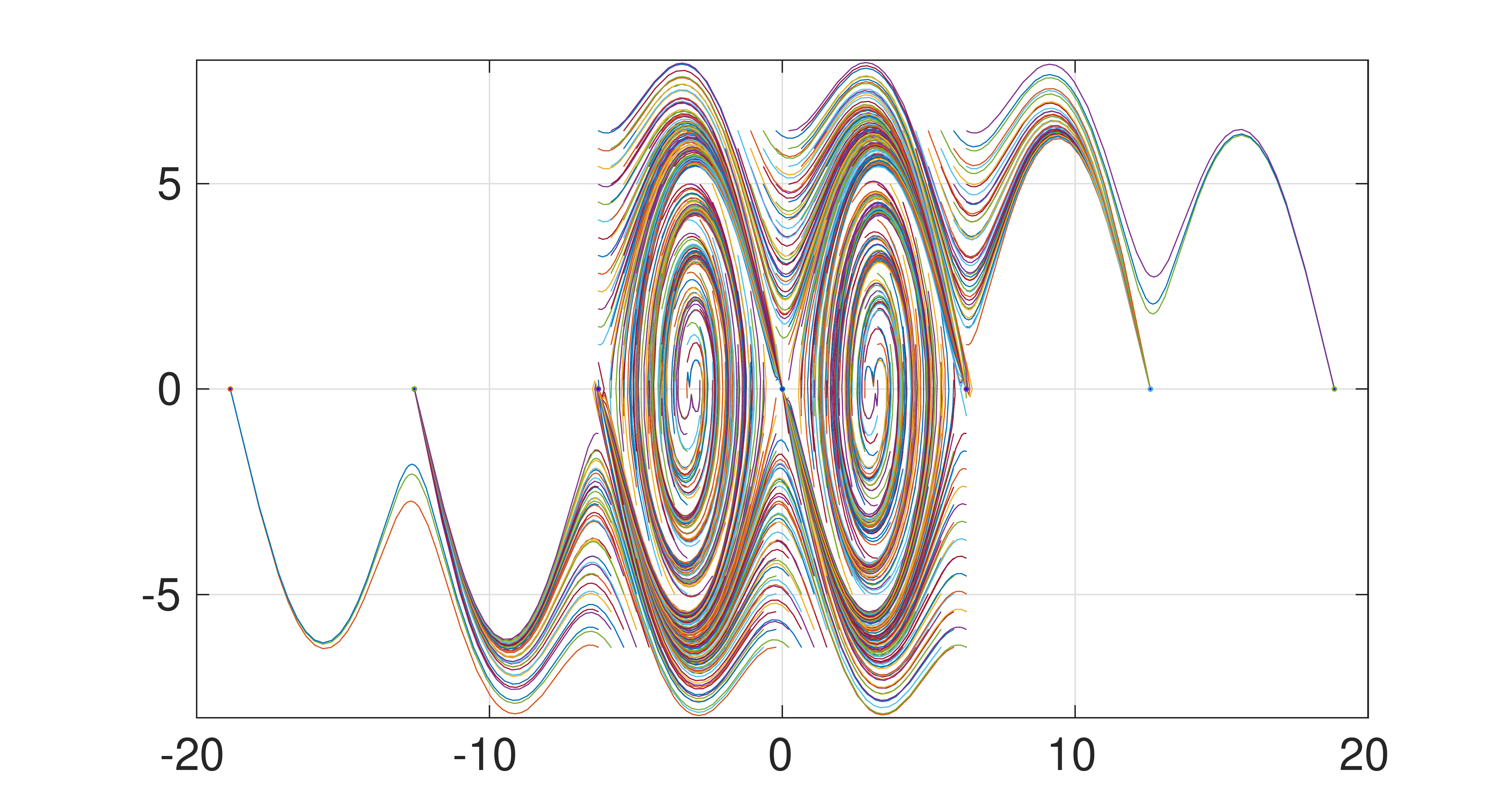}
			\end{minipage} \\
			\multicolumn{3}{c}{\smaller (b) With control saturation $\controlset = [- u_\max, u_\max], u_\max=2$}
		\end{tabular}
	\end{center}
	\vspace{-6mm}
	\caption{Optimal value function and controller. Left: optimal value function shown in 3D plots. Middle: nonsmooth curves computed from Algorithm~\ref{alg:nonsmooth-line}. Right: global stabilizing trajectories starting from $30 \times 30$ initial states. Better viewed when zoomed in. \label{fig:pendulum-value}} 
	\vspace{-1mm}
\end{figure}

{\bf Optimal Controller}.
The right column of Fig.~\ref{fig:pendulum-value} plots state trajectories using the optimal controller induced by the optimal value function via~\eqref{eq:solution-u}, starting from a dense grid of $30 \times 30$ initial states. Observe that the optimal controller swings up and stabilizes the pendulum in all cases. We then investigate if the optimal controller outperforms other algorithms. We implement four baselines: (i) energy pumping plus local LQR, (ii) open-loop trajectory optimization using direct collocation with $80$ variable timesteps, (iii) model predictive control (MPC) with $5$ seconds prediction horizon, at $50$ Hz and $100$ Hz using~\citep{fiedler23cep-do-mpc}, and (iv) proximal policy optimization (PPO)~\citep{schulman17arxiv-ppo,raffin21jmlr-stablebaseline}. Comparison with the first three baselines using the same set of parameters as before are shown in Fig.~\ref{fig:compare} right column. Observe that the optimal controller achieves lower costs than energy pumping and trajectory optimization, and almost the same cost as MPC. PPO fails in the original set of parameters but succeeds with $q_1 =1$, $q_2 = 0.1$, $r=0.01$. We therefore rerun our numerical procedure to compare our controller with PPO, shown in Fig.~\ref{fig:compare} middle column. Similarly, the optimal controller outperforms PPO in terms of lower cost.

\begin{figure}[t]
	\begin{center}
		\vspace{-6mm}
		\begin{tabular}{ccc}
            \hspace{-5mm}	
            \begin{minipage}{0.42\textwidth}
				\centering
				\includegraphics[width=\textwidth]{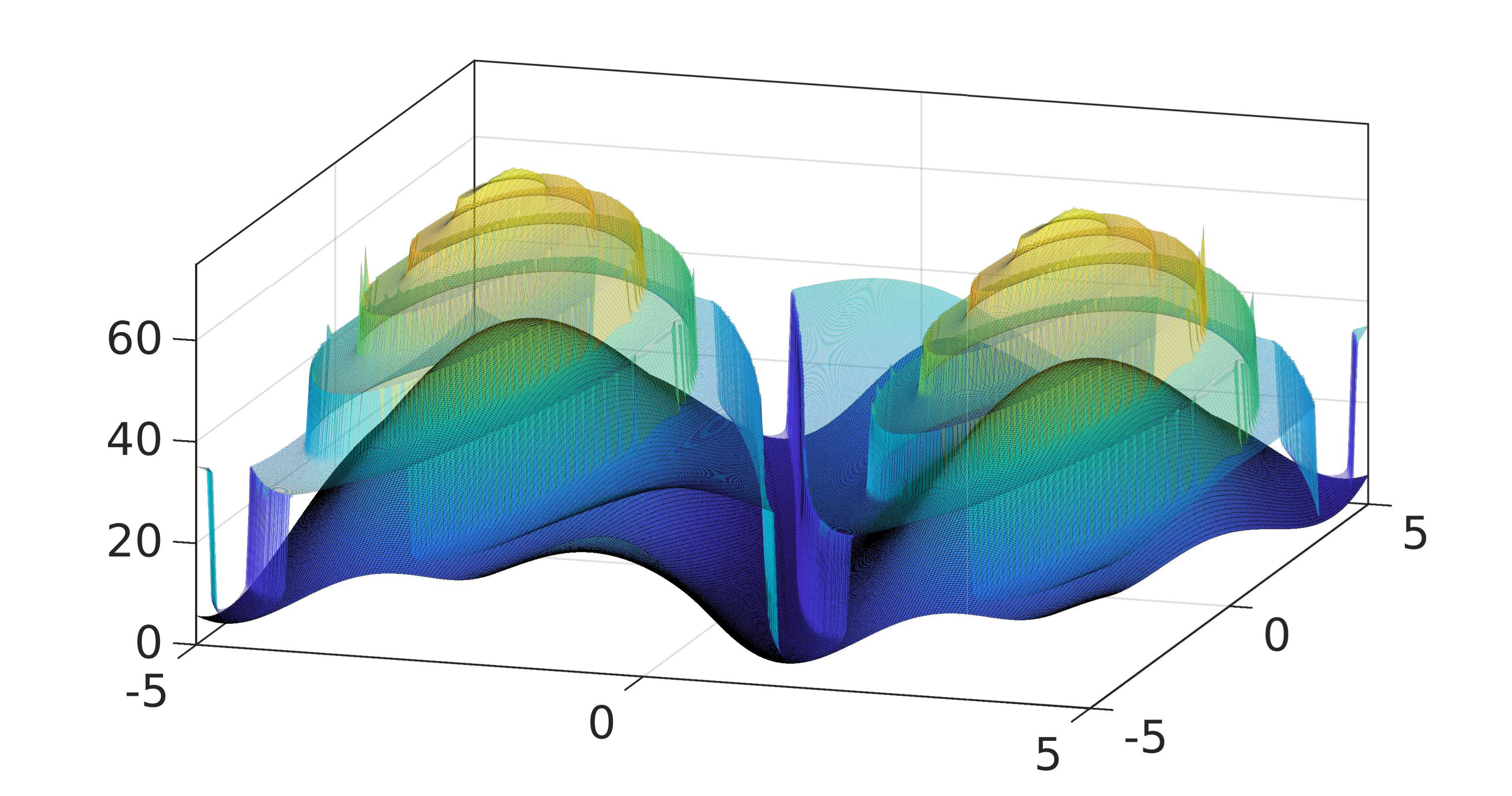}
			\end{minipage}
			&\hspace{-6mm}
			\begin{minipage}{0.29\textwidth}
				\centering
				\includegraphics[width=\textwidth]{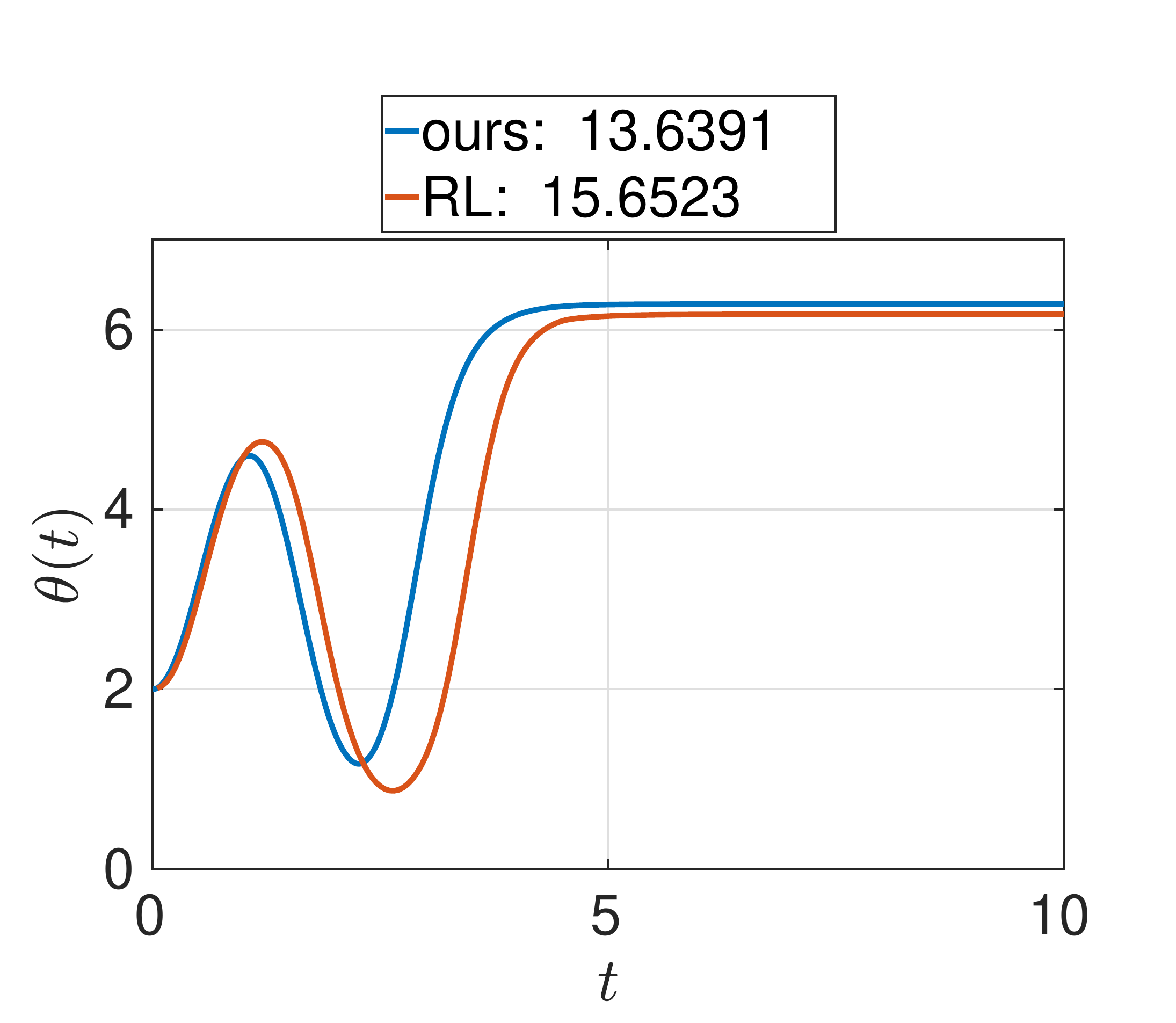}
			\end{minipage}
            &
            \hspace{-8mm}
			\begin{minipage}{0.33\textwidth}
				\centering
				\includegraphics[width=\textwidth]{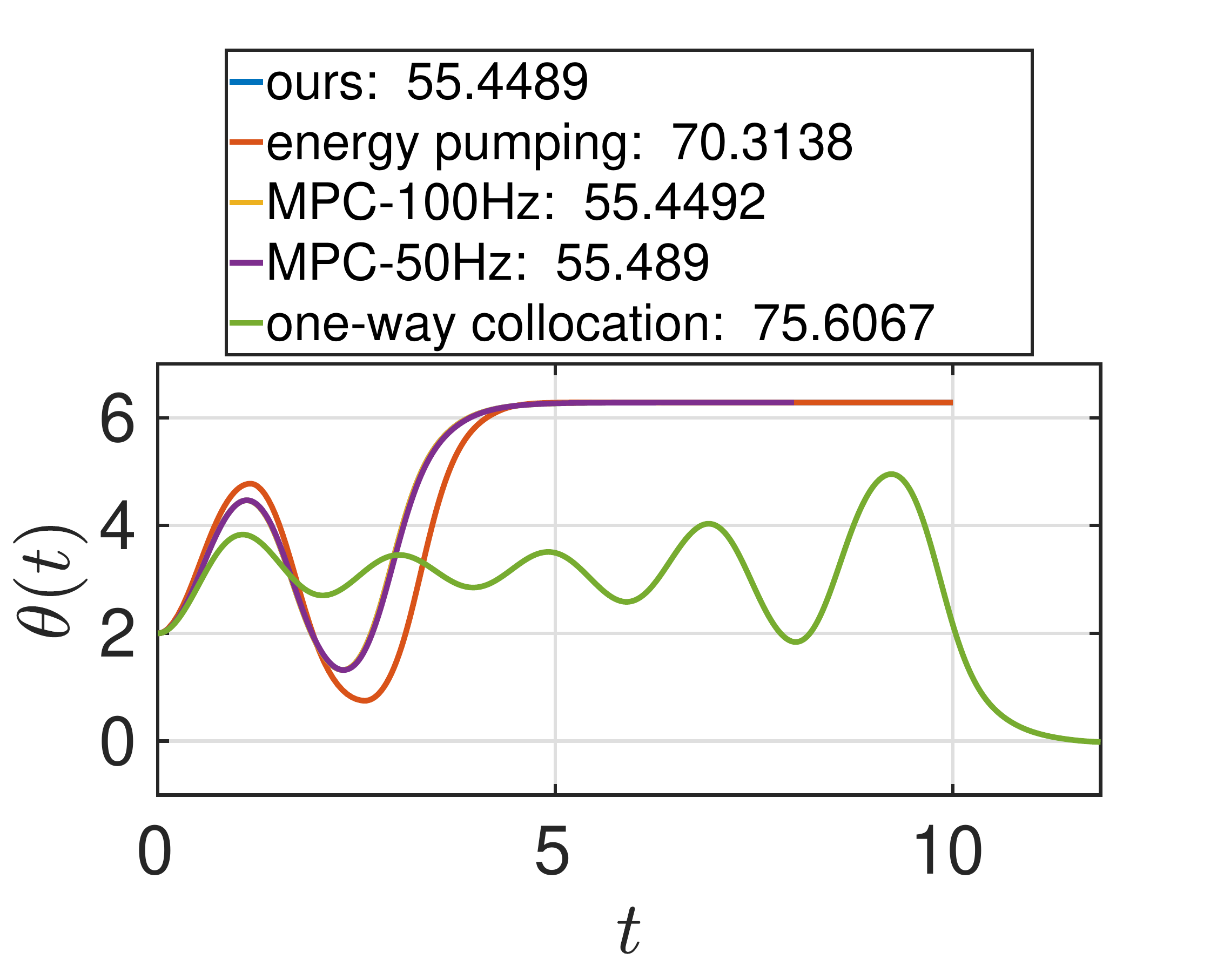}
			\end{minipage}
		\end{tabular}
	\end{center}
	\vspace{-6mm}
	\caption{Comparison of the optimal value function and controller with baselines. Left: the optimal value function sits above a polynomial lower bound. Middle: optimal controller achieves lower cost than a controller trained from PPO. Right: optimal controller achieves lower cost than energy pumping, and almost the same cost as MPC. \label{fig:compare}} 
	\vspace{-2mm}
\end{figure}



\vspace{-4mm}
\section{Neural Approximation}
\label{sec:neural-approximation}

The power of optimality comes at a price: there are $78,797$ raw PMP trajectories and $371,028,742$ value samples in the optimal value function of Fig.~\ref{fig:pendulum-value}(b). We investigate using a neural network $\Jnn(x)$ to distill knowledge from the PMP data. We use a neural network with 2 hidden layers each with $200$ neurons. The input to $\Jnn$ is $(\sin\theta, \cos\theta, \thetadot)$. We consider the case with control saturation.

{\bf Supervised Training}. We supervise $\Jnn(x)$ using data samples from $J^*(x)$ with the loss
\bea\label{eq:nn-loss}
\ell_{\supervise} = \lambda_{\lqr} \ell_{\lqr} + \lambda_{\Jvalue} \ell_{\Jvalue} + \lambda_{\hjb} \ell_{\hjb} + \lambda_{\smooth} \ell_{\smooth},
\eea
where $\ell_{\lqr}$ uses the local LQR value function $J_\infty(x)$ to supervise $\Jnn(x)$ around $\xur$; $\ell_\Jvalue$ uses random samples from $J^*(x)$ in Fig.~\ref{fig:pendulum-value} to supervise $\Jnn(x)$; $\ell_\hjb$ penalizes violation of the HJB residual~\eqref{eq:HJB}; and $\ell_\smooth$ encourages $\Jnn(x)$ to be smooth (more details in Appendix~\ref{app:sec:nn-loss-functions}). Fig.~\ref{fig:neural}(a) plots trained $\Jnn(x)$ and the induced controllers with decreasing samples used in $\ell_{\Jvalue}$. We see even with just $50$ value samples, the controller globally swings up and stabilizes the pendulum.

{\bf Weakly Supervised Training}.
The loss $\ell_\Jvalue$ requires $J^*(x)$ that is expensive to compute due to Algorithm~\ref{alg:nonsmooth-line}. We replace $\ell_\Jvalue$ with a loss that only requires \emph{raw} PMP trajectories
$$
\ell_\pmp = \frac{1}{N_{\pmp}} \sum_{i=1}^{N_\pmp} \mathrm{LeakyReLU}(\Jnn(x_i) - \pmp(x_i)),
$$
where $\pmp(x_i)$ indicates the value of $x_i$ along a given PMP trajectory. Choosing $N_{\pmp} = 100000$, we obtain $\Jnn(x)$ and its induced controller that globally stabilizes the pendulum in Fig.~\ref{fig:neural}(b). In Appendix~\ref{app:sec:cart-pole} we show the weakly supervised method generalizes to the $3$-dimensional cart-pole.

\begin{figure}[h]
	\begin{center}
		\begin{tabular}{cccc}
            \hspace{-5mm}	
            \begin{minipage}{0.27\textwidth}
				\centering
				\includegraphics[width=\textwidth]{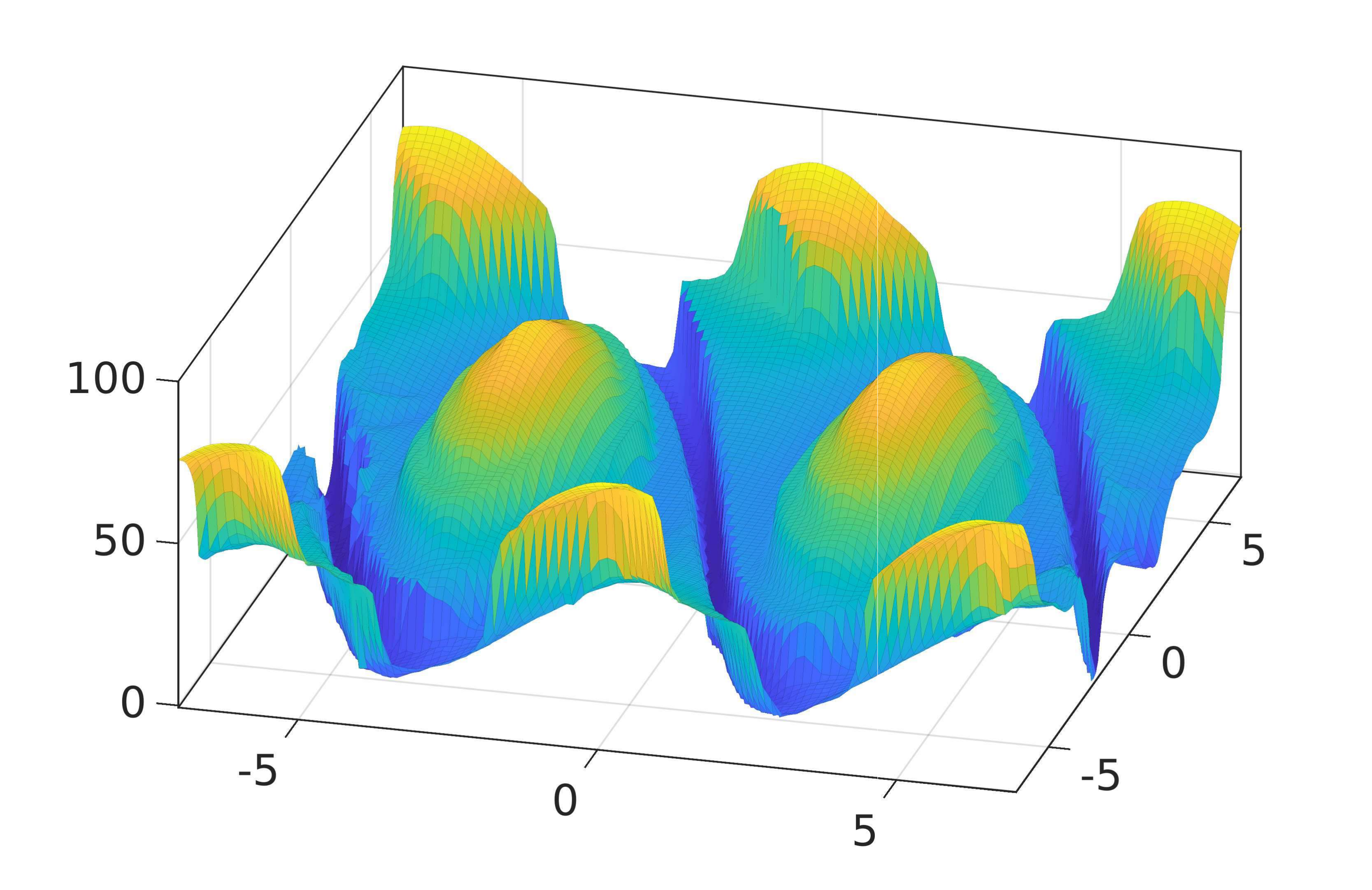}
			\end{minipage}
			&\hspace{-6mm}
			\begin{minipage}{0.27\textwidth}
				\centering
				\includegraphics[width=\textwidth]{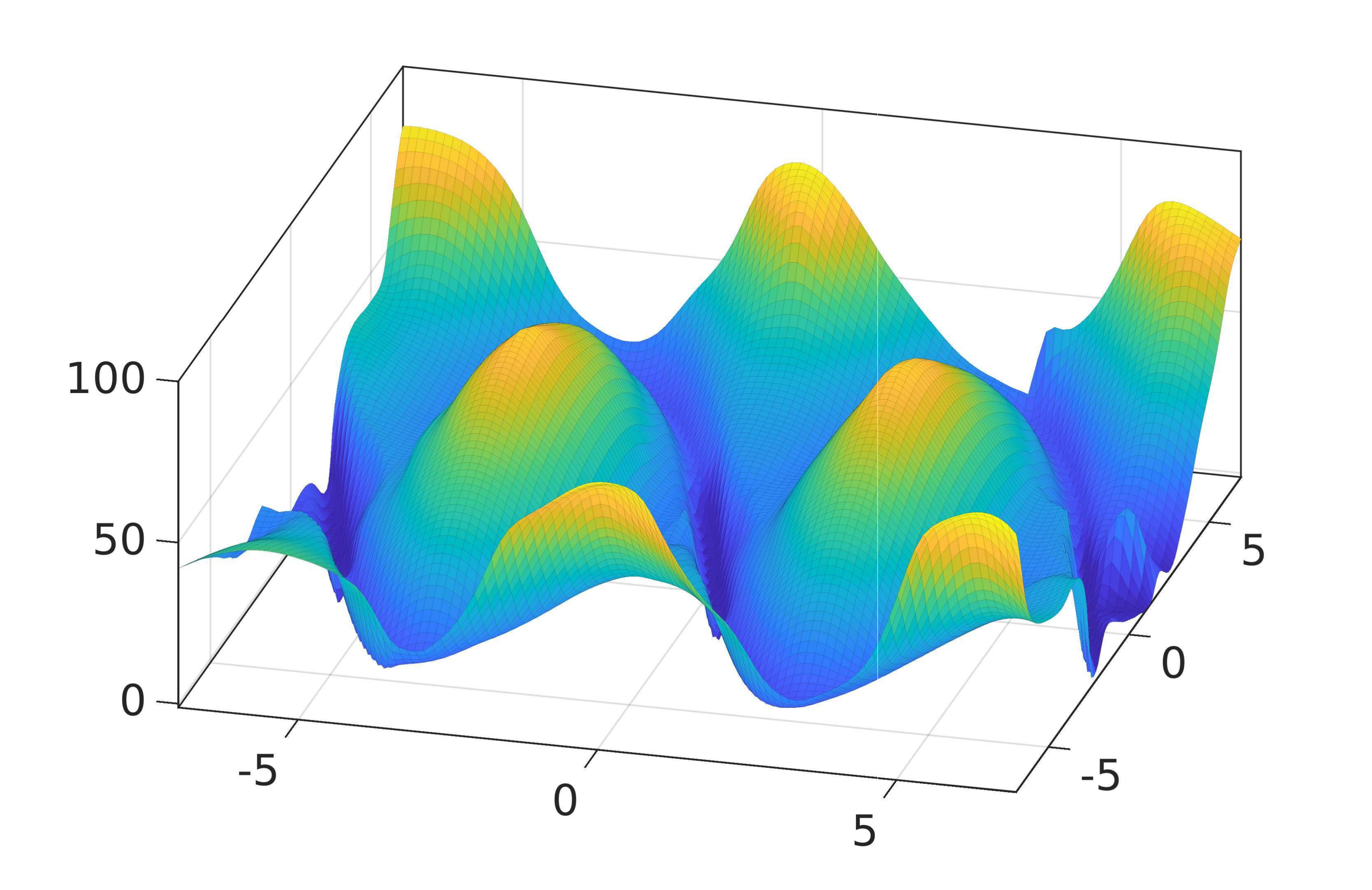}
			\end{minipage}
            &
            \hspace{-8mm}
			\begin{minipage}{0.27\textwidth}
				\centering
				\includegraphics[width=\textwidth]{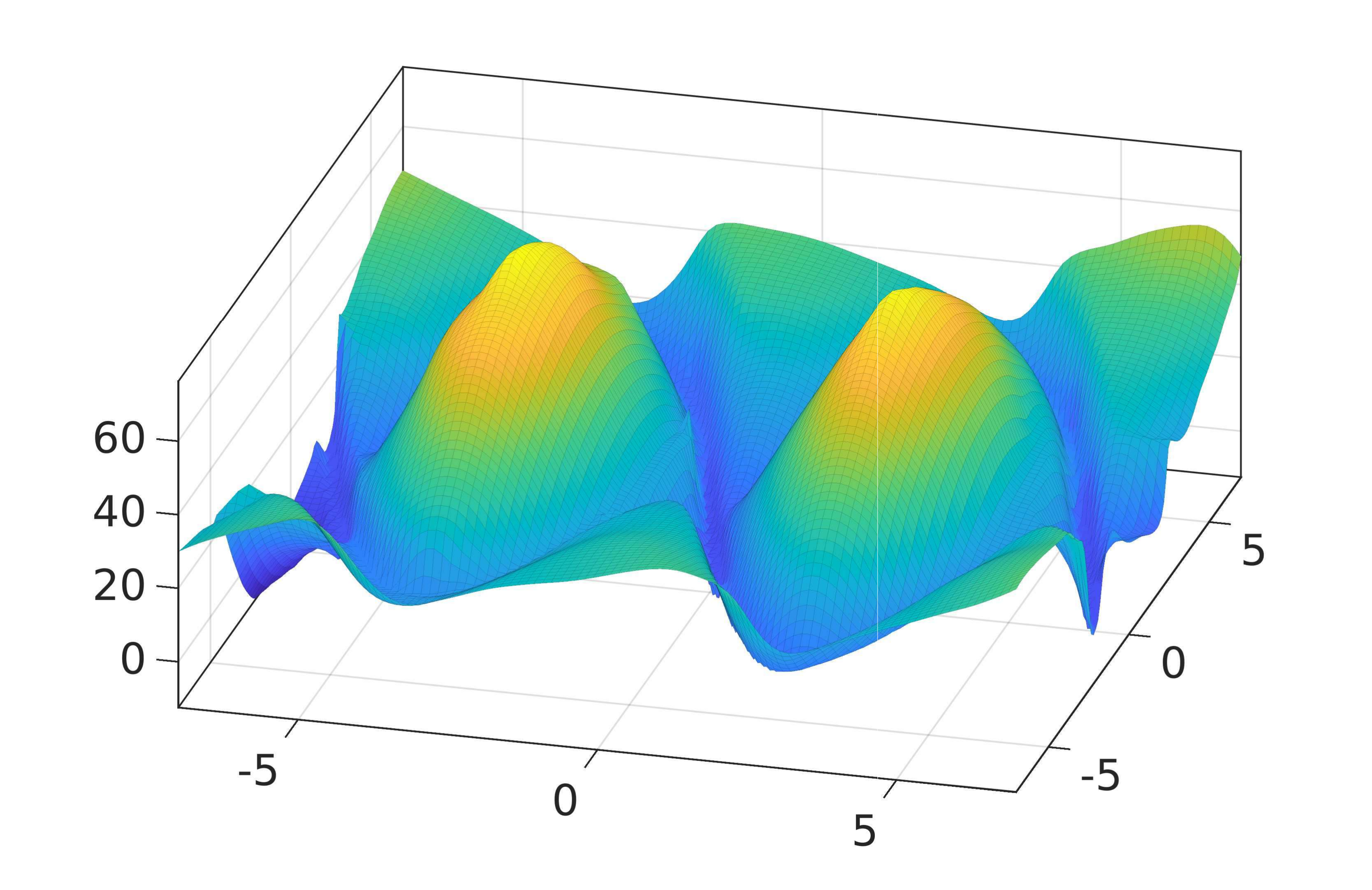}
			\end{minipage}
            &
            \hspace{-8mm}
			\begin{minipage}{0.27\textwidth}
				\centering
				\includegraphics[width=\textwidth]{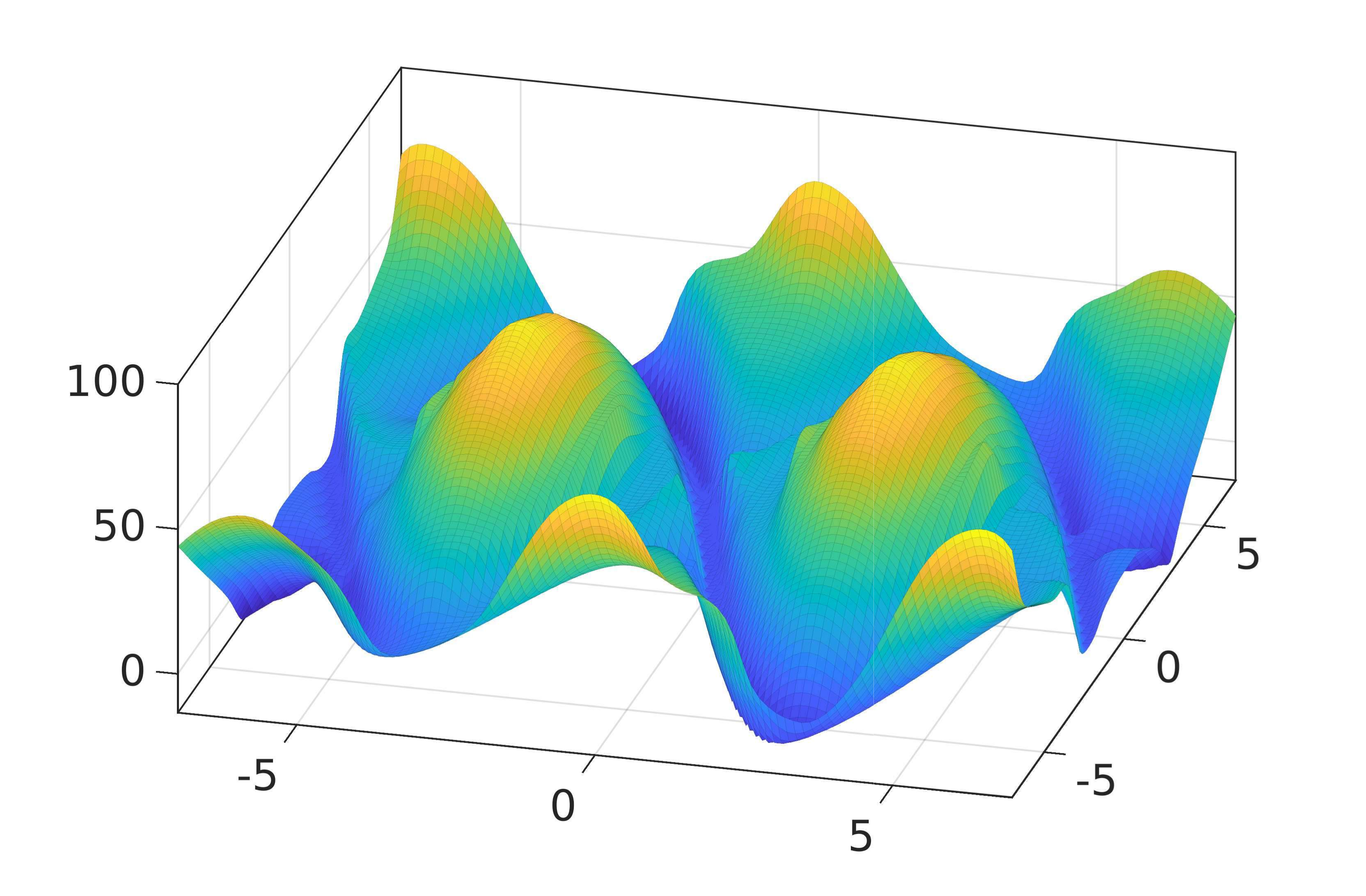}
			\end{minipage}
		\end{tabular}
\begin{tabular}{cccc}
            \hspace{-5mm}	
            \begin{minipage}{0.27\textwidth}
				\centering
				\includegraphics[width=\textwidth]{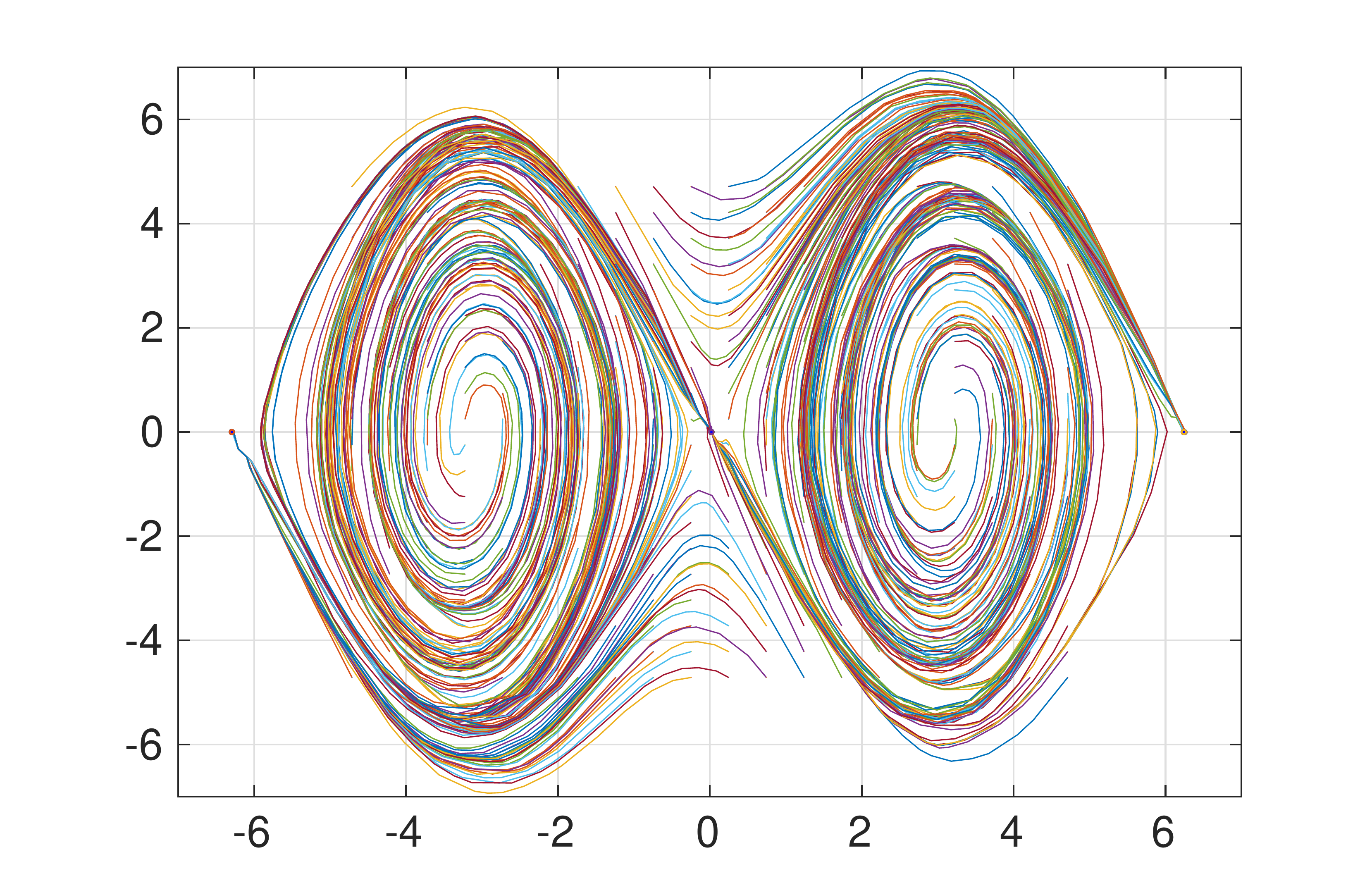}
			\end{minipage}
			&\hspace{-6mm}
			\begin{minipage}{0.27\textwidth}
				\centering
				\includegraphics[width=\textwidth]{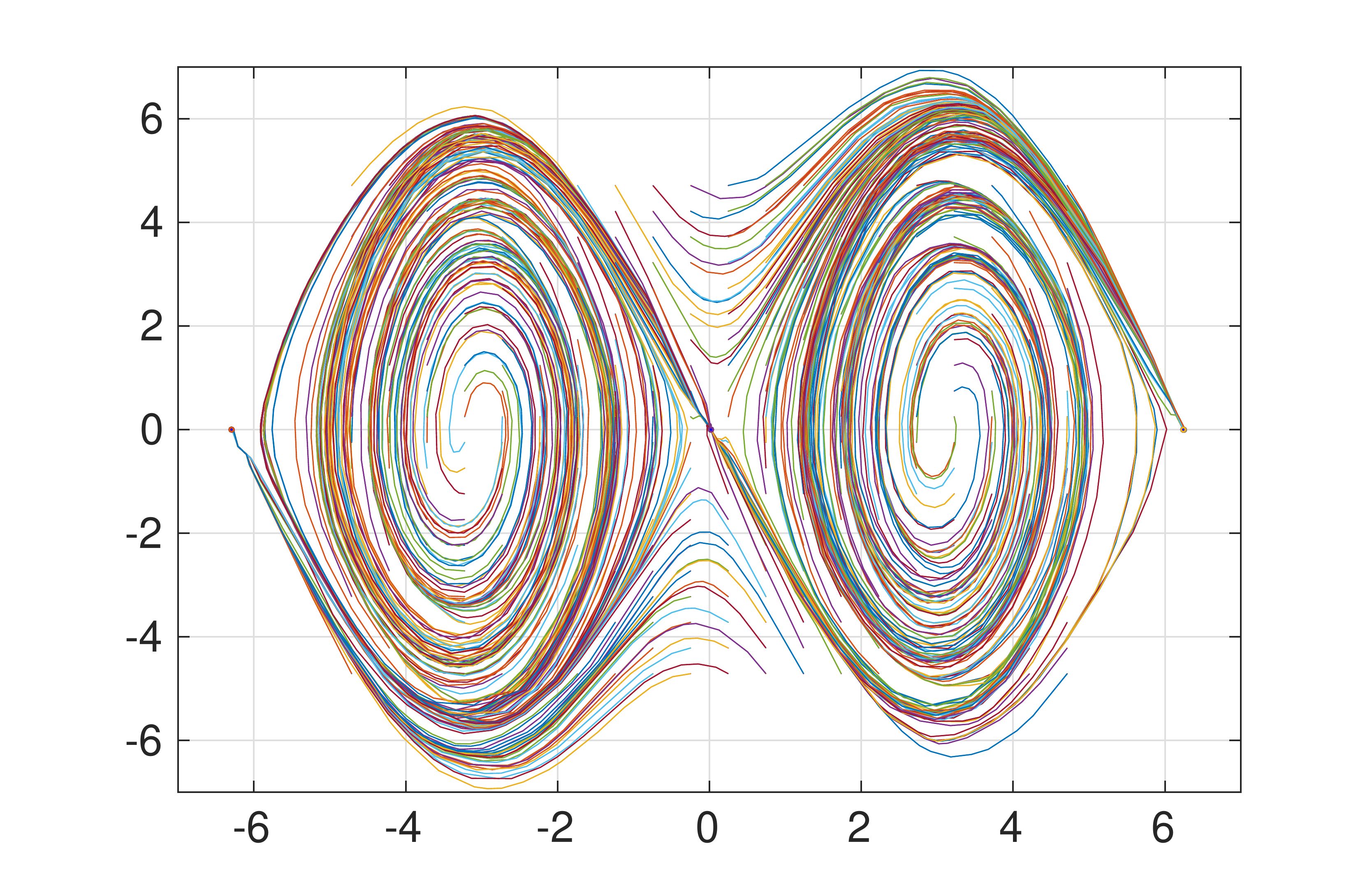}
			\end{minipage}
            &
            \hspace{-8mm}
			\begin{minipage}{0.27\textwidth}
				\centering
				\includegraphics[width=\textwidth]{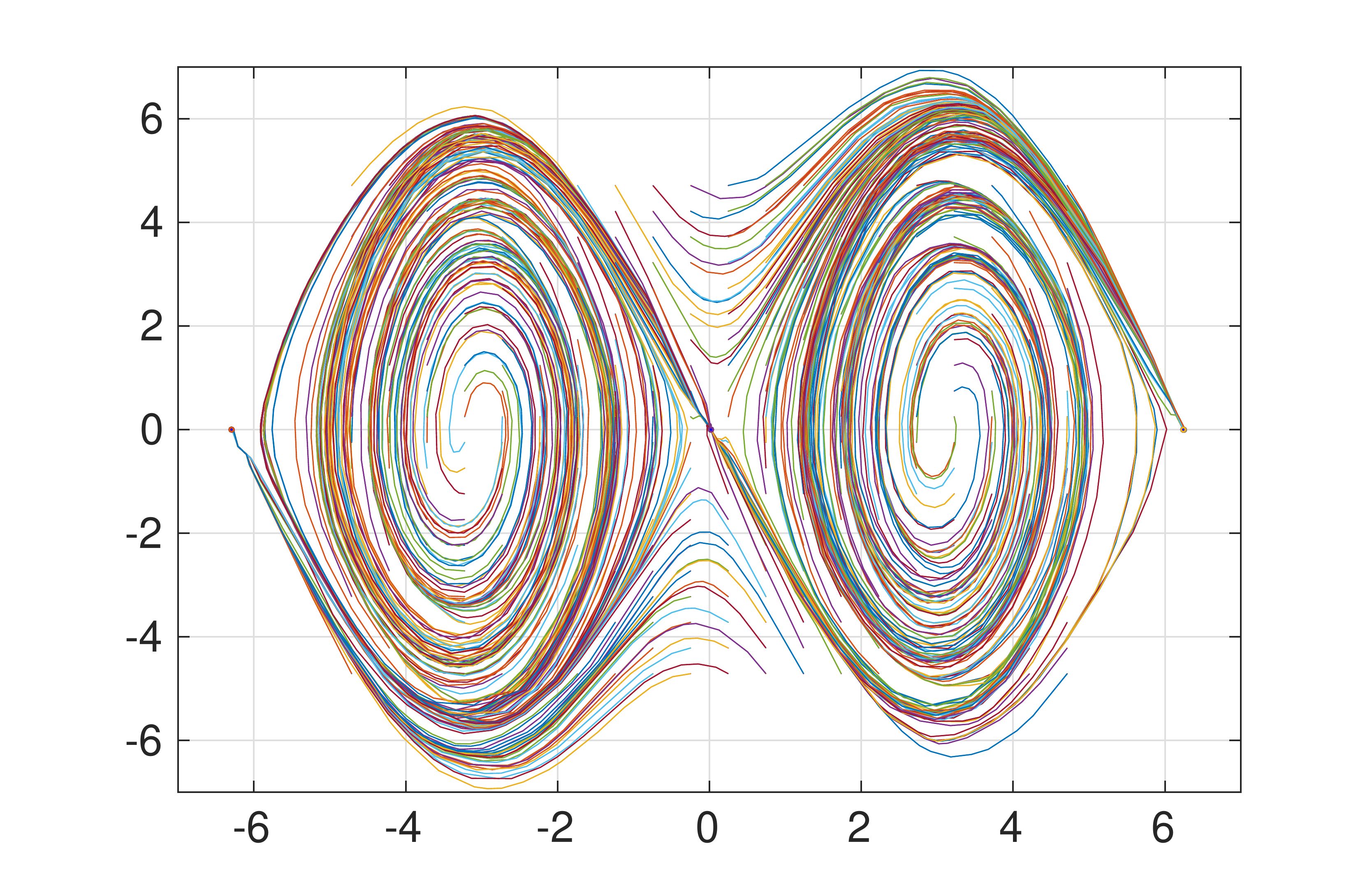}
			\end{minipage}
            &
            \hspace{-8mm}
			\begin{minipage}{0.27\textwidth}
				\centering
				\includegraphics[width=\textwidth]{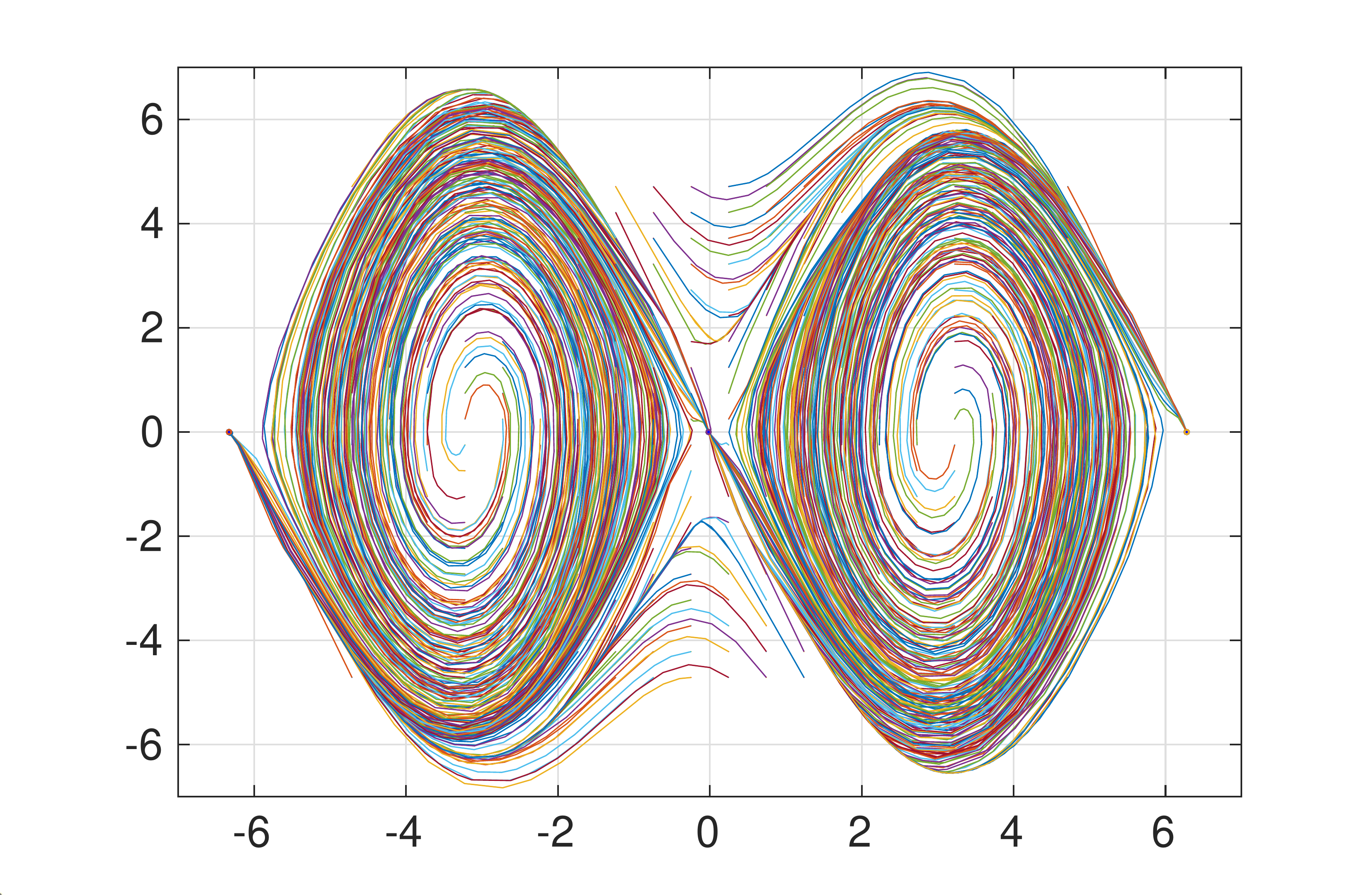}
			\end{minipage} \\
			\multicolumn{3}{c}{\smaller (a) Supervised training with, left to right, $1000$, $100$, and $50$ samples} & \hspace{-8mm} {\smaller (b) Weak supervision}
		\end{tabular}
	\end{center}
	\vspace{-6mm}
	\caption{Neural approximations of the optimal value function. (a) Supervised training with decreasing data samples. (b) Weakly supervised training with raw PMP trajectories.\label{fig:neural}} 
	\vspace{2mm}
\end{figure}



\vspace{-4mm}
\section{Conclusion}
We showed the optimal value function of infinite-horizon undiscounted pendulum swing-up is nonsmooth. Motivated by this, theoretically, we provide two results that certify the optimality and suboptimality of candidate value functions; algorithmically, we develop a numerical procedure based on backward solving PMP with local LQR terminal conditions to compute the true optimal value function up to minor numerical inaccuracies. The optimal value function outperforms other baseline algorithms and verified optimality. We demonstrate it is possible to learn simple and effective neural approximations of the optimal value function via either strong or weak supervision.

\acks{
  We thank Michael Posa, Jean-Bernard Lasserre, Jiarui Li, Yukai Tang, and Shucheng Kang for discussions about the optimal pendulum swing-up problem; Didier Henrion, Zexiang Liu, and Necmiye Ozay for pointing us to several related works; Lujie Yang and Alexandre Amice for help with computing polynomial lower bounds using SOS programming.
}

\appendix
\setcounter{equation}{0}
\setcounter{table}{0}
\setcounter{figure}{0}
\setcounter{theorem}{0}
\renewcommand{\theequation}{A\arabic{equation}}
\renewcommand{\theproposition}{A\arabic{proposition}}
\renewcommand{\thetheorem}{A\arabic{theorem}}
\renewcommand{\theassumption}{A\arabic{assumption}}
\renewcommand{\thefigure}{A\arabic{figure}}
\renewcommand{\thetable}{A\arabic{table}}


\section{Proof of Theorem~\ref{the:non-smooth}}
\label{sec:app:proof-nonsmoothnes}

\subsection{Existence and Uniqueness Theorem of ODE}
\begin{lemma}[Uniqueness Theorem of ODE]\label{the:uniqueness theory}
    Consider the initial value problem
    \beq
    \begin{cases}
    \displaystyle\frac{\partial y}{\partial x} = g(x,y)\\
    y(x_0)
 = y_0
\end{cases}.
    \eeq
    Assume $g(x,y)$ is Lipschitz continuous in $R : = \{(x,y) \mid \Vert y-y_0\Vert \leq b,\Vert x-x_0\Vert\leq a \}$, then there is only one solution $y(x)$ when $x \in I = [x_0-h,x_0+h]$, where
    $$M = \max_{(x,y) \in R}\Vert g(x,y)\Vert, \quad h = \min\{a,\frac{b}{M}\}$$
\end{lemma}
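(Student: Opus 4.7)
The plan is to rewrite the initial value problem as a Volterra integral equation and apply the Banach fixed-point theorem on a suitable complete function space. First, I would observe that $y$ is a $C^1$ solution of the ODE on $I$ if and only if $y$ is a continuous solution of the integral equation
$$y(x) = y_0 + \int_{x_0}^{x} g(s, y(s))\, ds,$$
where the equivalence follows from the fundamental theorem of calculus together with the continuity of $g$ (continuity is a consequence of Lipschitz continuity on the compact rectangle $R$).

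Next, I would set up the functional framework: take $X$ to be the set of continuous functions $y : I \to \Real{}$ satisfying $\Vert y(x) - y_0 \Vert \leq b$ for every $x \in I$, endowed with the uniform norm $\Vert \cdot \Vert_\infty$. Since $X$ is a closed subset of a Banach space, it is itself complete. Define the Picard operator $T$ on $X$ by $(Ty)(x) = y_0 + \int_{x_0}^{x} g(s, y(s))\, ds$. To show $T$ maps $X$ into itself, I would estimate
$$\Vert (Ty)(x) - y_0 \Vert \;\leq\; \int_{x_0}^{x} \Vert g(s, y(s)) \Vert\, ds \;\leq\; M |x - x_0| \;\leq\; M h \;\leq\; b,$$
where the final inequality invokes the hypothesis $h \leq b/M$; this is also what keeps the graph of every iterate inside the rectangle $R$ where the Lipschitz bound is valid.

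To produce a unique fixed point, I would use the Lipschitz constant $L$ of $g$ in the $y$-variable and iterate $T$. A standard induction yields
$$\Vert (T^n y_1)(x) - (T^n y_2)(x) \Vert \;\leq\; \frac{(L|x-x_0|)^n}{n!}\, \Vert y_1 - y_2 \Vert_\infty,$$
so for all sufficiently large $n$ the operator $T^n$ is a contraction on $X$, regardless of how large $Lh$ happens to be. The Banach fixed-point theorem then supplies a unique fixed point $y^\star \in X$ of $T$ (and hence of every $T^n$), which by the first step is the unique solution of the ODE on $I$.

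I expect the main obstacle to be the bookkeeping that guarantees every Picard iterate stays inside $R$, so that the Lipschitz estimate may legitimately be applied at each step. This is exactly the role of the choice $h = \min\{a, b/M\}$: the condition $h \leq a$ keeps the iterate's domain inside the horizontal slab of $R$, while $h \leq b/M$ keeps its range inside the vertical slab, and without both one cannot close the argument.
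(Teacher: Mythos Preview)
Your argument is the standard Picard--Lindel\"of proof and is correct: the integral reformulation, the choice of the closed ball $X$ in $C(I)$, the invariance $T(X)\subset X$ via $h\le b/M$, and the contraction of some iterate $T^n$ via the $\frac{(L|x-x_0|)^n}{n!}$ bound all go through as you describe. One small wording point: you assume Lipschitz continuity on all of $R$ and use it only in the second variable, which is what the contraction step needs; you might state explicitly that the constant $L$ is the Lipschitz constant in $y$.

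As for comparison with the paper: there is nothing to compare. The paper states this lemma as classical background in the appendix and does not supply a proof of its own; it is simply quoted (and in fact is not even explicitly invoked in the subsequent argument for Theorem~\ref{the:non-smooth}). Your write-up is therefore strictly more than what the paper provides.
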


\subsection{Proof of Theorem ~\ref{the:non-smooth}}

\begin{proof}
    The proof is organized as follows: Firstly we show that if the value function satisfies the HJB equation at $x(0) = \xbr$ then a unique optimal controller $u(0)$ can be solved. Secondly, we show for $\xbr$ there are two equivalent controls $u(0)$ and $-u(0)$, and furthermore, $u(0) = 0$ is impossible, thus leading to a contradiction.

    {\bf Contradiction between strong convexity and symmetry}. Assuming the optimal value function $J$ is $C^1$ everywhere near $\xbr$, then due to the Bellman Principle using dynamics programming, the value function $J$ satisfies HJB equation \eqref{eq:HJB} everywhere near a neighborhood $\mathcal{B}$ of $\xbr$. Now in \eqref{eq:HJB} if we fix $x$, and considering $f$ is affine in $u$, the objective of the optimization in~\eqref{eq:HJB}
    $$F(u) = q_1 \sin^2 \theta + q_1 (\cos \theta -1 )^2 + q_2 \thetadot^2+  r u^2 +  \frac{\partial J}{\partial x}\tran f(x,u)$$
    is a strongly convex function, and $\controlset$ is a convex set, so the HJB equation has a unique solution $u^*$.
    Actually we can solve out $u$ explicitly as in~\eqref{eq:solution-u}.

    But if an optimal trajectory $(\theta(t),\thetadot(t))$ and an optimal control $u(t)$ exist, then $(-\theta(t),-\thetadot(t))$ and $-u(t)$ are also optimal due to symmetry of the pendulum problem shown in Fig.~\ref{fig:pendulum}. Thus $u(0)$ and $-u(0)$ are both optimal controls at $\xbr$. Then we only need to show $u(0)$ is not zero. We plug in $u = 0$ into $F(u)$ when $\theta = \pi$ and $\dot\theta = 0$, then we get $F(0) = 4q_1 \neq 0$.
    This conflicts with the HJB equation.



    But now there are two nonzero optimal controllers: $u(0)$ and $-u(0)$, which is contradictory to the HJB equation. Therefore, at $\xbr$ the optimal value function must be nonsmooth.
\end{proof}

\section{Proof of Theorem~\ref{the:optimality}}
\label{sec:app:proof-optimality}

\begin{proof}
The proof is organized as follows: Firstly we prove two observations that $x(t)$ is piece-wise $C^1$ and all the intersection points between any admissible trajectory and the nonsmooth curve $\nonsmoothregion$ is a closed set. Secondly, we prove the intersection points parameterized by $t$ will be either an interval or a single point. Finally, we apply the HJB equation to every interval and show the piece-wise $C^1$ function is a lower bound to the optimal value function, whose optimality then follows from the attainability condition (vii).

First we observe $x(t)$ is piece-wise $C^1$. 
The dynamics is
$$
\dot x = f(x,u).
$$
Because $u(t)$ is piece-wise continuous, $f$ is continuous, so on every interval that $u(t)$ is continuous, $\xdot$ is continuous and $x(t)$ is $C^1$.


Without loss of generality, assume $x(t)$ is always $C^1$ on $t$ (we will discuss piece-wise $C^1$ later) and the intersection of $x(t)$ with the nonsmooth curve $\nonsmoothregion $ is 
$$
\{x(t)|t \in I\}.
$$

Next we prove $I$ is a closed set on $\Real{1}$, \ie for every converging sequence the limit is in set $I$. For a converging sequence we only need to consider a uniform neighborhood $\local$ of the limit. From $\nonsmoothregion  : \{G(x) = 0|x\in \local\}$,  we know $\{x(t)|t \in I, x(t)\in\local\} = \{x(t)|G(x(t))=0, x(t)\in \local\}$. Since $G$ and $x$ are both $C^1$, and $C^1$ function's zero point set is closed, we know $I$ is closed.


We define a ``switching point'' (S-point) in $x(t)$: a point $x(t_0)$is called an S-point if
$$
\forall \epsilon>0,\exists t_1,t_2\in [t_0-\epsilon,t_0+\epsilon],s.t.~t_1\in I,t_2\notin I
$$
It is obvious that all S-points~$x(t_s)$ satisfy $t_s\in I$: because $I$ is a closed set, and we choose $\epsilon_n\rightarrow 0$, we get a sequence converging to  $x(t_s)$.

S-point actually contains both pathological 'bad'\footnote{Here 'bad' means it cross $\nonsmoothregion$ in a pathological way, like $sin(\frac{1}{x})$. And 'good' means what we really want: the endpoints of intervals or single point.} points and 'good' points. There are two kinds of good points: (i) the interval endpoints and (ii) a single point. We want to prove the S-points can be sorted on $\Real{1}$, so there will only be good points.

\begin{lemma}[Finite number of S-points]\label{the:finite-S}
    For $t\in[-R,R]$, $R \in \bbN$, there are only a finite number of S-points.
\end{lemma}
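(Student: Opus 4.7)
The plan is to argue by contradiction, combining the Bolzano--Weierstrass theorem with the monotonicity condition (vi) of Theorem~\ref{the:optimality}. Suppose, for contradiction, that there are infinitely many S-points in $[-R,R]$. Since $[-R,R]$ is compact, this infinite set admits an accumulation point $t^* \in [-R,R]$, and there is a sequence of distinct S-points converging to $t^*$.

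First I would show that $t^* \in I$. Every S-point lies in $I$ (already observed immediately after the definition), and $I$ was just proved to be closed, so the limit $t^*$ of any sequence of S-points also lies in $I$; hence $G(x(t^*)) = 0$. This lets me invoke condition (vi) of Theorem~\ref{the:optimality}, which guarantees that $G(x(t))$ is monotonic on some open neighborhood $(t^* - \delta, t^* + \delta)$ of $t^*$.

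The key structural observation is that the zero set of a monotonic real-valued function is always either empty, a single point, or a closed interval. Since $G(x(t^*)) = 0$, the set $Z := \{t \in (t^* - \delta, t^* + \delta) : G(x(t)) = 0\}$ is nonempty, so $Z$ is either $\{t^*\}$ or a closed subinterval containing $t^*$. I would then verify that at most the endpoints of $Z$ can be S-points: an interior point of $Z$ has $G \circ x \equiv 0$ in a small neighborhood and therefore fails the S-point definition, while any $t \notin Z$ inside $(t^* - \delta, t^* + \delta)$ has $G(x(\cdot)) \neq 0$ in a small neighborhood (by continuity) and is likewise not an S-point. Hence $(t^* - \delta, t^* + \delta)$ contains at most two S-points, which directly contradicts $t^*$ being an accumulation point of S-points.

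The main subtlety I expect to have to address carefully is the weakly-monotonic case: when $G \circ x$ is constant on a subinterval rather than strictly monotonic, the zero set is a genuine interval, and one has to rule out any hidden accumulation by correctly identifying that only its two endpoints qualify as S-points. The strictly-monotonic subcase is immediate since the zero is automatically isolated. The extension from the assumed $C^1$ trajectory to the full piece-wise $C^1$ case then follows by applying the lemma on each of the finitely many $C^1$ pieces of $x(t)$ in $[-R,R]$ and summing the counts.
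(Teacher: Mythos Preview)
Your proposal is correct and follows essentially the same strategy as the paper: assume infinitely many S-points, extract an accumulation point $t^*$ via Bolzano--Weierstrass, note that $t^* \in I$, and then use the monotonicity condition~(vi) on $G(x(t))$ near $t^*$ to derive a contradiction. Your packaging via the observation that the zero set of a monotonic real function is an interval (so at most its two endpoints can be S-points) is slightly cleaner than the paper's more hands-on argument, but the substance is identical.
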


\begin{proof}
First, if there are infinitely many S-points we will find a special one and prove it cannot be a good point. Then, we use proof by contradiction from condition (vi).

Assume there are infinitely many S-points, since $[-R,R]$ is a bounded set, so these points have at least one limit point, \ie $x(s_i)\rightarrow x(s)$, $x(s_i)$ are different S-points. $x(s)$ satisfies
\beq\label{eq:proof-2-1}\forall \epsilon,\exists s_1,s_2\in [s-\epsilon,s+\epsilon]/\{s\},s.t.~s_1\in I,s_2\notin I
\eeq

This equation has a punctured neighborhood because we can always find an S-point different from $x(s)$ in an arbitrary small neighborhood.

Note that there must be both infinite number of $x(t) \in I$ and $x(t) \notin I$ on one side of $x(s)$: we can take sequence $x(s_i)$ approaching $x(s)$ from one side, let's say $s_i\rightarrow s^+$. So for $\forall~\epsilon>0$, $\exists s_i \neq s$, $s.t.~s_i \in (s,s+\epsilon]$, then for $\delta = \frac{1}{2}\min\{s_i-s,s+\epsilon-s_i\}$, $\exists~s_1,s_2 \in [s_i-\delta,s_i+\delta]~s.t.~s_1\in I,s_2\notin I$. This means we can choose $s_i$ from one side of $s$ in \eqref{eq:proof-2-1}.

Actually punctured neighborhood is aimed to exclude the case that $x(s)$ is a second-type good point, and $s_i$ from one side is aimed to exclude first-type good point. Now $s$ is not a good point, and then we show the contradiction.

As mentioned above, $\nonsmoothregion $ can be locally written as $\nonsmoothregion  : \{G(x) = 0|x\in \local\}$, so on $x(t)$ we get $\nonsmoothregion  : \{G(x(t)) = 0|x(t)\in \local\}$, $G(x(\cdot))$ is a $C^1$ function, and we have assumed it is monotonic near $s$. Without loss of generality, it is increasing in $t \in [s-h,s+h]$, and $\forall 0<h_\epsilon\leq h$, $\exists s_1,s_2 \in (s,s+h_\epsilon) ~s.t. ~s_1\in I,s_2\notin I$. For $h_\epsilon:=h , s_{\mathrm{in}}\in I, s_{\mathrm{in}} < h$, we get $G(x(s)) = 0,~G(x(s_{\mathrm{in}})) = 0$ so $G(x(t))\equiv 0, t\in[s,s_{\mathrm{in}}]$.
But for $h_\epsilon:=s_{\mathrm{in}}-s,s_{\mathrm{noin}}\in I, s_{\mathrm{noin}} < s_{\mathrm{in}}$, so $G(x(s_{\mathrm{noin}}))\neq 0$, which induces the confliction. 
\end{proof}

So there are a finite number of S-points in $[-R,R]$. Pushing $R\rightarrow \infty$ makes sortable number of S-points in $\Real{1}$. We sort these points as $...,T_{-1},T_1,T_2,...$ and in each open interval $(T_i,T_{i+1})$, we prove that this interval is either all on $I$ or all not on $I$.

\begin{proof}
Proof by contradiction: Because every point is not an S-point, we get:
$$
\forall t \in (T_i,T_{i+1}), \exists \epsilon_t, s.t. \forall s \in [t-\epsilon_t,t+\epsilon_t], s~\mathrm{is~either~on~or~not~on~} I.
$$ 
Assume there are two points $T_{\mathrm{in}} \in I$ and $T_{\mathrm{noin}} \notin I$, for $[T_{\mathrm{in}},\frac{T_{\mathrm{in}}+T_{\mathrm{noin}}}{2}]$ and $[\frac{T_{\mathrm{in}}+T_{\mathrm{noin}}}{2}, T_{\mathrm{noin}}]$, there must be at least one interval that has points both in and not in $I$. Do this repeatedly, and it will converge to a point, this point is a S-point by definition.
\end{proof}

For $x(t)$ that is piece-wise $C^1$, we can add the nondifferentiable point in $\{T_i|i\in \bbZ\}$ and use the same result above in every interval.


Now every open interval is either all on or all not on $\nonsmoothregion $. For those who are not on $\nonsmoothregion $ it stays in one $\smoothregion_i $ (otherwise it will touch $\nonsmoothregion$), so HJB holds everywhere. For those who lie on $\nonsmoothregion $, it means that on that point, there exists unique $(i,j)$, $J_i(x) = J_j(x) = J(x)$, so $x \in \smoothregion_i $. This is because condition (ii): $\smoothregion_i  \cap \smoothregion_{i+v} = \emptyset,|v| > 1$, so the pair $(i,j)$ is unique. Now on each interval $[T_i, T_{i+1}]$ we have (a) $u(t)$ is continuous, (b) HJB equation holds everywhere.

From
$$\min_{u \in \controlset}c(x,u)+\frac{\partial J}{\partial x}\tran f(x,u) = 0.\quad \forall x \in \smoothregion_i$$
we get
$$c(x,u)+\frac{\partial J}{\partial x}\tran f(x,u) \geq 0,\quad \forall x\in \smoothregion_i ,\quad \forall u \in \controlset$$
Note that $J$ is continuous and $x(t)$ is continuous (by definition), so $J(x(t))$ is continuous. 

Thus,
$$\frac{dJ}{dt} = \frac{dJ_i}{dt} = \frac{\partial J_i}{\partial x}\tran \dot x\geq -c(x(t),u(t)),\quad \forall t \in [T_i,T_{i+1}]$$ 
Note that $\displaystyle\frac{\partial J}{\partial x}$ may not always exist on the trajectory, \eg when it is along the nonsmooth curve. But $\displaystyle\frac{dJ}{dt}$ exists because a directional derivative exists. For all admissible trajectory $x(t)$ and corresponding $u(t)$, we do integration from $0$ to $\infty$:
because $\displaystyle\frac{dJ}{dt}$ exists everywhere on $[T_i,T_{i+1}]$ and it is Lebesgue integrable (easy to check for $C^1$ value function $J$), then using the Newton-Leibniz equation, we get:
$$
J(x(T_{i+1})) - J(x(T_i)) = \int_{T_i}^{T_{i+1}}\frac{dJ}{dt}dt
$$

$$\begin{aligned}\int _{0}^{T}\dfrac{dJ}{dt}dt&=\sum _{i}\int _{T_{i}}^{T_{i}+1}\dfrac{dJ}{dt}dt\\
 &=\sum _{i}J| _{T_{i}^+}^{T_{i+1}^-}\\
&=J\left( x\left( T\right) \right) -J\left( x\left( 0\right) \right) \end{aligned}$$
Let $t \rightarrow \infty$ we have 
$$\begin{aligned}J\left( x\left( +\infty \right) \right) -J\left( x\left( 0\right) \right)& =\int ^{+\infty }_{0}\dfrac{dJ}{dt}dt\\
&\geq -\int _{0}^{+\infty }c\left( x\left( t\right) ,u\left( t\right) \right) dt\end{aligned}$$
$$i.e., \quad J(x_0) \leq \int _{0}^{+\infty }c\left( x\left( t\right) ,u\left( t\right) \right) dt.$$
This shows that $J$ is a lower bound to the optimal value function $J^*$. Therefore, if $J$ can actually be attained by some admissible controller, then the optimality of $J$ is verified. Condition (vii) requires $J$ is indeed attainable, which is easy to realize from our numerical method in Section~\ref{sec:approach}. 
\end{proof}
\section{Proof of Theorem~\ref{the:suboptimality}}
\label{sec:app:proof-suboptimality}
\begin{proof}
The proof is similar to that of Theorem~\ref{the:optimality}, but because we assume $J$ is $C^1$ we need not split trajectories into multiple intervals.

We use the notation $u^*_{opt}(t)$ as the true optimal control that we never know, and $u^*_{sopt}(t)$ as the suboptimal control we calculate from $J(x)$.

If we integrate \eqref{eq:subop} for an arbitrary $u(t)$, $t \in [0,T_x]$ we have
$$\begin{aligned}J\left( x\left(T_x \right) \right) -J\left( x\left( 0\right) \right)& =\int ^{T_x }_{0}\dfrac{dJ}{dt}dt\\
&\geq -\left (\int _{0}^{T_x}c\left( x\left( t\right) ,u\left( t\right) \right) - l\left( x\left( t\right)\right) dt\right )\end{aligned}$$
$$\ie \quad J(x_0)- J(x_0(T_x)) \leq \int _{0}^{T_x }c\left( x\left( t\right) ,u\left( t\right) \right) dt - \int _{0}^{T_x }l\left( x\left( t\right)\right) dt$$
plug in the optimal policy $u^*_{opt}(t)$ we get 
$$
\quad J(x_0) - J(x_0(T_x)) \leq J^*(x_0)- J^*(x_0(T_x))  + \left\vert \int _{0}^{T_x }l\left( x^*\left( t\right)\right) dt \right\vert
$$
Rearranging terms we have 
\bea
    J(x_0)- J^*(x_0)  \leq   \left\vert \int _{0}^{T_x }l\left( x^*\left( t\right)\right) dt \right\vert + J(x_0(T_x)) - J^*(x_0(T_x)) \nonumber \\
    = \left\vert \int_{0}^{T_x} l(x^*(t)) dt \right\vert + \underbrace{J(x_0(T_x)) - J_\infty(x_0(T_x))}_{\leq \delta} + 
    \underbrace{J_\infty(x_0(T_x)) - J^*(x_0(T_x))}_{\leq J_\infty(x_0(T_x))\leq \varepsilon} \nonumber \\
    \leq \left\vert \int _{0}^{T_x }l\left( x^*\left( t\right)\right) dt \right\vert + \varepsilon + \delta \nonumber
\eea
We can use the mean value theorem of integral to bound the first term. Then,
$$0\leq J(x_0)- J^*(x_0)  \leq   \epsilon T_x + \delta + \varepsilon,
$$
which concludes the proof.
\end{proof}

\section{Sampling Algorithm}
\label{sec:app:sampling}

\begin{figure}[t]
	\vspace{-14mm}
	\begin{center}
		\begin{tabular}{cccc}
            \hspace{-5mm}	
            \begin{minipage}{0.14\textwidth}
				\centering
				\includegraphics[width=\textwidth]{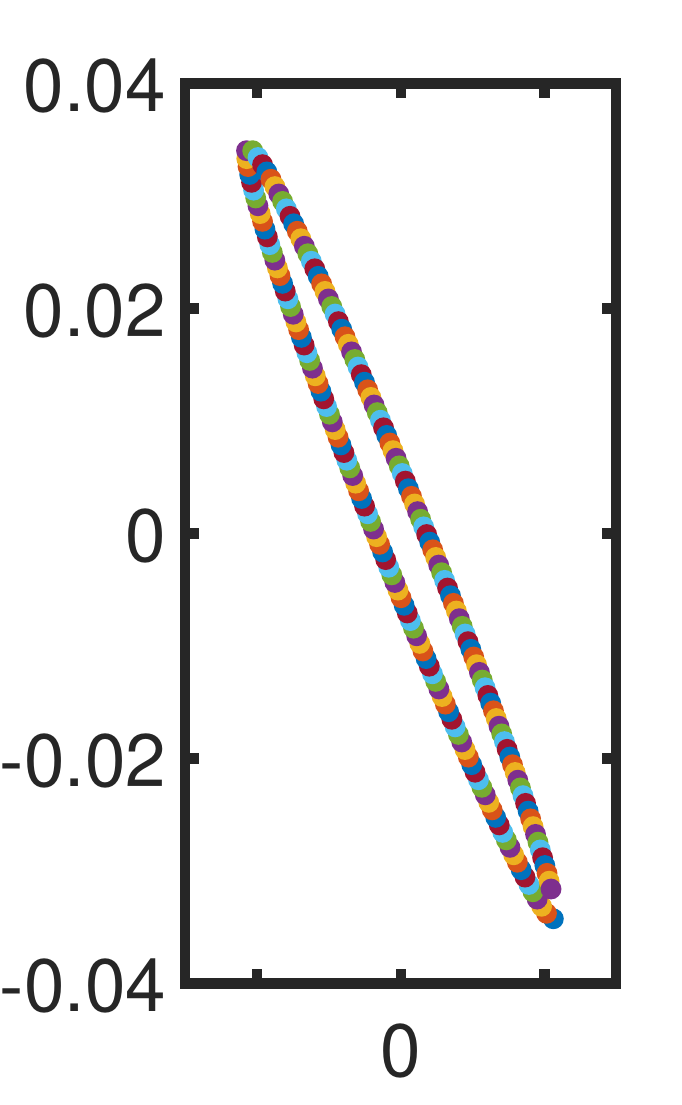}
			\end{minipage}
			&\hspace{-6mm}
			\begin{minipage}{0.38\textwidth}
				\centering
				\includegraphics[width=\textwidth]{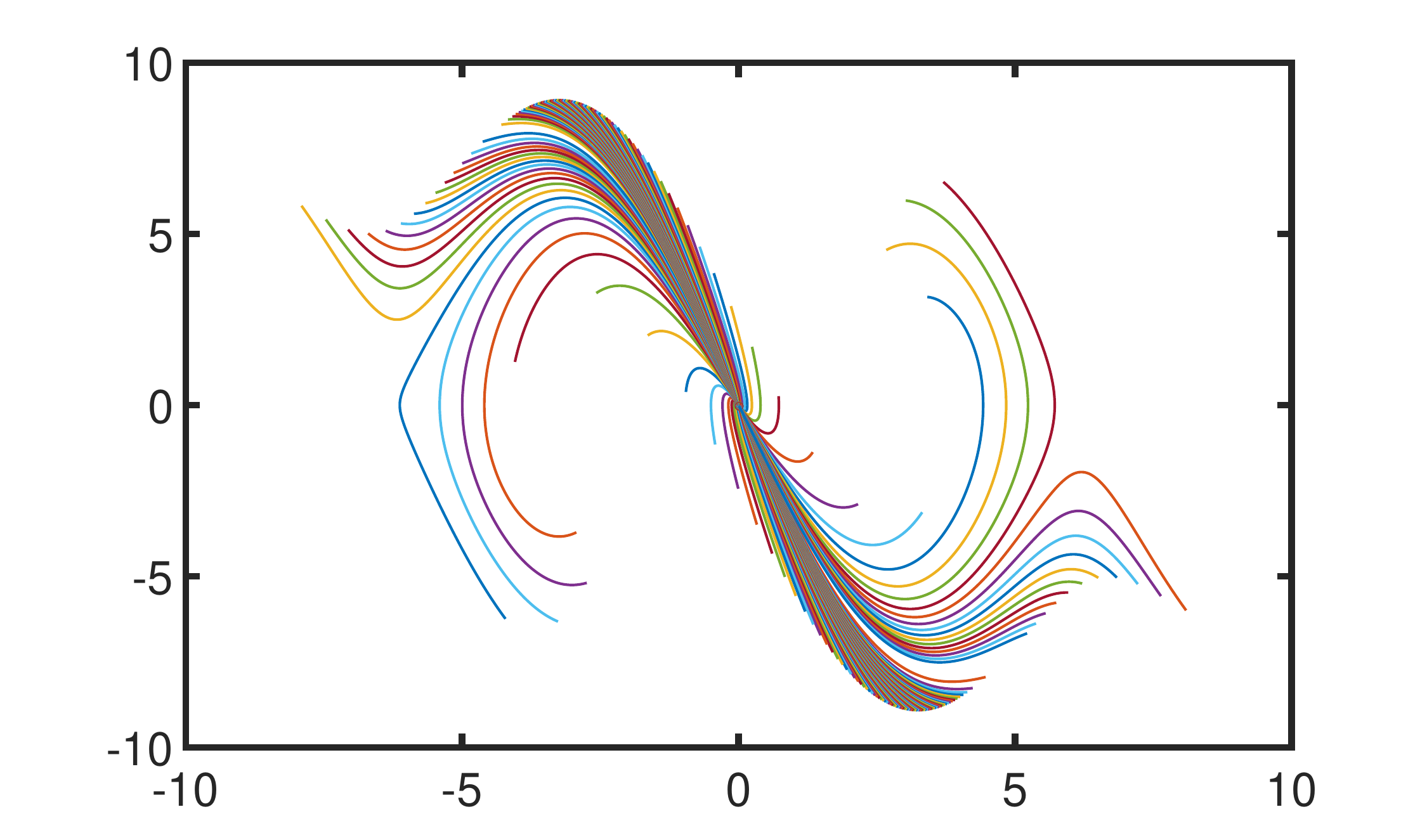}
			\end{minipage}
            &
            \hspace{-6mm}
			\begin{minipage}{0.14\textwidth}
				\centering
				\includegraphics[width=\textwidth]{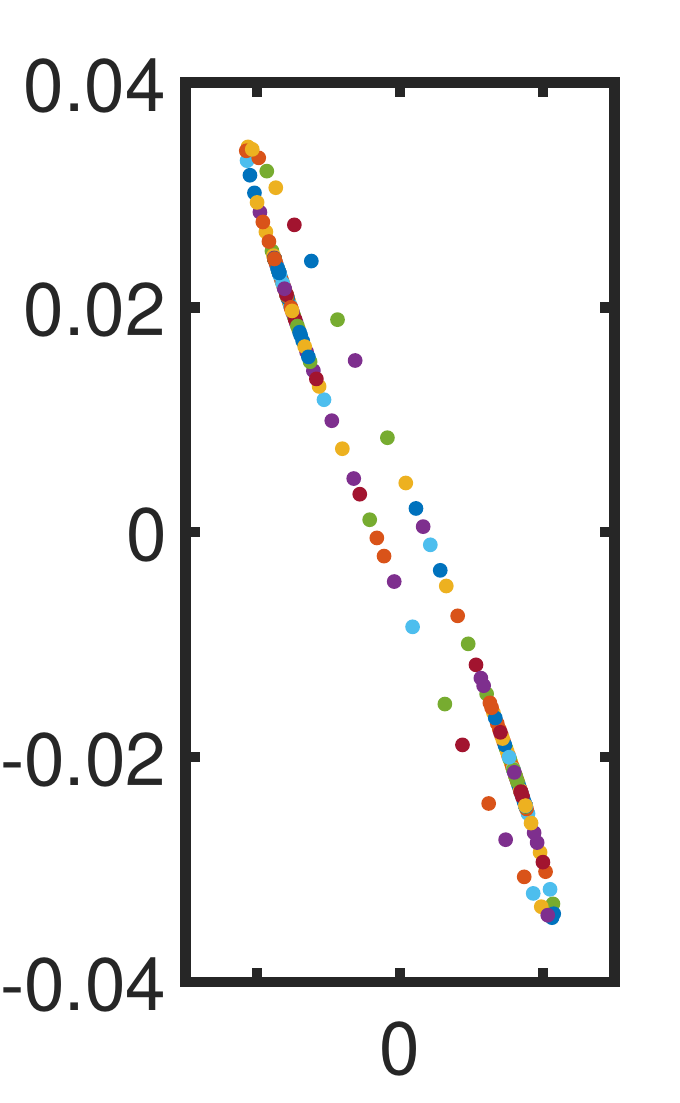}
			\end{minipage} 
			&
            \hspace{-6mm}
			\begin{minipage}{0.38\textwidth}
				\centering
				\includegraphics[width=\textwidth]{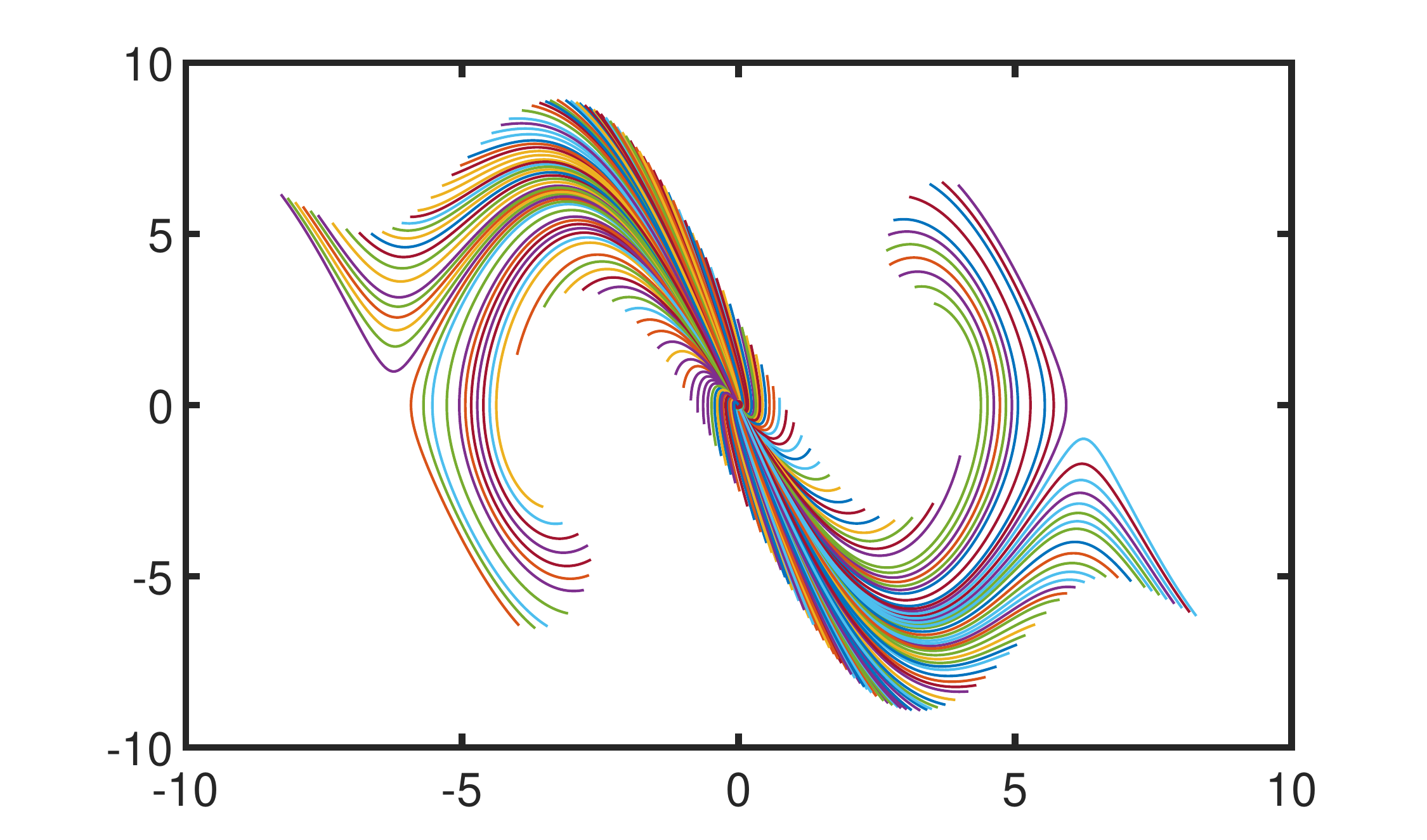}
			\end{minipage}\\
			\multicolumn{2}{c}{\smaller (a) uniformly distributed based on the perimeter of $\partial \calL$} & \multicolumn{2}{c}{\smaller (b) uniformly distributed based on $d(\cdot,\cdot)$}
		\end{tabular}
	\end{center}
	\vspace{-6mm}
	\caption{PMP trajectories resulting from two different sampling strategies. Left: based on the perimeter of the ellipse $\partial \calL$. Right: based on Algorithm~\ref{alg:nonsmooth-line}, initial states are not uniformly distributed on $\partial \calL$, but trajectories cover better the entire state space. Better viewed when zoomed in.\label{fig:sample}} 
	\vspace{-1mm}
\end{figure}

Fig.~\ref{fig:sample}(a) plots the PMP trajectories if we uniformly sample $\partial \calL$ based on its perimeter(recall $\partial \calL$ is the boundary of the LQR region as in~\eqref{eq:boundary-ellipse}). Observe that the PMP trajectories are clustered (and do not fully cover the state space) due to the LQR ellipse being highly elongated. This issue will get even more severe in high-dimensional systems because the condition number of the local LQR value function will get larger, \eg in a cart-pole system.



We present Algorithm~\ref{alg:sample} that performs sampling based on the distance metric $d(\cdot,\cdot)$ as in~\eqref{eq:distance-metric}. $N$ is the num of trajectories, $X$ is initial points on $\partial \calL$. Riccati() denotes solving the algebraic Riccati equation. Initial() denotes calculating the initial points based on $\theta$, $\theta$ is the parameter of $\partial \calL$. Levelset() denotes finding the $x^c$ defined in \eqref{eq:distance-metric}.

If the state space is 2D, $\mathcal{V} = \Real{2}$. If the space is $n$-D, we need to sample many subspaces to include in $\mathcal{V}$.


\setlength{\textfloatsep}{0pt}%
\begin{algorithm}[h]
    \SetAlgoLined
    \caption{Uniformly sample initial states based on $d(\cdot,\cdot)$\label{alg:sample}}
    \textbf{Input:} $N$,subspace set $\mathcal{V}$\\
    \textbf{Output:} $X$\\
    \For{$v$ in $\mathcal{V}$}{
        $P$ = Riccati($v$)\\
        $X$ = Initial($P$,$\{\pi,-\pi\}$)\\
        $L$ = Levelset($X$)\\
    \For{i = 1 \text{\textbf{to}} N}
    {$D = \emptyset $\\
    
    \For{j = 1 \text{\textbf{to}} i}{$D\{i\} = ||L\{i+1\}-L\{i\}||$}
    $j_c = \argmax_j D\{j\}$ \\
    $\theta_{\mathrm{new}} = \frac{\theta\{j_c\}+\theta\{j_c+1\}}{2}$\\
    $X_{\mathrm{new}} = \mathrm{Initial}($P$,\theta_{\mathrm{new}})$\\
    $L_{\mathrm{new}} = \mathrm{Levelset}(X_{\mathrm{new}})$\\
    Insert $L_{\mathrm{new}}$}
    }
\end{algorithm}

\section{Physical Explanation and Proof Related to Discontinuous Line}
\label{sec:app:discontinuous}
\subsection{The Singular Point}

Consider the point at which the maximum torque balances gravity, \ie $\thetadot = 0$ and $\theta$ satisfies
$$
mgl\sin\theta = u_\max.
$$
We call this point the equilibrium point (E-point). 

If $\thetadot=0$ and $mgl\sin\theta < u_\max$, then the pendulum can be directly driven to the upright point because we have enough torque.

If $\thetadot=0$ and $mgl\sin\theta > u_\max$, then the pendulum must swing down first to accumulate energy before swinging up to the upright position.

Therefore, the value function must be somewhat different at the E-point.


From the PMP trajectories shown in Fig.~\ref{app:fig:trajectories}, as $u_\max$ gets smaller, a singular point starts to appear. The singular point is very close to the E-point (under $10^{-3}$ error).

\subsection{Bang-bang Control and the Discontinuous Line}
\label{app:sec:bang-bang-discontinuous}
The singular point is actually on a ``discontinuous line''. Coincidentally, we observe that this line is the state trajectory of the bang-bang controller. 

At first glance, it may appear that discontinuity brings a problem in the proof in Appendix~\ref{sec:app:proof-optimality} because we need $J$ to be continuous. However, it is not strictly necessary. Recall the proof of Theorem~\ref{the:optimality}
\begin{subequations}
    \begin{align}
    \int _{0}^{T}\dfrac{dJ}{dt}dt&=\sum _{i}\int _{T_{i}}^{T_{i}+1}\dfrac{dJ}{dt}dt\\
    &=\sum _{i}J| _{T_{i}^+}^{T_{i+1}^-}\\
   &\leq J\left( x\left( T\right) \right) -J\left( x\left( 0\right) \right) \label{eq:discontinuous-new-proof}
\end{align}
\end{subequations}
   $t \rightarrow \infty$ we have 
   $$\begin{aligned}J\left( x\left( +\infty \right) \right) -J\left( x\left( 0\right) \right)& \geq\int ^{+\infty }_{0}\dfrac{dJ}{dt}dt\\
   &\geq -\int _{0}^{+\infty }c\left( x\left( t\right) ,u\left( t\right) \right) dt\end{aligned}$$
We only need \eqref{eq:discontinuous-new-proof} be true, \ie
$$
J_{T_i^-}\leq J_{T_i^+}.
$$
This can be explained as the trajectory will not jump from a high-value region to a low-value region. We give an intuitive proof.

\begin{wrapfigure}{r}{0.25\textwidth}
      \includegraphics[width = 0.25\textwidth]{./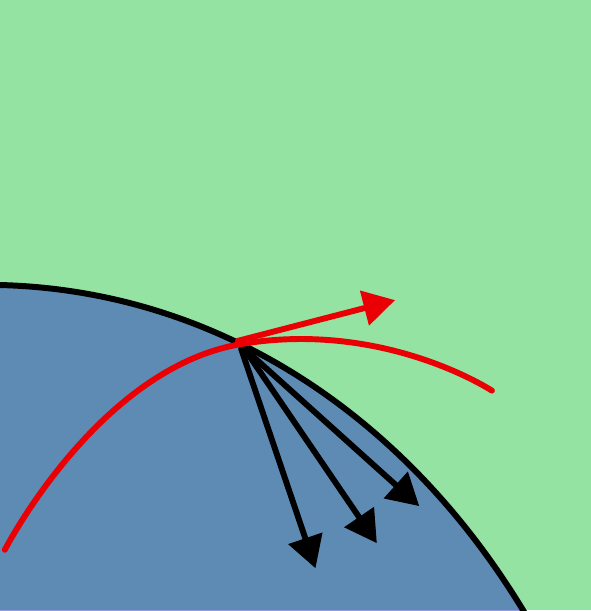}
      \vspace{-9mm}
      \caption{Trajectory\label{fig:dis-cross}}
      \vspace{-4mm}
   \end{wrapfigure}

\begin{proof}
   We only need to prove the trajectory can never cross the bang-bang control trajectory from the high-value region to the low-value region (blue and green regions in Fig.~\ref{fig:dis-cross}, respectively). The proof is straightforward: for control-constrained case, at one state $(\theta,\thetadot)$, not all the directions are admissible. Consider a special case $u_{\mathrm{max}} = 0$, \ie there is no control. Then the trajectory will only have one admissible direction. We will prove crossing the bang-bang control trajectory from the high-value region to the low-value region is not admissible.

   Without loss of generality, we consider a piece of bang-bang control trajectory in $\theta<0$, $\displaystyle\thetadot>0$ region, and the bang-bang controller is always applying $u_\max$, \ie the black line in Fig.~\ref{fig:dis-cross}.

   For bang-bang control, we have\beq
   \begin{bmatrix}
      \thetadot_b\\
      \ddot \theta_b
   \end{bmatrix}
   =
   \begin{bmatrix}
      \thetadot_b  \\
      -\frac{1}{ml^{2}} \left( b \thetadot_b - m g l \sin \theta_b - u_{\mathrm{max}} \right)
      \end{bmatrix}
   \eeq
   And for any other control, we have\beq
   \begin{bmatrix}
      \thetadot_a\\
      \ddot \theta_a
   \end{bmatrix}
   =
   \begin{bmatrix}
      \thetadot_a  \\
      -\frac{1}{ml^{2}} \left( b \thetadot_a - m g l \sin \theta_a - u_{\mathrm{a}} \right)
      \end{bmatrix}
   \eeq
   For a parametric curve $(\theta(t),\thetadot(t))$, the slope of its tangent line can be calculated as $\frac{\ddot\theta(t)}{\thetadot(t)}$. If the trajectory crosses the blue region to the green region, we will have 
   $$
   \frac{\ddot\theta_b(t)}{\thetadot_b(t)} > \displaystyle\frac{\ddot\theta_a(t)}{\thetadot_a(t)}.
   $$
   However, from $u_{\mathrm{a}} < u_{\mathrm{max}}$, $\thetadot_a(t) = \thetadot_b(t)>0$, we get $\ddot\theta_a(t) > \ddot\theta_b(t)$ and $\frac{\ddot\theta_b(t)}{\thetadot_b(t)} \leq \frac{\ddot\theta_a(t)}{\thetadot_a(t)}$, which induces contradiction.
\end{proof}
Therefore, we can still use Theorem~\ref{the:optimality} when the value function is discontinuous.

\subsection{Discount Factor}
\label{app:sec:discount-factor}

\begin{figure}[t]
	\vspace{-14mm}
	\begin{center}
		\begin{tabular}{cc}
            \hspace{-5mm}	
            \begin{minipage}{0.5\textwidth}
				\centering
				\includegraphics[width=\textwidth]{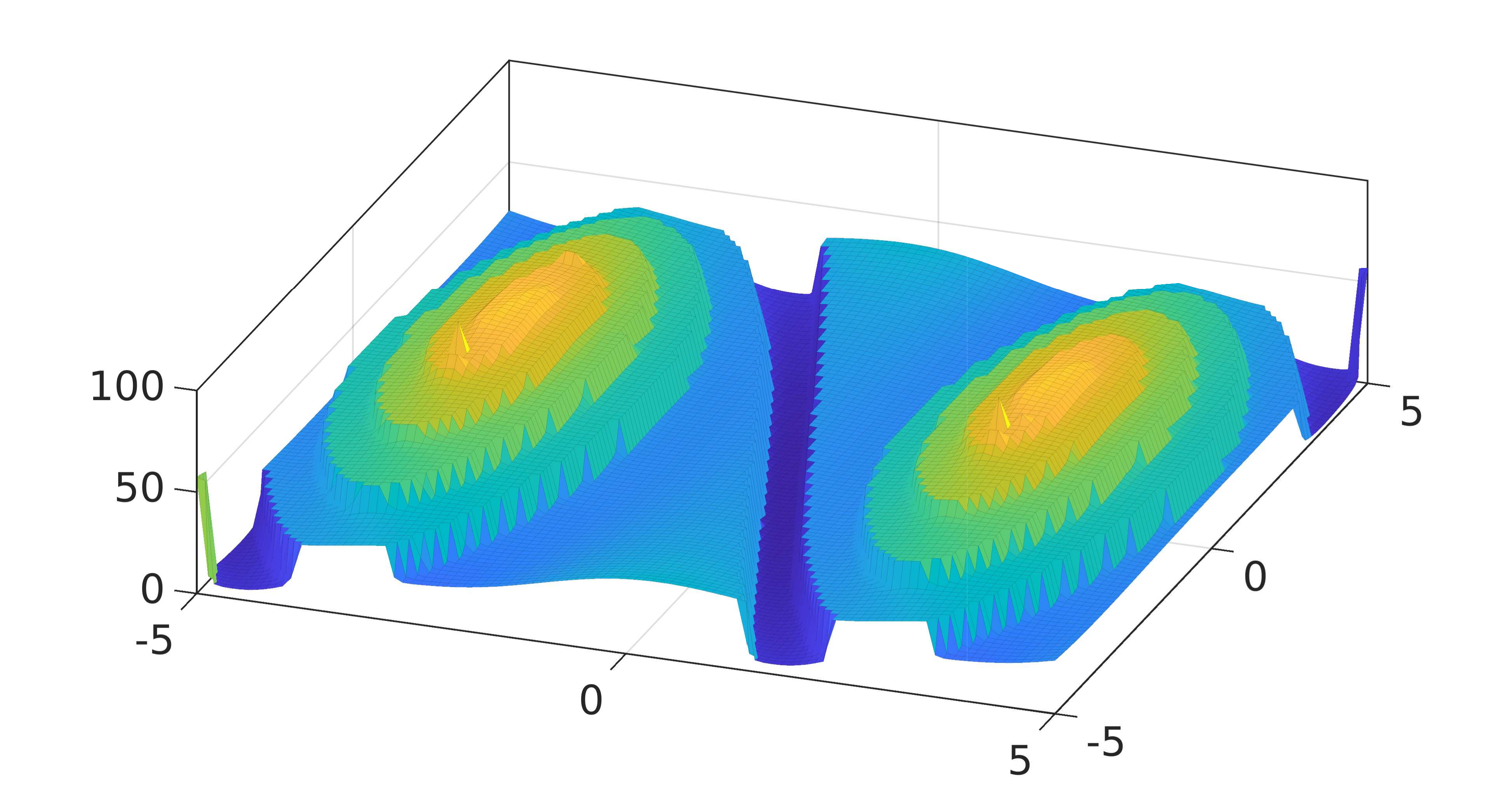}
			\end{minipage}
			&\hspace{-6mm}
			\begin{minipage}{0.5\textwidth}
				\centering
				\includegraphics[width=\textwidth]{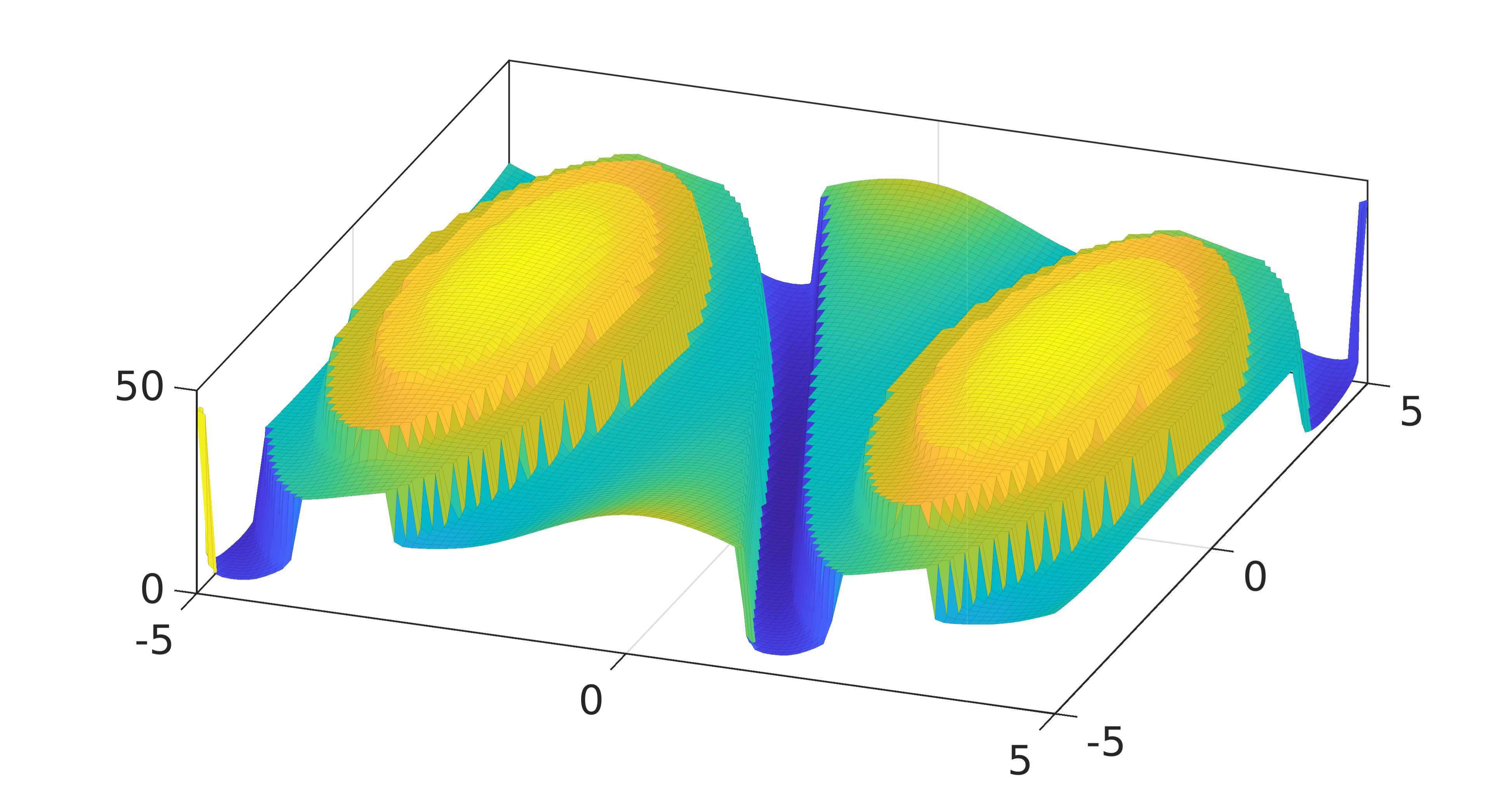}
			\end{minipage}\\
            (a) $\lambda = 0$ & (b) $\lambda = 0.15$
		\end{tabular}
		\begin{tabular}{cc}
            \hspace{-5mm}	
            \begin{minipage}{0.5\textwidth}
				\centering
				\includegraphics[width=\textwidth]{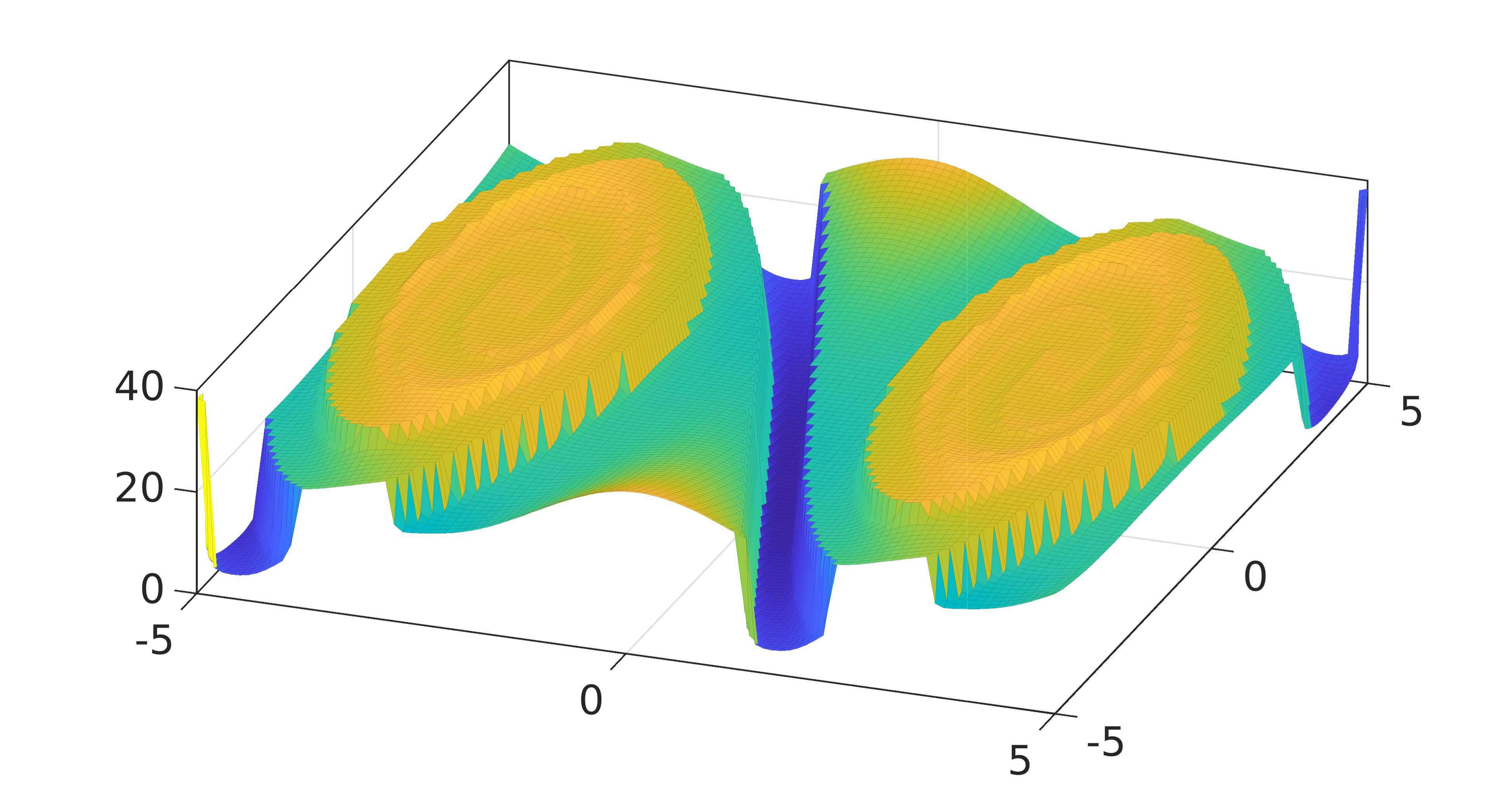}
			\end{minipage}
			&\hspace{-6mm}
			\begin{minipage}{0.5\textwidth}
				\centering
				\includegraphics[width=\textwidth]{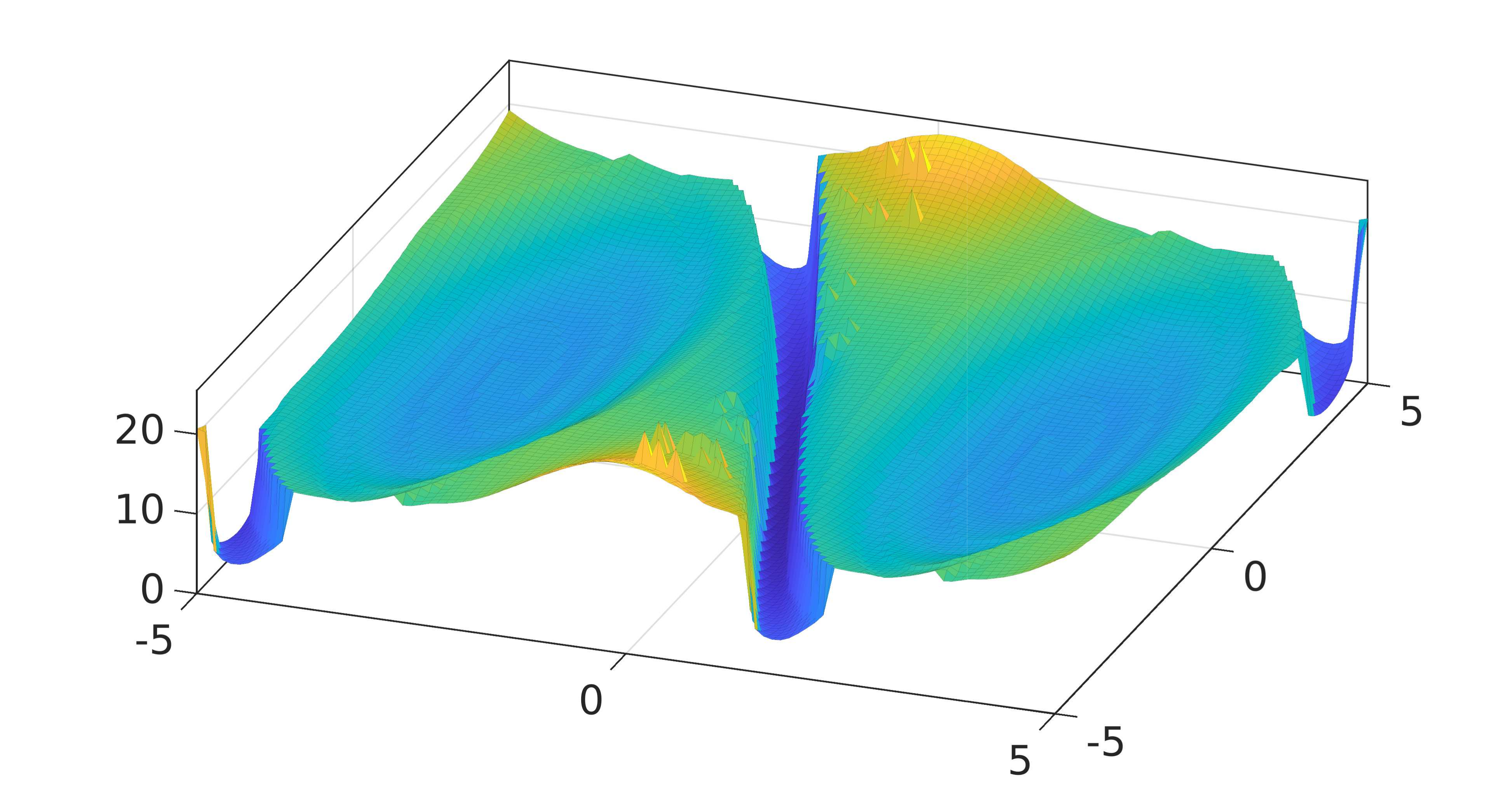}
			\end{minipage}\\
            (c) $\lambda = 0.3$ & (d) $\lambda = 1$
		\end{tabular}
	\end{center}
	\vspace{-6mm}
	\caption{Value functions for the discounted optimal control problem~\eqref{eq:discounted-optimal-control}. The value function appear to be discontinuous even with a discount factor. As the discount factor $\lambda$ grows larger, it is similar to the results from value iteration~\cite[Example 2.3]{yang23book-optimal}.\label{fig:discount}} 
	\vspace{-1mm}
\end{figure}

We attempted to prove the value function is continuous based on a classical result in~\cite[Theorem 1.5, P402]{bardi97book-optimal} that states if the optimal value function is continuous when there exists a discount factor, then the optimal value function without a discount factor is also continuous. 

Therefore, given the optimal control problem with a discount factor $\lambda > 0$
\bea\label{eq:discounted-optimal-control}
J(x_0) = \min _{u}\int ^{+\infty }_{0}e^{-\lambda t}c\left( x\left( t\right) ,u\left( t\right) \right) dt ,\quad    \dot x\left( t\right) =f\left( x\left( t\right) ,u(t)\right) ) ,\quad   x\left( 0\right) =x_{0},
\eea
we need to verify if the value function is Lipschitz continuous.
The corresponding HJB equation is 
$$
-\lambda J + \min_u{c(x,u)+\frac{\partial J}{\partial x}\tran  f(x,u}) = 0.
$$
Since PMP can be derived from the HJB equation using the method of characteristics, we still use PMP but modify it as 
\beq
\dot{p}(t) = -\lambda p-\nabla_x H(x^*(t),u^*(t),p(t)),\quad p(T) = \nabla_x J(x^*(T)).
\eeq
We obtain the discounted value functions in Fig.~\ref{fig:discount}, which appear to be still discontinuous.

\section{Extra Numerical Results}
\label{sec:app:extra-results}

\subsection{PMP Trajectories}
\label{app:sec:pmp-trajectories}
\begin{figure}[t]
	\vspace{-14mm}
	\begin{center}
		\begin{tabular}{cc}
            \hspace{-5mm}	
            \begin{minipage}{0.5\textwidth}
				\centering
				\includegraphics[width=\textwidth]{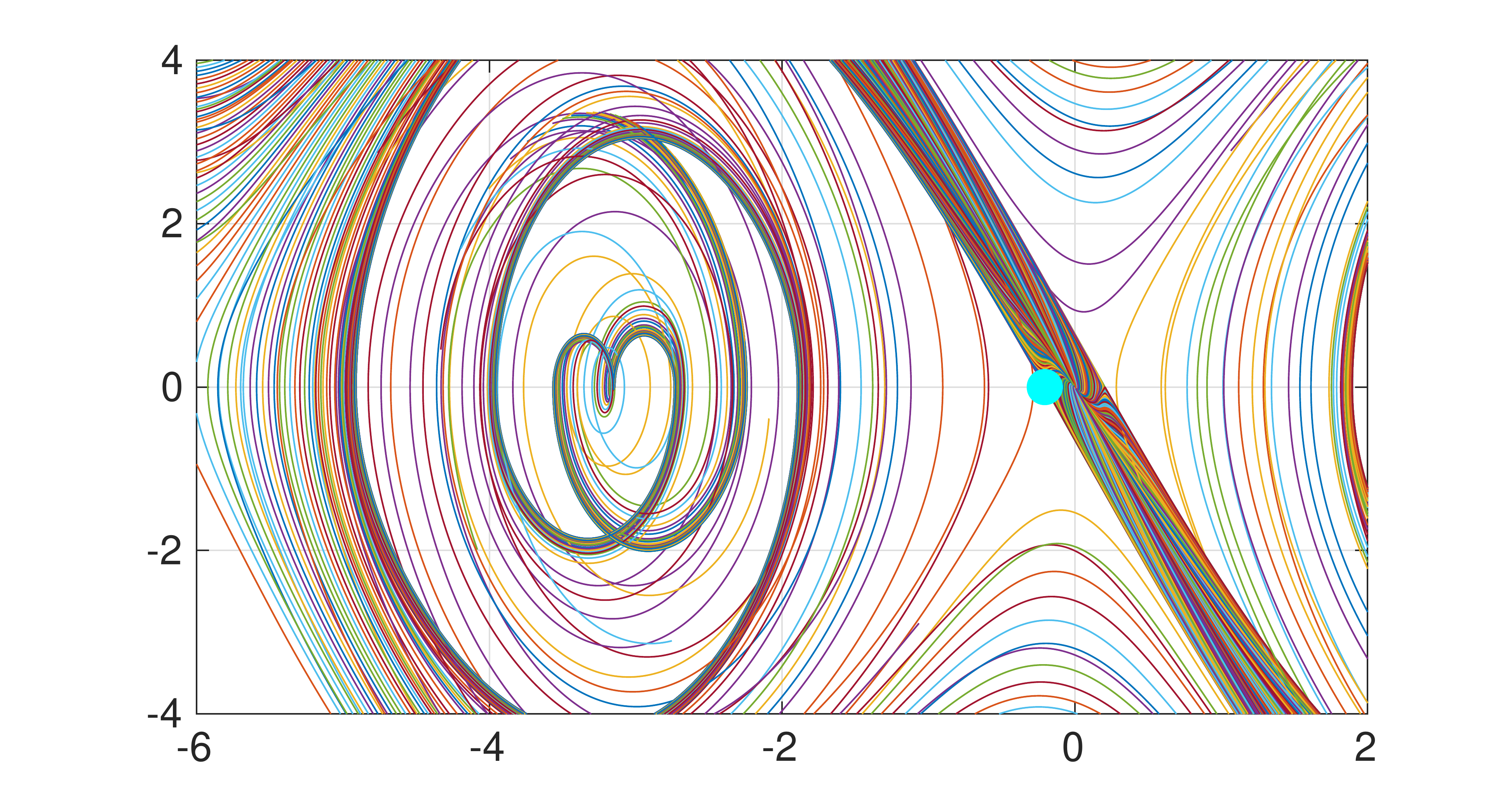}
			\end{minipage}
			&\hspace{-6mm}
			\begin{minipage}{0.5\textwidth}
				\centering
				\includegraphics[width=\textwidth]{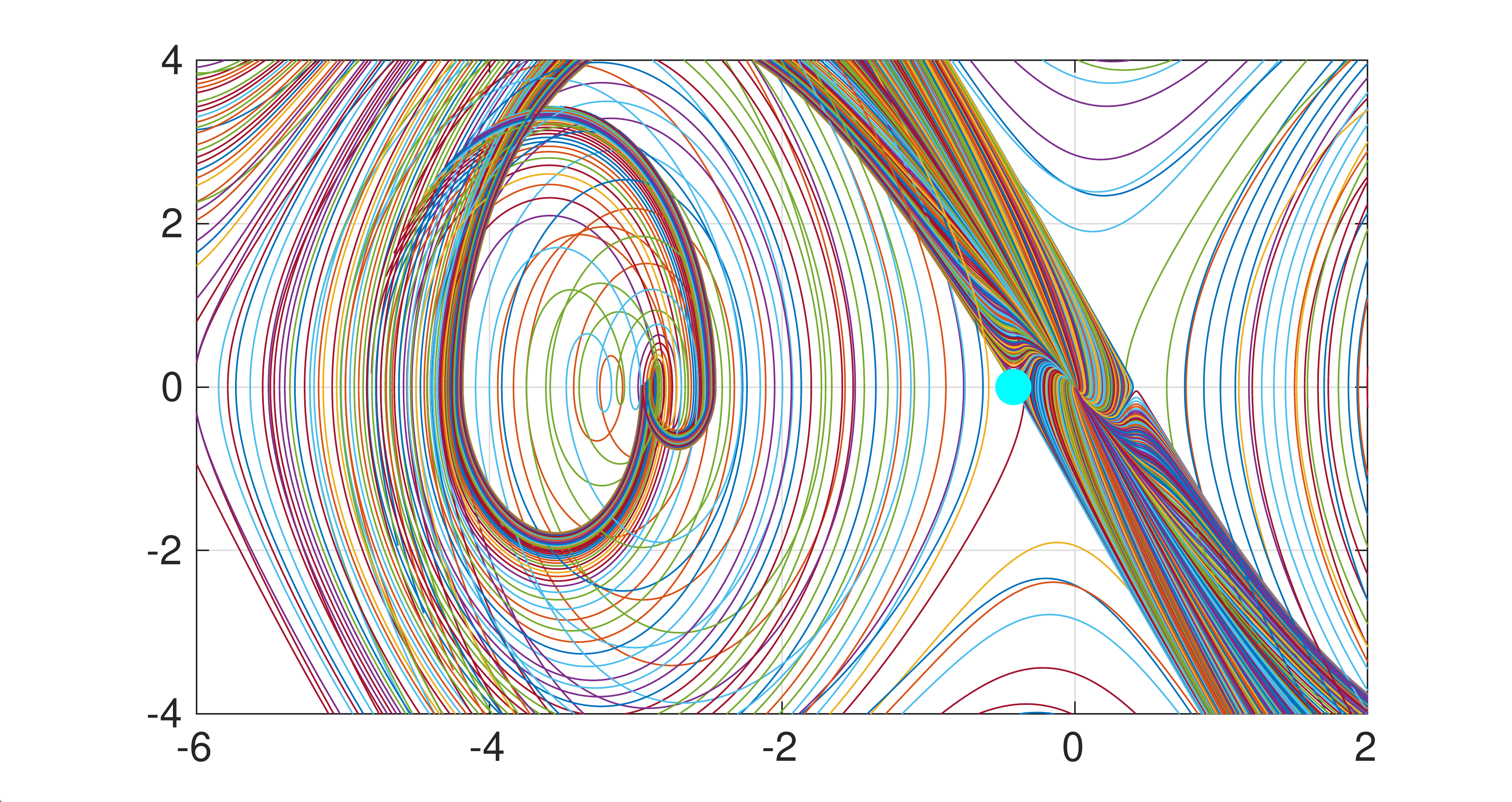}
			\end{minipage}\\
            (a) $u_{\mathrm{max}} = 2$&(b) $u_{\mathrm{max}} = 4$
		\end{tabular}
		\begin{tabular}{cc}
            \hspace{-5mm}	
            \begin{minipage}{0.5\textwidth}
				\centering
				\includegraphics[width=\textwidth]{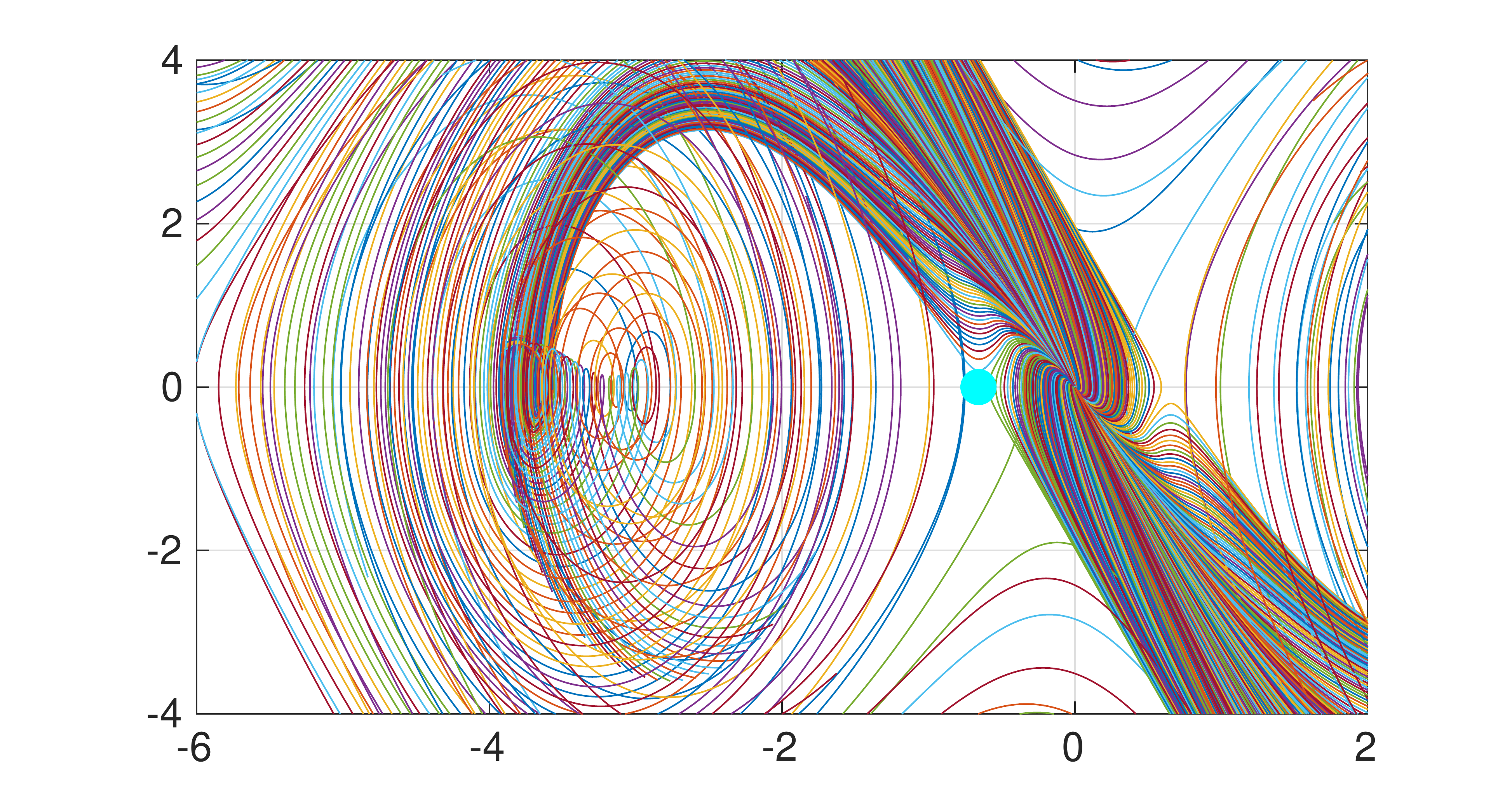}
			\end{minipage}
			&\hspace{-6mm}
			\begin{minipage}{0.5\textwidth}
				\centering
				\includegraphics[width=\textwidth]{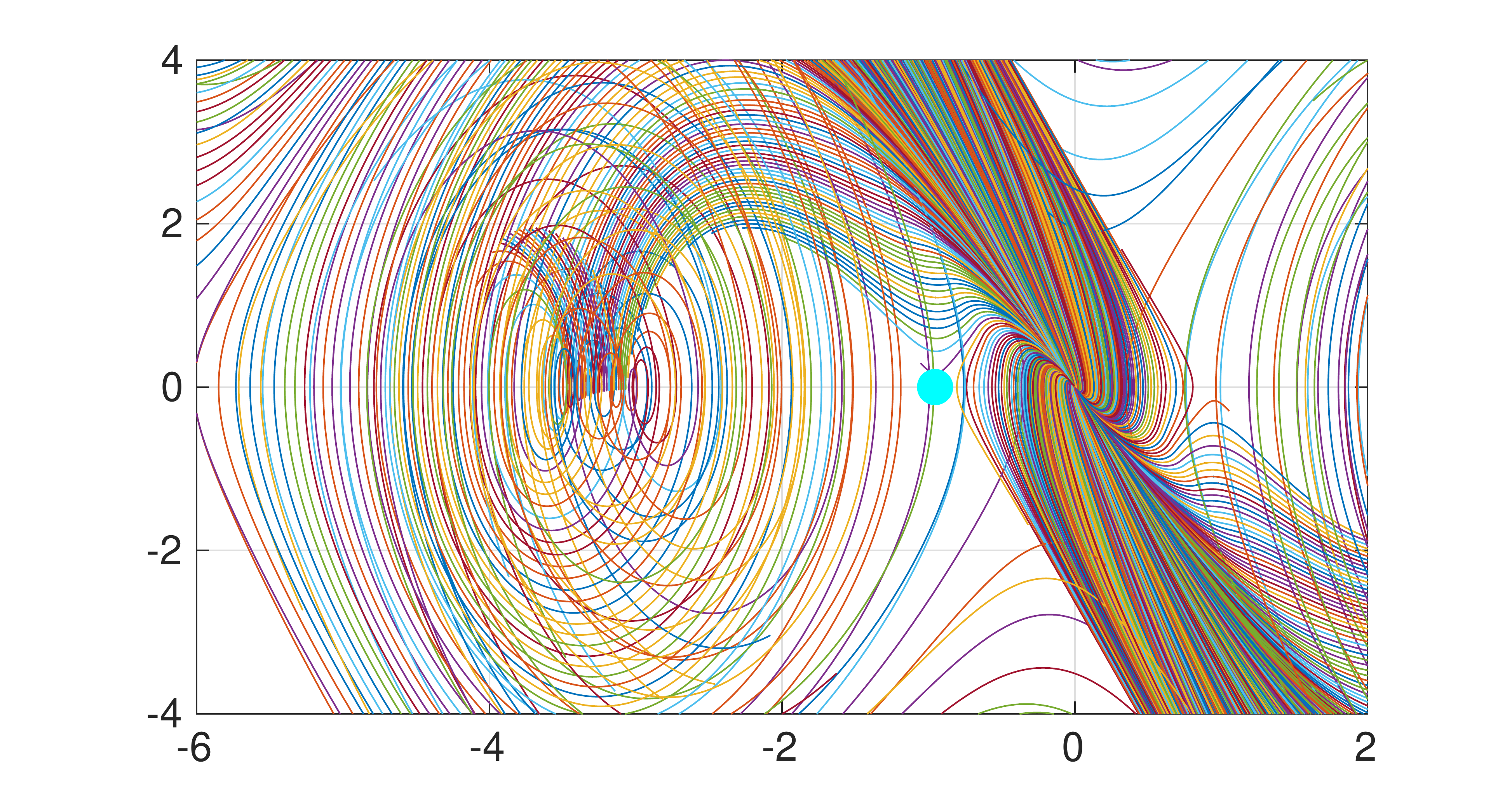}
			\end{minipage}\\
            (c) $u_{\mathrm{max}} = 6$&(d) $u_{\mathrm{max}} = 8$
		\end{tabular}
	\end{center}
	\vspace{-6mm}
	\caption{PMP trajectories with different values of $u_\max$. All the trajectories start near $(0,0)$ and span to different regions. The cyan point is the E-point $(-\arcsin\frac{u_{\mathrm{max}}}{mgl},0)$.\label{app:fig:trajectories}} 
	\vspace{-1mm}
\end{figure}

Fig.~\ref{app:fig:trajectories} plots the raw PMP trajectories. As we can see, they can intersect with each other and themselves. In fact, in the $(x,p)$ 4D state-costate space, the trajectories do not intersect. It is the projection of the trajectories from 4D to 2D that introduces the intersection.

\subsection{Contour Algorithm} 

In line~\ref{line:intersect} of Algorithm~\ref{alg:nonsmooth-line}, two contour lines may have multiple intersections. Given several initial intersection points, every time $k$ increases (the outer for loop), we choose the nearest intersection point of each initial point.
In terms of implementation, we form the contour line as a polygon and use the MATLAB intersect() function.

\subsection{HJB Residual}

\begin{figure}[t]
	\vspace{-14mm}
	\begin{center}
		\begin{tabular}{cc}
            \hspace{-6mm}	
            \begin{minipage}{0.5\textwidth}
				\centering
				\includegraphics[width=\textwidth]{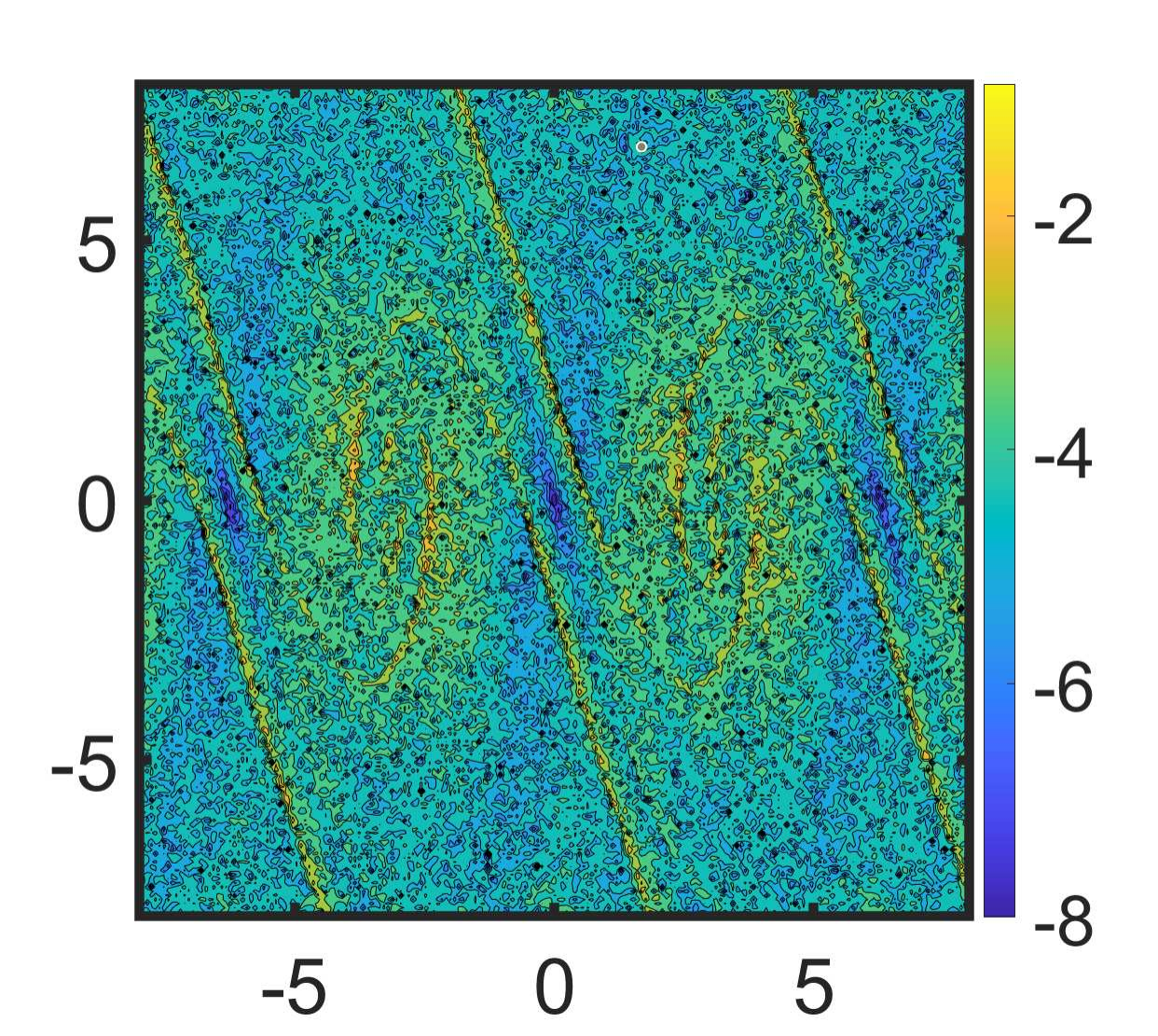}
			\end{minipage}
			&\hspace{0mm}
			\begin{minipage}{0.5\textwidth}
				\centering
				\includegraphics[width=\textwidth]{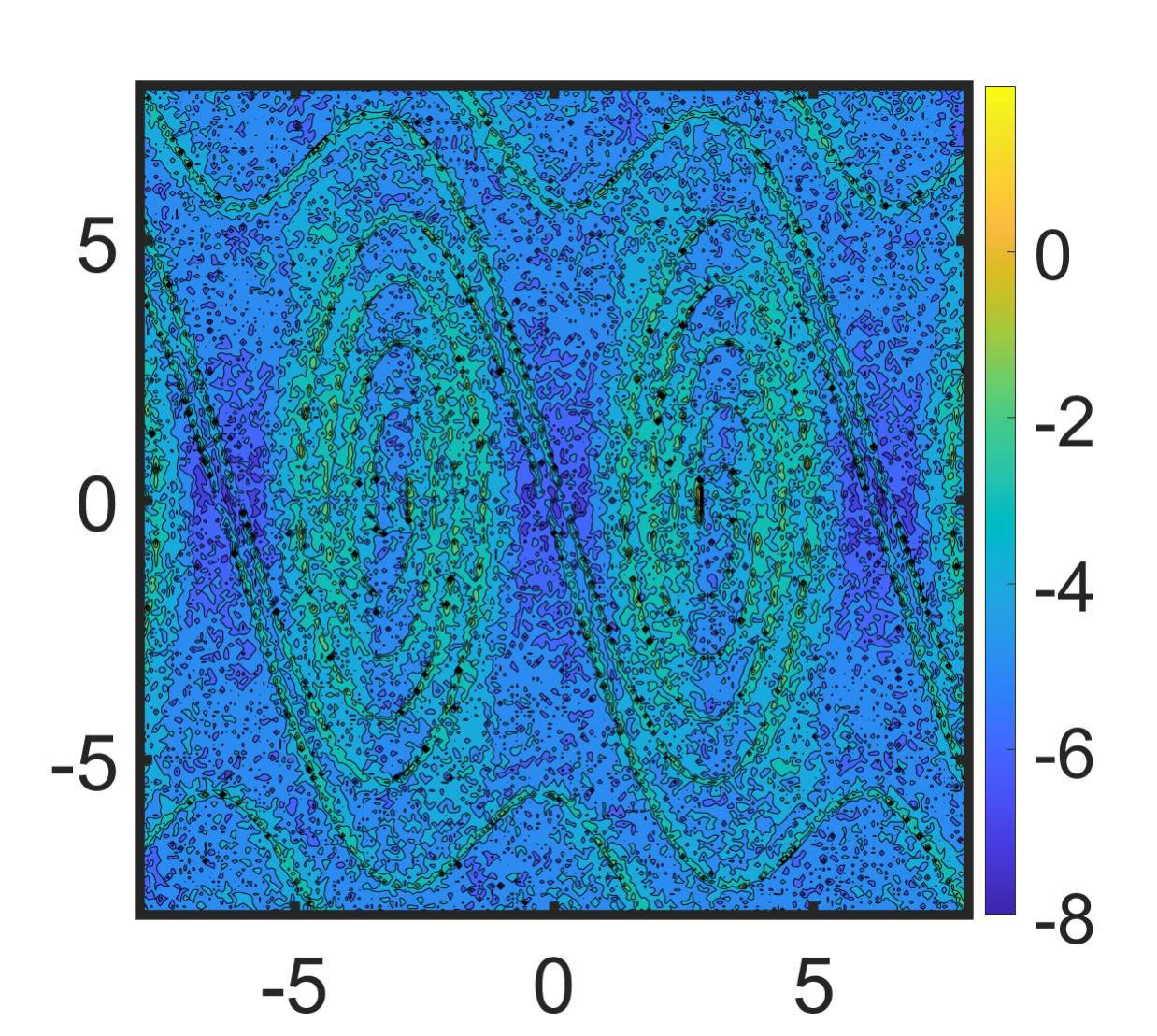}
			\end{minipage}\\
			(a) without control constraint & (b) with control constraint $u_\max = 2$
		\end{tabular}
	\end{center}
	\vspace{-4mm}
	\caption{$\log$-HJB residual.\label{fig:error}} 
	\vspace{6mm}

\end{figure}

Fig.~\ref{fig:error} shows the $\log$-HJB residual (\ie $-5$ denotes the HJB residual is $10^{-5}$). The average error is $1.3314\times 10^{-4}$ when there is no control saturation, and $1.2131\times 10^{-4}$ when $u_\max=2$. We can see the HJB residual near the nonsmooth curve is quite large.

\subsection{Neural Network Loss Functions}
\label{app:sec:nn-loss-functions}
Details about the neural network loss function in~\eqref{eq:nn-loss} are provided as follows
$$
\ell_{\lqr} = \frac{\displaystyle\sum^{N_\lqr}_{x_i \in \calL,i=1} (\Jnn(x_i) - J_{\infty}(x_i))^2}{N_{\lqr}}
$$
$$
\ell_{\Jvalue} = \frac{\displaystyle\sum_{i = 1}^{N_{\Jvalue}} (\Jnn(x_i) - J^*(x_i))^2}{N_{\Jvalue}}
$$
$$
\ell_{\hjb} = \frac{\displaystyle\sum_{i = 1}^{N_{\hjb}} (\mathrm{ReLU}(\min_{u \in \controlset}{c(x_i,u)+\displaystyle\frac{\partial \Jnn}{\partial x_i}^Tf(x_i,u)}))^2}{N_{\hjb}}
$$
$$
\ell_{\smooth} = \frac{\displaystyle\sum_{i = 1}^{N_{\smooth}} \left\Vert \displaystyle\frac{\partial \Jnn}{\partial x}(x_i) \right\Vert^2}{N_{\smooth}}.
$$
Here we choose $\lambda_{\lqr} = \lambda_{\Jvalue} = 1$, $\lambda_{\hjb} = \lambda_{\pmp} = 0.3$, $\lambda_{\smooth} = 0.002$. $N_{\lqr}= 100$, $N_{\hjb}$ and $N_{\smooth}$ are 10000 ($100 \times 100$ meshgrid data span from $[-7,7]\times[-7,7]$). We have  shown different $N_{\Jvalue}$ in Fig.~\ref{fig:neural}. The neural network structure is a 2-hidden layer 300 dimension Multi Layer Perceptron (MLP), with input $(\sin(\theta),\cos(\theta),\thetadot)$ and output $J$.

\subsection{Generalization to Cart-Pole}
\label{app:sec:cart-pole}
Consider the cart-pole dynamics shown in Fig.~\ref{fig:cart-pole}
\bea\label{eq:cartpole-dynamics}f(x(t),u(t)) = \begin{bmatrix}
\displaystyle\frac{1}{m_c+m_p\sin^2\theta}[u+m_p\sin\theta(l{\dot\theta}^2-g\cos\theta)]\\
\dot \theta  \nonumber \\
\displaystyle\frac{1}{l(m_c+m_p\sin^2\theta)}[-u\cos\theta-m_pl{\dot\theta}^2\sin\theta\cos\theta+(m_c+m_p)g\sin\theta]
\end{bmatrix},
\eea
where the state is $x = [\dot{s}, \theta, \thetadot]\tran$ with $s$ denoting the position of the cart. We consider $m_p = 0.2$ and $m_c = 1$ as the mass of the pendulum and the cart, respectively, $l = 1$ as the length of the pole and $g = 9.8$ as the gravity constant.
We consider the case without control saturation $u \in \Real{}$. 

The running cost is 
\bea\label{eq:cost-function-cartpole}
c( x, u ) = q_1 \sin^2 \theta + q_1 (\cos \theta - 1 )^2 + q_2 \thetadot^2 + q_3 \dot{s}^2 + ru^2,
\eea
\begin{wrapfigure}{r}{0.3\textwidth}
	\includegraphics[width = 0.3\textwidth]{./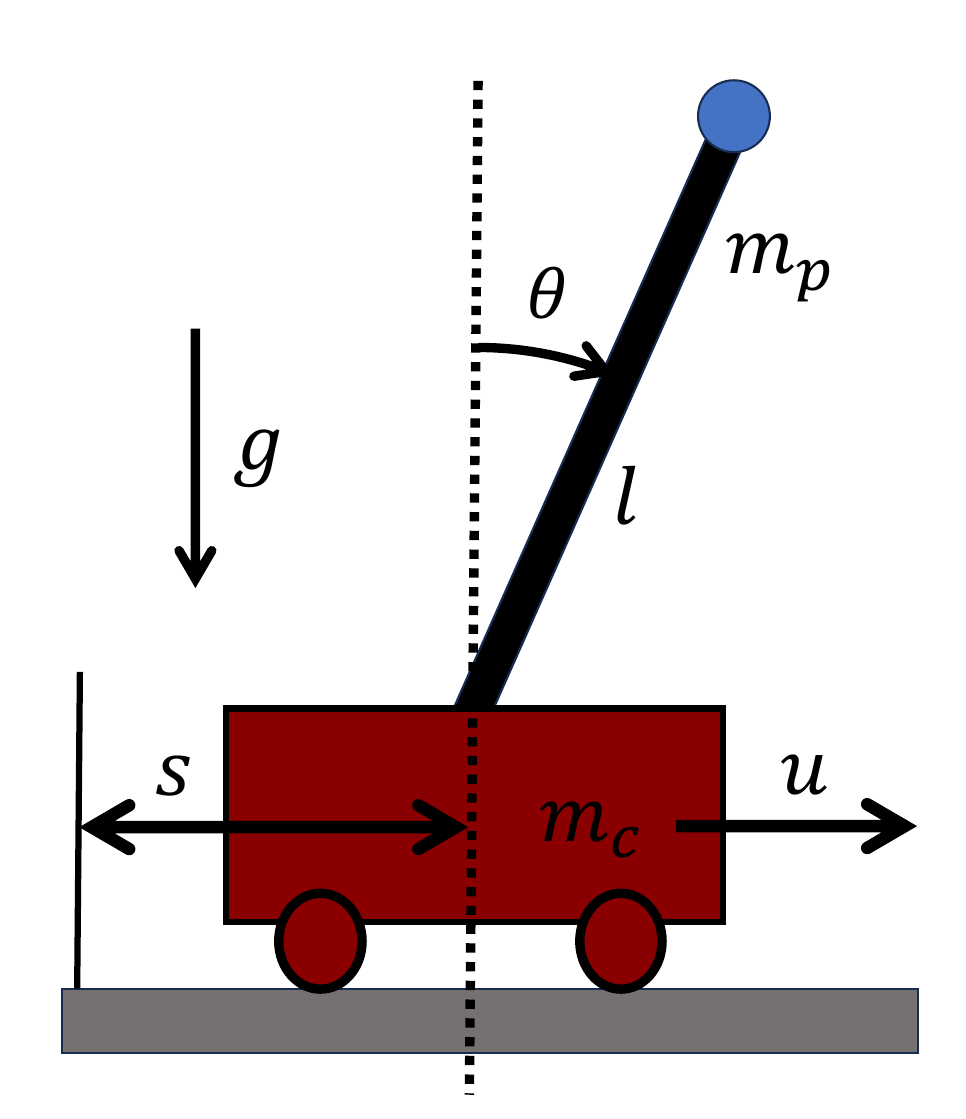}
	\vspace{-9mm}
	\caption{Cart-pole.\label{fig:cart-pole}}
	\vspace{-4mm}
\end{wrapfigure} 
with $q_1 = 1, q_2 = 1, q_3 = 1, r = 1$. We are interested in the region $x \in [-5,5]\times[-8,8]\times[-5,5]$  and the input to $\Jnn$ is $(\sin\theta, \cos\theta, \thetadot, \dot{s})$. We use a neural network with 2 hidden layers each
with 200 neurons, and the last layer is a RELU() layer in order to make the network value always positive.

We first use the approach introduced in Section~\ref{sec:approach} to generate 300 2-D subspaces, each with 1000 raw PMP trajectories. Then we follow the weak supervision training approach introduced in Section~\ref{sec:neural-approximation} to train a neural value function. Fig.~\ref{fig:cartpole} plots the neural value function and the state trajectories using the control induced by the neural value function.

\begin{figure}[t]
	\vspace{-14mm}
	\begin{center}
		\begin{tabular}{cc}
            \hspace{-6mm}	
            \begin{minipage}{0.4\textwidth}
				\centering
				\includegraphics[width=\textwidth]{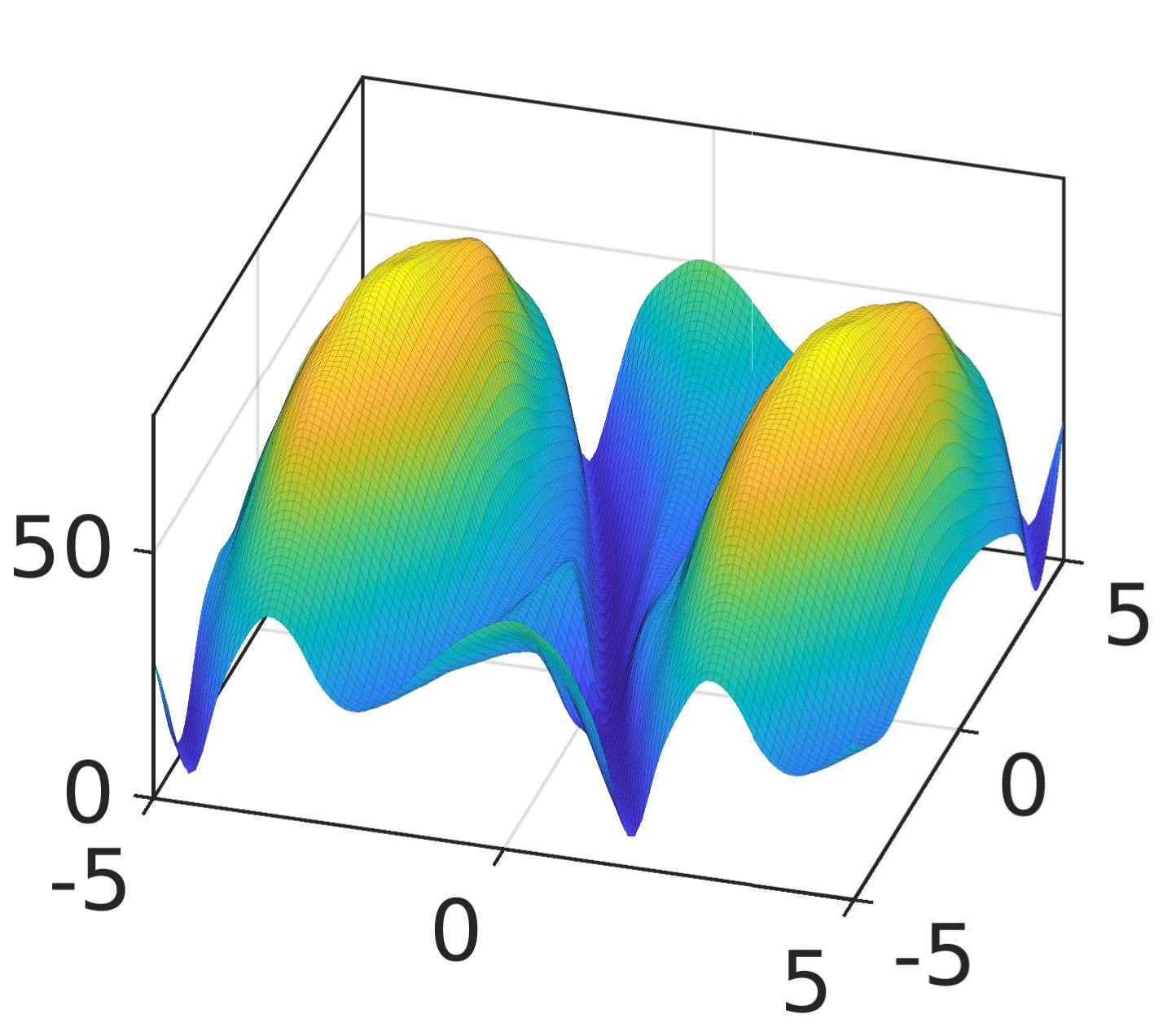}
			\end{minipage}
			&\hspace{0mm}
			\begin{minipage}{0.6\textwidth}
				\centering
				\includegraphics[width=\textwidth]{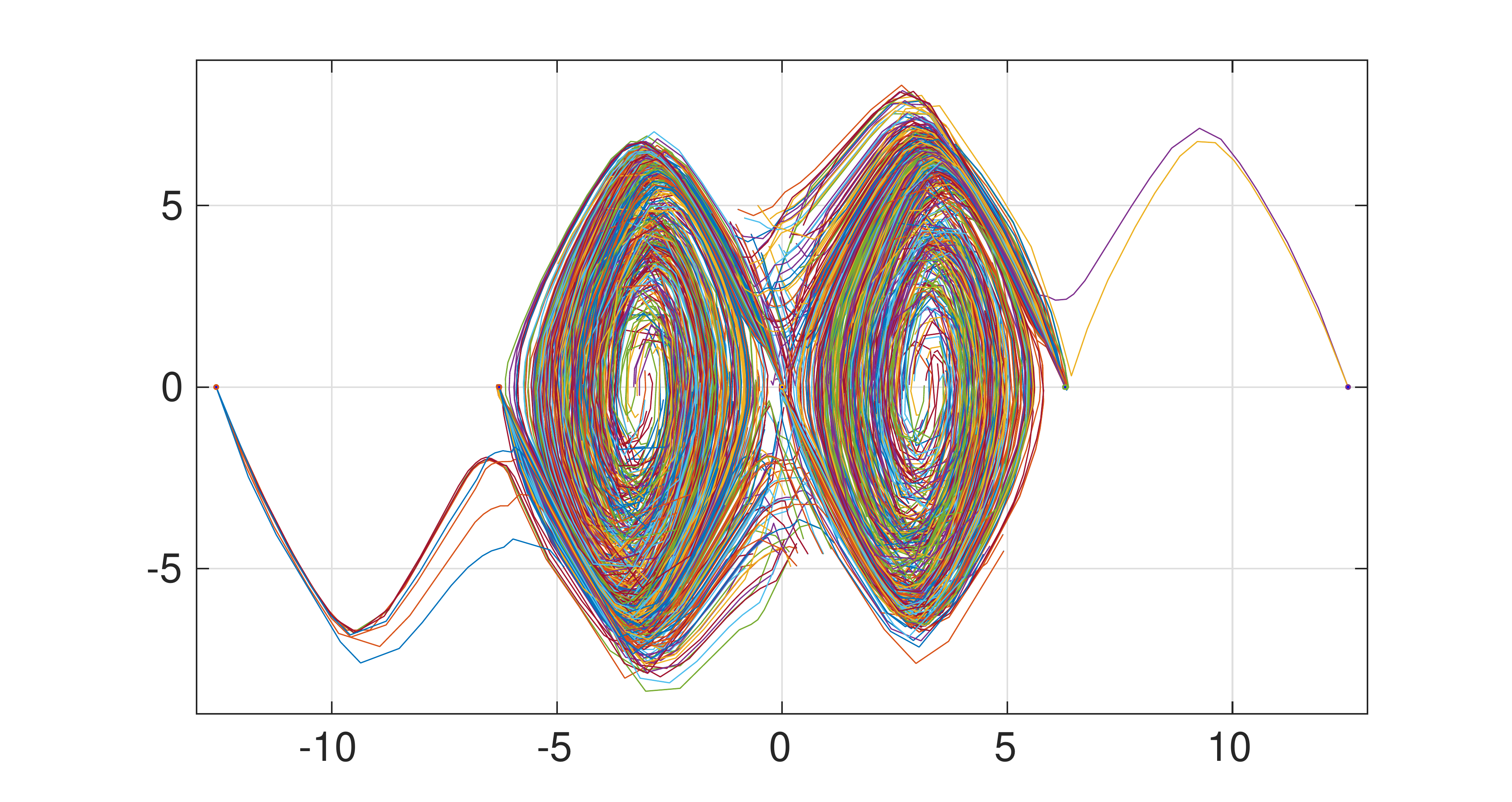}
			\end{minipage}\\
			(a) Neural value function & (b) State trajectories
		\end{tabular}
	\end{center}
	\vspace{-6mm}
	\caption{Neural value function and induced state trajectories for the cart-pole system. Both of them are in $\theta$ and $\thetadot$ coordinates. The value function is on the plane $\dot{s} = 0$. The 900 initial points are random samples in $[-5,5]\times[-5,5]\times[-5,5]$. \label{fig:cartpole}} 
	\vspace{2mm}
\end{figure}

\bibliography{reference}

\end{document}